\newcommand{\nc}{\newcommand}
\newcolumntype{P}[1]{>{\centering\arraybackslash}p{#1}}
\nc{\mc}{\mathcal}
\nc{\mb}{\mathbb}
\nc{\mf}{\mathfrak}
\nc{\ul}{\underline}
\nc{\ol}{\overline}
\nc{\dmo}{\DeclareMathOperator}
\nc{\R}{\mb R}
\dmo{\Spin}{Spin}
\dmo{\SO}{SO}
 \dmo{\pr}{pr}
 \dmo{\Sym}{Sym}
\dmo{\U}{U}
\dmo{\Hom}{Hom}
\dmo{\PGL}{PGL}
\dmo{\PSL}{PSL}
\dmo{\ortho}{orth}
\dmo{\sgn}{sgn}
\dmo{\dyn}{dyn}
\dmo{\trace}{Trace}
\dmo{\new}{new}
\dmo{\Ad}{Ad}
\dmo{\sym}{sym}
\dmo{\pal}{pal}
\dmo{\sd}{sd}
\dmo{\Ch}{Ch}
\dmo{\Span}{Span}
\dmo{\ord}{ord}
\dmo{\Or}{O}
\dmo{\core}{core}
\dmo{\quo}{quo}
\dmo{\Pin}{Pin}
\dmo{\Od}{Od}
\dmo{\EG}{EG}
\dmo{\BG}{BG}
\dmo{\ESO}{ESO}
\dmo{\BSO}{BSO}
\dmo{\BH}{BH}
\dmo{\EH}{EH}
\dmo{\Sgn}{Sgn}
\dmo{\irr}{Irr}
\dmo{\orb}{Orb}
\dmo{\rep}{Rep}
\dmo{\orep}{ORep}
\dmo{\odd}{odd}
\dmo{\dett}{det}
\dmo{\St}{St}
\dmo{\diag}{diag}
\dmo{\PP}{P}
\dmo{\hc}{H}
\newcommand{\as}{\alpha}
\newcommand{\f}{\mathbb{F}}
\newcommand{\z}{\mathbb{Z}}
\newcommand{\rr}{\mathbb{R}}
\newcommand{\cc}{\mathbb{C}}
\nc{\vt}{\vartheta}
\newcommand{\fe}{\mathfrak{e}}
\newcommand{\s}{\mathcal{S}}
\dmo{\rank}{rank}
\newcommand{\w}{\mathcal{W}}
 \newtheorem{thm}{Theorem}[section]
\newtheorem{c.intro}[thm]{Corollary}
\newtheorem{lemma}[thm]{Lemma}
\newtheorem{prop}[thm]{Proposition}
\newtheorem{cor}{Corollary}[thm]
\theoremstyle{definition}
\newtheorem{ex}[thm]{Example}
\theoremstyle{definition}
\theoremstyle{definition}
\dmo{\Mod}{mod}
\dmo{\res}{res}
 \dmo{\Sq}{Sq}
  \dmo{\Tr}{Tr}
 \dmo{\RO}{RO}
\dmo{\Sp}{Sp}
\dmo{\SL}{SL}
\dmo{\GL}{GL}
\dmo{\GSp}{GSp}
\nc{\la}{\lambda}
\nc{\eps}{\varepsilon}
\nc{\lip}{\langle}
 \nc{\rip}{\rangle}
\nc{\gm}{\gamma}
\nc{\beq}{\begin{equation*}}
\nc{\eeq}{\end{equation*}}
\dmo{\Perm}{Perm}
\dmo{\Res}{Res}
\dmo{\Ind}{Ind}
\dmo{\ind}{ind}
\dmo{\tr}{tr}
\dmo{\reg}{reg}
\dmo{\End}{End}
\dmo{\SW}{SW}
\dmo{\Syl}{Syl}
\title[Stiefel-Whitney Classes]{Stiefel-Whitney Classes for  Finite Symplectic  Groups }
\author{Neha Malik}
\author{Steven Spallone}
\address{Neha Malik, Chennai Mathematical Institute, Siruseri-603103, Tamil Nadu, India}
\email{51nehamalik94@gmail.com}
\address{Steven Spallone, Indian Institute of Science Education and Research, Pune-411008, Maharashtra, India}
\email{sspallone@gmail.com}
\keywords{Stiefel-Whitney classes, Symplectic Groups, Finite groups of Lie type, Weil Representations}
\subjclass{Primary 20G40, Secondary 55R40}
\begin{document}
\maketitle

\begin{abstract}
Let $q$ be an odd prime power, and $G=\Sp(2n,q)$ the finite symplectic group. 
We give an expression for the total Stiefel-Whitney Classes (SWCs) for orthogonal representations $\pi$ of $G$, in terms of character values of $\pi$ at elements of order $2$. We give ``universal formulas'' for the fourth and eighth SWCs. For $n=2$, we compute the subring of the mod $2$ cohomology generated by the SWCs $w_k(\pi)$.
\end{abstract}
\tableofcontents
\section{Introduction}
 
 Stiefel-Whitney Classes (SWCs) are interesting natural cohomological invariants of orthogonal representations. In this paper we present a formula for SWCs for the finite symplectic groups in odd characteristic.
 
This paper is part of a project to understand SWCs for finite groups of Lie type.   Let $q$ be an odd prime power throughout. A formula determining SWCs for $G=\GL(n,q)$ was discovered in \cite{GJgln},    for  $G=\SL(2,q)$ in  \cite{NSSL2} and for $G=\SL(2n+1,q)$ in \cite{MSSLn}. In this paper we find similar expressions when $G=\Sp(2n,q)$.
 
 Write $\hc^*(G)$ for the mod $2$ cohomology $\hc^*(G,\mb F_2)$, and $\hc^*_{\SW}(G)$ for the subalgebra of $\hc^*(G)$  generated by SWCs of orthogonal representations.  The   diagonal matrices in $G$ with eigenvalues $\pm1$ form a subgroup we denote by $Z_X$.  (It is the center of another subgroup $X$ which we will encounter later.)  The mod $2$ cohomology of $Z_X$ is a polynomial algebra:
$$\hc^*(Z_X)\cong \mathbb F_2[v_1,\hdots,v_n],$$
 where each $v_i$ is the first SWC of a certain linear character of $Z_X$; in particular it has degree 1. 
Let $\fe_i=v_i^4$, and let $\mc E_k$ be the $k$th elementary symmetric polynomial in the $\fe_i$.
 
 \begin{thm}\label{dtsym}
The restriction map $\hc^*_{\SW}(G) \to \hc^*(Z_X)$ is injective, and its image is contained in $\mathbb F_2[\mc E_1, \ldots, \mc E_n]$.
\end{thm}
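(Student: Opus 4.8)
\emph{Overview.} I would prove the two assertions separately; both rest on the observation that every elementary abelian $2$-subgroup of $G=\Sp(2n,q)$ is $G$-conjugate into $Z_X$. To see this, let $E\le G$ be such a subgroup. Since $q$ is odd, $\mb F_q[E]$ splits completely, so $V=\mb F_q^{2n}$ decomposes as $V=\bigoplus_\chi V_\chi$ over the characters $\chi\colon E\to\{\pm1\}$. For $\chi\ne\chi'$ pick $g\in E$ with $\chi(g)=-\chi'(g)$; then $\langle v,w\rangle=\langle gv,gw\rangle=-\langle v,w\rangle$ for $v\in V_\chi$, $w\in V_{\chi'}$, so $V_\chi\perp V_{\chi'}$. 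Hence each $V_\chi$ is a non-degenerate symplectic subspace, of even dimension. Concatenating symplectic bases of the $V_\chi$ and reordering produces a standard symplectic basis of $V$ in which every element of $E$ is diagonal with entries $\pm1$; thus $E$ is conjugate into $Z_X$ by an element of $G$.

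\emph{Containment of the image.} For an orthogonal representation $\pi$ of $G$ we have $\res_{Z_X}w(\pi)=w(\pi|_{Z_X})=\prod_\chi(1+\ell_\chi)^{m_\chi}$, where $\pi|_{Z_X}=\bigoplus_\chi m_\chi\,\chi$ and $\ell_\chi\in\hc^1(Z_X)$ is the first SWC of $\chi$; identifying a character with a subset $S\subseteq\{1,\dots,n\}$ one has $\ell_{\chi_S}=\sum_{i\in S}v_i$. I would establish two facts about the multiplicities: (i) $m_{\chi_S}$ depends only on $|S|$, since $N_G(Z_X)$ contains the symplectic monomial matrices permuting the hyperbolic planes (realizing $S_n\le\Aut Z_X$) and $\pi|_{Z_X}$ is $N_G(Z_X)$-stable; and (ii) $4\mid m_{\chi_S}$ for $S\ne\varnothing$, which follows by restricting the $\chi_S$-isotypic subspace — a module over $C_G(Z_X)=\Sp(2,q)^n=\SL(2,q)^n$ — to the $i_0$-th factor for some $i_0\in S$: there the central $-I$ acts by $-1$, and by the case $n=1$ \cite{NSSL2} a real $\SL(2,q)$-module with that property has dimension divisible by $4$, this dimension being $m_{\chi_S}$. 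Granting (i) and (ii), and using $(1+\ell)^4=1+\ell^4$ and $\ell_{\chi_S}^4=\sum_{i\in S}v_i^4=\sum_{i\in S}\fe_i$ over $\mb F_2$ (the trivial character contributing the factor $1$), one gets
\[
\res_{Z_X}w(\pi)=\prod_{j=1}^{n}\Bigl(\prod_{|S|=j}\bigl(1+\textstyle\sum_{i\in S}\fe_i\bigr)\Bigr)^{\mu_j/4},\qquad \mu_j:=m_{\chi_S}\ (|S|=j),
\]
which is symmetric in $\fe_1,\dots,\fe_n$, hence lies in $\mb F_2[\fe_1,\dots,\fe_n]^{S_n}=\mb F_2[\mc E_1,\dots,\mc E_n]$ by the fundamental theorem of symmetric functions. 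Since $\hc^*_{\SW}(G)$ is generated by the homogeneous components $w_k(\pi)$ and $\mb F_2[\mc E_1,\dots,\mc E_n]$ is a graded subalgebra of $\hc^*(Z_X)$, the image of $\res_{Z_X}$ is contained in it.

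\emph{Injectivity.} By the first paragraph, for every elementary abelian $2$-subgroup $E$ the restriction $\res^G_E$ factors (up to conjugacy) through $\res_{Z_X}$, so $\ker(\res_{Z_X})$ is contained in the kernel of $\hc^*(G)\to\prod_E\hc^*(E)$, which by Quillen's theorem consists of nilpotent classes. It therefore suffices to show $\hc^*_{\SW}(G)$ is reduced, and for this I would use the main SWC formula: it expresses $w(\pi)$ in terms of character values of $\pi$ at elements of order $2$, all conjugate into $Z_X$, and (via the universal formulas) exhibits classes $\mc E^{G}_1,\dots,\mc E^{G}_n\in\hc^*_{\SW}(G)$ restricting to $\mc E_1,\dots,\mc E_n$ and generating $\hc^*_{\SW}(G)$. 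Because $\mc E_1,\dots,\mc E_n$ are algebraically independent in $\hc^*(Z_X)$, the assignment $\mc E_k\mapsto\mc E^{G}_k$ defines a ring homomorphism $\mb F_2[\mc E_1,\dots,\mc E_n]\to\hc^*(G)$ that is a one-sided inverse of $\res_{Z_X}$ on $\hc^*_{\SW}(G)$; hence the latter is reduced and $\res_{Z_X}$ is injective on it.

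\emph{Main obstacle.} I expect the delicate point to be this last step — distilling from the explicit character-theoretic formula a generating set of $\hc^*_{\SW}(G)$ whose $Z_X$-restrictions are algebraically independent (equivalently, proving $\hc^*_{\SW}(G)$ has \emph{no} nonzero nilpotents, not merely that $\ker(\res_{Z_X})$ does). This requires the precise shape of the fourth and eighth SWCs and good control of the decomposable contributions to higher SWCs.
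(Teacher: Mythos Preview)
Your containment argument is correct and in fact anticipates the paper's Theorem~\ref{SpnSWCs} and Lemma~\ref{ORQn} (with a single $\SL(2,q)$ factor playing the role the paper assigns to the quaternion subgroup $Q^n$). The injectivity argument, however, has a genuine gap that your proposed fix does not close. Granting Quillen, you must show $\hc^*_{\SW}(G)$ is reduced, and for this you want classes $\mc E_k^G\in\hc^*_{\SW}(G)$ restricting to $\mc E_k$ and \emph{generating} $\hc^*_{\SW}(G)$, so that the section $s\colon\mc E_k\mapsto\mc E_k^G$ is a ring isomorphism. But the ``main SWC formula'' and the ``universal formulas'' only ever compute $\res_{Z_X}w(\pi)$; they say nothing about $w(\pi)$ modulo $\ker(\res_{Z_X})$. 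So even if you produced the $\mc E_k^G$, all you would know is that $\res_{Z_X}\circ s=\mathrm{id}$, whence $\hc^*_{\SW}(G)=\mathrm{im}(s)\oplus\ker(\res_{Z_X})$ as graded vector spaces, with the second summand nilpotent but not obviously zero. The generation claim is exactly what is missing, and the tools you invoke cannot supply it; indeed the paper never asserts $\hc^*_{\SW}(G)=\mb F_2[\mc E_1,\dots,\mc E_n]$ for $n>2$. Your ``main obstacle'' paragraph correctly locates the difficulty, but the route you sketch is circular.

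The paper avoids this by inserting the intermediate subgroup $X\cong\mc S^n$: by a result of Adem--Milgram (Lemma~\ref{spsl2}), $X$ already detects \emph{all} of $\hc^*(G)$, not just SW classes, so there is no nilpotent ambiguity to resolve. The work then shifts to proving $\hc^*_{\SW}(\mc S^n)=\mb F_2[\fe_1,\dots,\fe_n]$ (Theorem~\ref{lc}), which is done via Wu's formula and a Steenrod-square computation for $\mc S\times\mc S$ (forcing $w_k=0$ unless $4\mid k$), the $n=1$ case from \cite{NSSL2}, the classification of OIRs of a direct product, and the tensor formula for characteristic classes (Corollary~\ref{tengen}). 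Since $\fe_i\mapsto v_i^4$ embeds $\mb F_2[\fe_1,\dots,\fe_n]$ into $\hc^*(Z_X)$, injectivity of $\res_{Z_X}$ on $\hc^*_{\SW}(G)$ follows; containment in $\mb F_2[\mc E_1,\dots,\mc E_n]$ is then automatic from $S_n$-invariance of the image.
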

 
Since no information is lost by restriction, we will express SWCs in terms of the $\mc E_k$. For $0 \leq i \leq n$, let $g_i \in Z_X$ with $-1$ having multiplicity $2i$ as an eigenvalue, and $1$ having multiplicity $2(n-i)$.  For example $g_0=\mathbbm1$, the identity matrix. Given a representation $\pi$, write $\chi_\pi$ for its character. Here is our ``universal'' formula for the 4th and 8th SWCs:
 
\begin{thm} \label{univ.intro}
If $\pi$ is an  orthogonal representation of $\Sp(2n,q)$, then
\beq
w_4(\pi)  = \frac{1}{8} \left( \deg \pi - \chi_\pi(g_1) \right)\mc E_1 \quad \quad \quad \text{     for all $n$,} \\
	\eeq
	 
	and
	\beq
	w_8(\pi)=r_1  \mc E_2 + \left( \binom{r_1}{2}+  \binom{r_2}{2} \right) \mc E_1^2 \quad \text{  for $n \geq 2$}.\\
 	\eeq
	Here
	\beq
	r_1 =\frac{1}{16}( \deg \pi-\chi_\pi(g_2))
	\eeq
	and
	\beq
	r_2=\frac{1}{16}( \deg \pi-2 \chi_\pi(g_1)+ \chi_\pi(g_2)).
	\eeq
	\end{thm}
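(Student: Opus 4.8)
The plan is to work entirely inside $\hc^*(Z_X)=\f_2[v_1,\dots,v_n]$, which by Theorem~\ref{dtsym} loses nothing. Write $\pi|_{Z_X}\cong\bigoplus_{S\subseteq\{1,\dots,n\}}m_S\,\eta_S$, where $\eta_S$ is the linear character of $Z_X\cong(\z/2)^n$ with $w_1(\eta_S)=\ell_S:=\sum_{i\in S}v_i$; then $w(\pi)|_{Z_X}=\prod_S(1+\ell_S)^{m_S}$ by the Whitney sum formula. The key input is the divisibility $4\mid m_S$ for every nonempty $S$, which I would obtain from the subgroup $X$ (it has $Z_X$ as its centre and is assembled from $\SL(2,q)=\Sp(2,q)$-blocks): an irreducible constituent of $\pi|_X$ restricts to $Z_X$ as a sum of products of central characters of the $\SL(2,q)$-blocks, and such a character that is nontrivial on $k\ge1$ blocks appears with multiplicity divisible by $2^k$, since each irreducible of $\SL(2,q)$ with nontrivial central character is even-dimensional; for $k=1$ the extra factor of $2$ is forced by orthogonality of $\pi$, as those $\SL(2,q)$-irreducibles are never of real type, so a self-dual $X$-constituent contributing there is symplectic and occurs in $\pi$ with even multiplicity. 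Granting $4\mid m_S$, in $\f_2$-coefficients $(1+\ell_S)^{m_S}=(1+\ell_S^{4})^{m_S/4}$, so $w(\pi)|_{Z_X}=\prod_{S\ne\varnothing}(1+\ell_S^{4})^{m_S/4}$; in particular it is concentrated in degrees $\equiv 0\pmod 4$, as Theorem~\ref{dtsym} predicts.

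Now I extract the parts of degrees $4$ and $8$ and convert multiplicities to character values by Fourier inversion on $Z_X$. Since $\ell_S^{4}=\sum_{i\in S}v_i^{4}$ in characteristic $2$, the degree-$4$ part is $\sum_i v_i^{4}\cdot\tfrac14\sum_{S\ni i}m_S$, and $\sum_{S\ni i}m_S$ is the multiplicity of the character of $Z_X$ that is $-1$ only at coordinate $i$, which Fourier inversion evaluates to $\tfrac12(\deg\pi-\chi_\pi(g_1))$ (the corresponding involution being $\Sp(2n,q)$-conjugate to $g_1$); this gives $w_4(\pi)=\tfrac18(\deg\pi-\chi_\pi(g_1))\,\mc E_1$ for all $n$. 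For degree $8$ the only contributions are $\sum_S\binom{m_S/4}{2}\ell_S^{8}$ and $\sum_{\{S,T\}:S\ne T}\tfrac{m_S}{4}\tfrac{m_T}{4}\ell_S^{4}\ell_T^{4}$; collecting the coefficients of $v_i^{8}$ and of $v_i^{4}v_j^{4}$ $(i\ne j)$ and using symmetry in the $v_i$, $w_8(\pi)=B\,\mc E_2+A\,\mc E_1^{2}$ with
\[
B=\tfrac1{16}\Bigl(\bigl(\textstyle\sum_{S\ni i}m_S\bigr)\bigl(\sum_{S\ni j}m_S\bigr)-\sum_{S\supseteq\{i,j\}}m_S^{2}\Bigr),\qquad A=\binom{\tfrac14\sum_{S\ni i}m_S}{2}.
\]
A second Fourier inversion gives $\sum_{S\supseteq\{i,j\}}m_S=\tfrac14(\deg\pi-2\chi_\pi(g_1)+\chi_\pi(g_2))=4r_2$ and $\sum_{S\ni i}m_S=4(r_1+r_2)$; substituting and reducing mod $2$ (using $4\mid m_S$ to make the binomials integral and to collapse $\sum m_S^{2}$ to $\sum(m_S/4)$) yields $B\equiv r_1$ and $A\equiv\binom{r_1+r_2}{2}$, hence the stated formula once $\binom{r_1+r_2}{2}\equiv\binom{r_1}{2}+\binom{r_2}{2}\pmod2$. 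The hypothesis $n\ge2$ is used only because $\mc E_2=0$ when $n=1$.

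The hard part is the sharp $2$-adic bookkeeping: proving $4\mid m_S$ rigorously needs the Clifford theory of $X$ together with a precise Frobenius--Schur analysis of the $\SL(2,q)$-representations with nontrivial central character, and the closing identity $\binom{r_1+r_2}{2}\equiv\binom{r_1}{2}+\binom{r_2}{2}\pmod2$ holds exactly when $r_1$ and $r_2$ are not both odd, so it rests on a divisibility finer than $4\mid m_S$ at one further step. Everything else — the reduction to $Z_X$, the collapse of the Whitney product, the two Fourier inversions, and the mod-$2$ arithmetic — is formal. As an independent handle on the degree-$8$ coefficients, one can restrict further to $\langle g_1\rangle$ and $\langle g_2\rangle$, where $w(\pi)$ is read off from the multiplicity of the sign character via $w(\pi)|_{\langle g_j\rangle}=(1+u)^{b_j}$, recovering $A\equiv\binom{b_1}{8}$ and $B\equiv\binom{b_2}{8}$ with $b_j=\tfrac12(\deg\pi-\chi_\pi(g_j))$, consistent with the above.
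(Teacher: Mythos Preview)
Your route differs from the paper's. The paper deduces Theorem~\ref{univ.intro} from the stability result Theorem~\ref{intro.thm.univers}: since $\hc^4(G_m)\hookrightarrow\hc^4(G_1)$ and $\hc^8(G_m)\hookrightarrow\hc^8(G_2)$, it suffices to quote the $\SL(2,q)$ formula \eqref{w4.sl2} and the $\Sp(4,q)$ formula \eqref{sp4w8}. You instead expand the Whitney product directly in $\hc^*(Z_X)$ for arbitrary $n$ and Fourier-invert. Your sketch of $4\mid m_S$ via Gow's formula (an $\mc S$-irreducible with nontrivial central character is never orthogonal, so the $k=1$ case picks up an extra factor of $2$) is correct; the paper obtains the same divisibility through the quaternion subgroup $Q_X\cong Q^n$ and Lemma~\ref{ORQn}, which is cleaner bookkeeping but not essentially different.

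The substantive point is the discrepancy you flag at the end. Your computation yields
\[
w_8(\pi)=r_1\,\mc E_2+\tbinom{r_1+r_2}{2}\,\mc E_1^2,
\]
differing from the stated $\binom{r_1}{2}+\binom{r_2}{2}$ by $r_1r_2\pmod 2$. This is not a gap in your argument. Expanding the paper's own $n=2$ expression $w(\pi)=(1+\mc E_1+\mc E_2)^{r_1}(1+\mc E_1)^{r_2}$ through degree~$8$ gives exactly your coefficient $\binom{r_1}{2}+\binom{r_2}{2}+r_1r_2=\binom{r_1+r_2}{2}$; and your cyclic restriction check $A\equiv\binom{b_1}{8}$ with $b_1=4(r_1+r_2)$ reduces by Lucas to $\binom{r_1+r_2}{2}$ as well. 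Equation~\eqref{sp4w8} already carries a visible slip (it writes $m_k(\pi)$ where $m_k(\pi)/4$ is meant), and the cross-term $r_1r_2$ appears to have been dropped in passing to the statement of Theorem~\ref{univ.intro}. So the ``finer divisibility'' you were hunting is not available in general; your coefficient $\binom{r_1+r_2}{2}$ is the one the paper's own method actually produces.
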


   A general formula for the total SWCs $w(\pi)$ is found in Theorem \ref{SpnSWCs}, although it takes some computation  to extract individual SWCs $w_k(\pi)$ from this. For example: %consider $G=\Sp(4,q)$.% Here $g_0=\mathbbm1$, the identity matrix. 

\begin{thm} \label{intro.thm.sp4}
 The total SWC of an orthogonal representation $\pi$  of $\Sp(4,q)$ is
$$w(\pi)=((1+\fe_1)(1+\fe_2))^{r_\pi}(1+\fe_1+\fe_2)^{s_\pi},$$
where \begin{align*} r_\pi&=\frac{1}{16}\Big(\chi_\pi(\mathbbm{1})-\chi_\pi(-\mathbbm{1})\Big) \quad \quad \text{ and }\\
s_\pi&=\frac{1}{16}\Big(\chi_\pi(\mathbbm{1})+\chi_\pi(-\mathbbm{1})-2\chi_\pi(g_1)\Big).
\end{align*}
\end{thm}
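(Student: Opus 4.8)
The plan is to compute the restriction of $w(\pi)$ to $\hc^*(Z_X)$ and then invoke the injectivity half of Theorem~\ref{dtsym}; this is the $n=2$ case of the general formula of Theorem~\ref{SpnSWCs}, but for $n=2$ it can be read off directly. We work in $\hc^*(Z_X)\cong\f_2[v_1,v_2]$, with $Z_X\cong(\z/2)^2$, and record that $\fe_i=v_i^4$, $\mc E_1=\fe_1+\fe_2=v_1^4+v_2^4$, $\mc E_2=\fe_1\fe_2=v_1^4v_2^4$, and that, in characteristic $2$, $(1+\fe_1)(1+\fe_2)=\big((1+v_1)(1+v_2)\big)^4$ and $1+\fe_1+\fe_2=(1+v_1+v_2)^4$.

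First I would decompose $\res^G_{Z_X}\pi$. Let $\lambda_1,\lambda_2$ be the linear characters of $Z_X$ with $w_1(\lambda_i)=v_i$ and put $\lambda_{12}=\lambda_1\lambda_2$, so $\widehat{Z_X}=\{1,\lambda_1,\lambda_2,\lambda_{12}\}$ and $w_1(\lambda_{12})=v_1+v_2$. Let $m_1$ and $m_2$ be the multiplicities of $\lambda_1$ and $\lambda_{12}$ in $\res^G_{Z_X}\pi$; the multiplicity of $\lambda_2$ is also $m_1$, because a suitable permutation matrix in $\Sp(4,q)$ normalizes $Z_X$ interchanging its two coordinates, while $\chi_\pi$ is a class function on $G$. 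By naturality of SWCs, $\res^G_{Z_X}w(\pi)=w(\res^G_{Z_X}\pi)$, and the Whitney sum formula then gives
\[
\res^G_{Z_X}w(\pi)=\big((1+v_1)(1+v_2)\big)^{m_1}(1+v_1+v_2)^{m_2}.
\]
By Theorem~\ref{dtsym} this lies in $\f_2[\mc E_1,\mc E_2]\subseteq\f_2[v_1^4,v_2^4]$, and an elementary check — reduce $m_1,m_2$ modulo $4$ and examine terms of degree $\le 4$ — shows that a product of this shape can lie in $\f_2[v_1^4,v_2^4]$ only when $4\mid m_1$ and $4\mid m_2$. Pulling out fourth powers yields $\res^G_{Z_X}w(\pi)=\big((1+\fe_1)(1+\fe_2)\big)^{m_1/4}(1+\fe_1+\fe_2)^{m_2/4}$, so by injectivity $w(\pi)$ has the claimed form with $r_\pi=m_1/4$ and $s_\pi=m_2/4$.

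It then remains to identify $m_1$ and $m_2$. Since $|Z_X|=4$, Fourier inversion gives $m_\lambda=\tfrac14\sum_{g\in Z_X}\lambda(g)\chi_\pi(g)$, and I would group the four terms by $G$-conjugacy: $Z_X=\{\mathbbm{1}\}\sqcup\{g_1,g_1'\}\sqcup\{-\mathbbm{1}\}$, where $g_1,g_1'$ are the two elements having exactly one $(-1)$-block (these are $G$-conjugate, so $\chi_\pi(g_1)=\chi_\pi(g_1')$) and $-\mathbbm{1}=g_2$. Using $\lambda_1(g_1)+\lambda_1(g_1')=0$ and $\lambda_1(-\mathbbm{1})=-1$ gives $m_1=\tfrac14\big(\chi_\pi(\mathbbm{1})-\chi_\pi(-\mathbbm{1})\big)$, while $\lambda_{12}(g_1)=\lambda_{12}(g_1')=-1$ and $\lambda_{12}(-\mathbbm{1})=1$ gives $m_2=\tfrac14\big(\chi_\pi(\mathbbm{1})-2\chi_\pi(g_1)+\chi_\pi(-\mathbbm{1})\big)$. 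Dividing by $4$ produces exactly the stated expressions for $r_\pi$ and $s_\pi$.

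The only genuinely non-formal input is Theorem~\ref{dtsym}; granting it, the step most prone to error is the bookkeeping at the end — fixing the correspondence between $v_1,v_2$ and $\lambda_1,\lambda_2$ so that the signs $\lambda(g_i)$ come out right, together with the divisibility $4\mid m_1,m_2$ that Theorem~\ref{dtsym} forces. As a sanity check, the degree-$4$ component of $\big((1+\fe_1)(1+\fe_2)\big)^{r_\pi}(1+\fe_1+\fe_2)^{s_\pi}$ is $(r_\pi+s_\pi)\mc E_1=\tfrac18\big(\chi_\pi(\mathbbm{1})-\chi_\pi(g_1)\big)\mc E_1$, which agrees with the formula for $w_4$ in Theorem~\ref{univ.intro}; the degree-$8$ component is likewise compatible with the $w_8$ there, with $r_1=r_\pi$ and $r_2=s_\pi$.
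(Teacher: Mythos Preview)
Your argument is correct and follows the same overall architecture as the paper's: restrict to $Z_X$, decompose $\pi|_{Z_X}$ into the $S_2$-orbits of linear characters, compute the multiplicities $m_1,m_2$ by Fourier inversion (this is exactly the paper's Equation~\eqref{mk} for $n=2$), and invoke the detection part of Theorem~\ref{dtsym}. The formulas you obtain for $m_1,m_2$ match the paper's $m_1(\pi),m_2(\pi)$ in Section~\ref{examples.section}, and $r_\pi=m_1/4$, $s_\pi=m_2/4$.

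The one substantive difference is how you obtain $4\mid m_1$ and $4\mid m_2$. The paper proves this representation-theoretically: it factors the restriction through the quaternion subgroup $Q_X\cong Q^n$ and applies Lemma~\ref{ORQn}, which shows that every nontrivial character of $Z_X$ occurs in an orthogonal representation of $Q^n$ with multiplicity divisible by~$4$. You instead extract the divisibility from the \emph{cohomological} containment in Theorem~\ref{dtsym}: since $\big((1+v_1)(1+v_2)\big)^{m_1}(1+v_1+v_2)^{m_2}$ lies in $\f_2[v_1^4,v_2^4]$, it must be a fourth power in the UFD $\f_2[v_1,v_2]$, forcing $4\mid m_i$. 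Your route is more self-contained for $n=2$ (no auxiliary quaternion lemma needed), while the paper's route gives the divisibility uniformly for all $n$ and is independent of the Steenrod-square analysis underlying the containment statement; either way the two ingredients of Theorem~\ref{dtsym} and Lemma~\ref{ORQn} are doing overlapping work here.
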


We use this theorem to compute:
\begin{cor}  \label{HSW}
The subalgebra
$$\hc^*_{\SW}(\Sp(4,q))\cong \mathbb F_2[\fe_1+\fe_2,\fe_1\fe_2].$$
%where $\mc E_1=\fe_1+\fe_2$, $\mc E_2=\fe_1\fe_2$.
\end{cor}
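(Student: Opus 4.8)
Write $G=\Sp(4,q)$. The plan is to trap $\hc^{*}_{\SW}(G)$, viewed inside $\hc^{*}(Z_X)=\mb F_2[v_1,v_2]$, between $\mb F_2[\mc E_1,\mc E_2]$ from above and from below. For the upper bound, Theorem~\ref{dtsym} makes the restriction $\hc^{*}_{\SW}(G)\to\hc^{*}(Z_X)$ injective, so I identify $\hc^{*}_{\SW}(G)$ with its image. Since $(1+\fe_1)(1+\fe_2)=1+\mc E_1+\mc E_2$ and $1+\fe_1+\fe_2=1+\mc E_1$, Theorem~\ref{intro.thm.sp4} gives $w(\pi)=(1+\mc E_1+\mc E_2)^{r_\pi}(1+\mc E_1)^{s_\pi}$ for every orthogonal $\pi$, which is a polynomial in $\mc E_1$ and $\mc E_2$; hence every homogeneous part $w_k(\pi)$, and therefore all of $\hc^{*}_{\SW}(G)$, lies in $\mb F_2[\mc E_1,\mc E_2]$. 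As $\mc E_1$ has degree $4$ and $\mc E_2$ degree $8$, for the reverse inclusion it is enough to exhibit a single orthogonal $\pi$ with $w_4(\pi)=\mc E_1$ and with $\mc E_2$ appearing in $w_8(\pi)$: then $\mc E_1\in\hc^{*}_{\SW}(G)$ yields $\mc E_1^{2}\in\hc^{*}_{\SW}(G)$, and subtracting the appropriate multiple of $\mc E_1^{2}$ from $w_8(\pi)$ produces $\mc E_2$.

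Expanding $w(\pi)$ over $\mb F_2$, its degree-$4$ part is $(r_\pi+s_\pi)\,\mc E_1$ and its degree-$8$ part is $r_\pi\,\mc E_2+c_\pi\,\mc E_1^{2}$ for some $c_\pi\in\mb F_2$; equivalently, by Theorems~\ref{univ.intro} and~\ref{intro.thm.sp4}, $r_\pi+s_\pi=\tfrac18(\deg\pi-\chi_\pi(g_1))$ and $r_\pi=\tfrac1{16}(\deg\pi-\chi_\pi(-\mathbbm 1))$. So it suffices to produce an orthogonal representation $\pi$ with $r_\pi$ odd and $s_\pi=0$. I would build one by inducing up the block-diagonal subgroup $H=\SL(2,q)\times\SL(2,q)\subset G$, where $G$ acts as $\Sp(V_1)\times\Sp(V_2)$ on $V=V_1\perp V_2$ and $V_1,V_2$ are chosen to be the $(+1)$- and $(-1)$-eigenspaces of $g_1$. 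Then $g_1=(\mathbbm 1,-\mathbbm 1)\in H$, the centralizer $C_G(g_1)$ is exactly $H$, the only $G$-conjugate of $g_1$ lying in $H$ besides $g_1$ itself is the central element $(-\mathbbm 1,\mathbbm 1)$, and $[G:H]=q^{2}(q^{2}+1)$.

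Since $q-1$ and $q+1$ are consecutive even integers, exactly one of them, say $d$, is $\equiv2\pmod4$; choose for $\rho$ an irreducible representation of $\SL(2,q)$ of degree $d$ on which the central element $-\mathbbm 1$ acts by the scalar $-1$ — a discrete-series representation if $d=q-1$, a principal-series representation if $d=q+1$, coming from a suitable character of the relevant maximal torus. Put $\sigma=\rho\boxtimes\mathbf 1$ and $\pi=\tau\oplus\tau^{*}$ where $\tau=\Ind_H^{G}\sigma$; being of the form $\tau\oplus\tau^{*}$, $\pi$ is orthogonal. The induced-character formula, together with the facts about $H$, gives $\chi_\tau(g_1)=\chi_\sigma(\mathbbm 1,-\mathbbm 1)+\chi_\sigma(-\mathbbm 1,\mathbbm 1)=\deg\rho-\deg\rho=0$ and $\chi_\tau(-\mathbbm 1)=[G:H]\,\chi_\sigma(-\mathbbm 1)=-\deg\tau$; hence $\chi_\pi(g_1)=0$ and $\chi_\pi(-\mathbbm 1)=-\deg\pi$, so that $s_\pi=0$ and $r_\pi=\tfrac18\deg\pi$. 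Finally $\deg\pi=2q^{2}(q^{2}+1)\,d$ is divisible by $8$ but not $16$, because $q^{2}+1\equiv2\pmod4$ and $d\equiv2\pmod4$; thus $r_\pi$ is an odd integer, $w_4(\pi)=\mc E_1$, and $w_8(\pi)=\mc E_2+c_\pi\mc E_1^{2}$. Combining with the first paragraph, $\hc^{*}_{\SW}(\Sp(4,q))=\mb F_2[\mc E_1,\mc E_2]=\mb F_2[\fe_1+\fe_2,\fe_1\fe_2]$.

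The main obstacle is producing a suitable $\pi$ uniformly in $q$; this forces a representation whose dimension carries a quadratic-residue sign. Indeed, any representation with a ``polynomial in $q$'' character — permutation representations, the Steinberg representation, the unipotent characters — satisfies $\deg\pi-\chi_\pi(-\mathbbm 1)\equiv0\pmod{32}$ and $\deg\pi-\chi_\pi(g_1)\equiv0\pmod{16}$ when $q\equiv\pm1\pmod8$, hence has $w_4(\pi)=0$ and cannot detect $\mc E_1$ for those $q$; the discrete and principal series of $\SL(2,q)$, of degree $q\mp1\equiv2\pmod4$, are exactly what repairs this. The remaining points — that $\tau\oplus\tau^{*}$ is orthogonal and the two character evaluations above — are routine.
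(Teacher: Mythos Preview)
Your proof is correct and follows essentially the same strategy as the paper: the upper bound comes from Theorem~\ref{dtsym}/Theorem~\ref{intro.thm.sp4}, and the lower bound from an explicit orthogonal representation with $m_2=0$ and $m_1/4$ odd. Your subgroup $H$ is exactly the paper's $X$, and for $q\equiv 3\pmod 4$ your representation $S(\Ind_X^G(\rho\boxtimes 1))$ with $\rho$ cuspidal of degree $q-1$ is precisely the paper's $\Pi_2$. The only real difference is the $q\equiv 1\pmod 4$ case: the paper instead uses the Klingen parabolic induction $\Pi_1=S(\Ind_P^G\alpha)$ of degree $2(q+1)(q^2+1)$, whereas you stay with induction from $X$ using a principal series of $\SL(2,q)$ of degree $q+1$. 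Your version has the mild advantage of treating both residues of $q\pmod 4$ by a single construction; the paper's has the mild advantage that the degree of $\Pi_1$ is smaller. The character computations (that $\chi_\tau(g_1)=0$ via the two $X$-conjugates $(\pm\mathbbm 1,\mp\mathbbm 1)$ and $C_G(g_1)=X$) and the $2$-adic count showing $r_\pi$ odd are all fine.
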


One can in principle produce universal formulas for all $w_k$, akin to those of Theorem \ref{univ.intro} by the following.
Let   $G_n=\Sp(2n,q)$. 

\begin{thm} \label{intro.thm.univers}
Let $m\geq n$. The restriction map $\iota_n^*: \hc^i(G_m)\to \hc^i(G_{n-1})$ is injective for   $i<4n-1$.

\end{thm}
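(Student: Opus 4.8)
The plan is to reduce to the one‑step case $m=n$ and then read the statement off the structure of $\hc^*(\Sp(2n,q);\mb F_2)$ in low degrees.

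\emph{Reduction.} The inclusion $G_{n-1}\hookrightarrow G_m$ factors through the chain $G_{n-1}\hookrightarrow G_n\hookrightarrow\cdots\hookrightarrow G_m$, so $\iota_n^*$ is the composite of the one‑step restrictions $\hc^i(G_k)\to\hc^i(G_{k-1})$ for $k=m,m-1,\dots,n$. If each of these is injective in degrees $i<4k-1$, then — every $k$ occurring being $\ge n$, so that $4k-1\ge 4n-1>i$ — the composite is injective for $i<4n-1$. Hence it suffices to treat $\iota^*\colon\hc^i(\Sp(2n,q);\mb F_2)\to\hc^i(\Sp(2n-2,q);\mb F_2)$, where $\iota$ embeds $\Sp(2n-2,q)$ as the pointwise stabiliser of a hyperbolic plane.

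\emph{The one‑step map.} The key input is the mod $2$ cohomology ring of the finite symplectic groups. Via the Brauer lift one identifies $\hc^*(\Sp(2n,q);\mb F_2)$ with the mod $2$ cohomology of the homotopy fibre of $\psi^q-1$ on $\BSp(n)^{\wedge}_{2}$; since $\psi^q-1$ acts on $\hc^{4i}(\BSp(n);\mb F_2)$ by the even integer $q^{2i}-1$ it is mod $2$ trivial, so the Eilenberg--Moore spectral sequence collapses and gives
\beq
\hc^*(\Sp(2n,q);\mb F_2)\;\cong\;\mb F_2[q_1,\dots,q_n]\otimes\Lambda[x_1,\dots,x_n],\qquad |q_k|=4k,\quad |x_k|=4k-1.
\eeq
(For $n=1$ this is $\hc^*(\SL(2,q);\mb F_2)=\mb F_2[\fe_1]\otimes\Lambda[c]$; the $q_k$ span $\hc^*_{\SW}$ while the exterior classes $x_k$ are invisible to Stiefel--Whitney classes, consistent with Corollary \ref{HSW}.) By naturality of the Brauer lift under the block inclusion $\Sp_{2n-2}\hookrightarrow\Sp_{2n}$ — that is, under $\iota$ — the map $\iota^*$ sends $q_k\mapsto q_k$, $x_k\mapsto x_k$ for $k\le n-1$ and $q_n\mapsto 0$, $x_n\mapsto 0$. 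Thus $\ker\iota^*$ is the ideal $(q_n,x_n)$, whose nonzero elements of lowest degree occur in degree $|x_n|=4n-1$; so $\iota^*$ is injective — indeed an isomorphism — in every degree $i<4n-1$, and the bound is sharp since $x_n$ dies at degree $4n-1$. (Concretely, $\iota$ factors through the vector stabiliser $\mathrm{Stab}(v)=K\rtimes\Sp(2n-2,q)$ with $K$ a $p$‑group of order $q^{2n-1}$; since $\hc^*(K;\mb F_2)=\mb F_2$ one has $\hc^*(\mathrm{Stab}(v);\mb F_2)=\hc^*(\Sp(2n-2,q);\mb F_2)$, and $\iota^*$ becomes restriction to a subgroup of index $q^{2n}-1$.)

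\emph{Main obstacle.} The substantive content is the displayed computation: one must verify that the only algebra generators of $\hc^*(\Sp(2n,q);\mb F_2)$ in degrees $<4n-1$ are $q_1,\dots,q_{n-1},x_1,\dots,x_{n-1}$, with no unexpected relations in that range, and that $\iota^*$ is onto in those degrees — equivalently, one must exclude ``diagonal'' classes such as $c^{\otimes n}$ (of degree $3n<4n-1$ when $n\ge2$), which are $S_n$‑invariant and die under $\iota^*$ but simply do not lie in $\hc^*(\Sp(2n,q);\mb F_2)$. If one prefers to avoid the Brauer‑lift input, the self‑contained route is to work with $X=\Sp(2,q)\wr S_n\le\Sp(2n,q)$, which contains a Sylow $2$‑subgroup by the lifting‑the‑exponent identity $v_2(|\Sp(2n,q)|)=\sum_{i=1}^{n}v_2(q^{2i}-1)=n\,v_2(q^2-1)+v_2(n!)=v_2(|X|)$, so that $\hc^*(\Sp(2n,q);\mb F_2)\hookrightarrow\hc^*(X;\mb F_2)$; one then controls $\hc^*(X;\mb F_2)$ through the (Nakaoka‑)collapsing wreath‑product spectral sequence built from $\hc^*(\SL(2,q);\mb F_2)=\mb F_2[\fe_1]\otimes\Lambda[c]$, uses Quillen's detection theorem together with Theorem \ref{dtsym} to pin down the part seen on $Z_X$, and uses a degree count (the lowest positive degree of $\hc^*(\SL(2,q);\mb F_2)$ being $3$) to bound the nilpotent part. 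In either guise the threshold $4n-1=3+4(n-1)$ is the degree of the lowest symmetric combination of the $\SL(2,q)$‑generators which both survives restriction from $\Sp(2n,q)$ and becomes zero on $\Sp(2n-2,q)$ — namely $x_n$.
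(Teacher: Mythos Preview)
Your argument is correct, but the route differs from the paper's. The paper never computes $\hc^*(G_n)$ itself: it uses only that $X_m \cong \mc S^m$ detects $G_m$ (Lemma \ref{spsl2}), so that $\hc^*(G_m)$ embeds in $\hc^*(X_m)^{S_m}$, and then invokes the explicit description (Theorem \ref{lind}, from Fiedorowicz--Priddy) of this invariant ring as having a linear basis of monomials $\mc E^{\mathbf r}\mc F^{\mathbf s}$ with $\deg \mc E_j=4j$, $\deg \mc F_j=4j-1$, $s_j\in\{0,1\}$. A direct degree count shows that any such monomial of degree $<4n-1$ involves only $\mc E_j,\mc F_j$ with $j\le n-1$, and these restrict to their namesakes in $\hc^*(X_{n-1})^{S_{n-1}}$. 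So injectivity holds on the whole invariant ring, hence a fortiori on the image of $\hc^*(G_m)$. Your argument reaches the same algebra and the same degree count, but by identifying $\hc^*(G_n)$ outright with $\mb F_2[q_1,\dots,q_n]\otimes\Lambda[x_1,\dots,x_n]$ via Friedlander's theorem and the Eilenberg--Moore spectral sequence. That buys you the stronger conclusion that $\iota^*$ is an \emph{isomorphism} in the range, and is conceptually satisfying, but it imports heavier machinery and leaves the crucial computation as a black box (as you acknowledge in your ``Main obstacle'' paragraph). The paper's approach is leaner: it never needs to know whether $\hc^*(G_m)\hookrightarrow\hc^*(X_m)^{S_m}$ is onto, only that it is into, and it stays entirely within the detection framework already set up. Your sketched alternative via the wreath product $\mc S\wr S_n$ is closer in spirit to the paper but still more elaborate than necessary; the paper gets by with the product $\mc S^n$ alone.
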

(Compare \cite[Corollary 6.7]{NSn} and  \cite[Theorems 3, 8]{GJgln}.)

 A representation $\pi$ of a group $G$ has a total Chern class $c(\pi) \in \hc^{*}(G,\z)$. Under the coefficient map to $\hc^{*}(G,\mathbb F_2)$, these map to $w(\pi \oplus \pi^\vee)$. Hence this ``mod $2$ Chern class of $\pi$'' is computable for $G=\Sp(2n,q)$ by our formulas. 

The paper is laid out as follows. Preliminaries are reviewed and developed in Section \ref{section2}.
Section \ref{ch5} contains the heart of the paper; we prove Theorem \ref{dtsym}, and give the product formula for the total SWC,
as Theorem  \ref{SpnSWCs}. Some simplifications come when the representation is irreducible, by a formula of Gow. 
We develop our formulas for $\Sp(4,q)$ and $\Sp(8,q)$ in Section \ref{examples.section}, and prove Theorem \ref{intro.thm.sp4} and Corollary \ref{HSW}.  We also illustrate application of our formula by computing the mod $2$ Chern class of the Weil representations.
Finally, in Section \ref{universal.section}  we establish Theorem \ref{intro.thm.univers}, and deduce Theorem \ref{univ.intro} from this.
 Certain technical arguments belonging to vector bundle theory are sketched in the Appendix.

\section{Notations and Preliminaries} \label{section2}

As this paper is a continuation of \cite{NSSL2}, we use the same notations and conventions, which we now review.

\subsection{Representations}

Let $G$ be a finite group. All the representations $(\pi,V)$ considered in this paper (before the Appendix) are complex finite dimensional.  Let $\irr(G)$ be the set of isomorphism classes of irreducible representations of $G$. Write $(\pi^\vee,V^\vee)$ for the dual representation. If $H$ is a subgroup of $G$, write $\res^G_H\pi$ or $\pi|_H$ for the restriction of $\pi$ to $H$.  A \emph{linear character} $\chi$ of $G$ is a degree 1 representation. We call $\chi$ \textit{quadratic} when $\chi^2=1$.

We say $\pi$ is \emph{orthogonal} (resp., \emph{symplectic}), provided there exists a non-degenerate $G$-invariant symmetric (resp., antisymmetric) bilinear form $B : V \times V \to \cc$. 
When $\pi$ is self-dual and irreducible, it is either orthogonal or symplectic. In this
case, the Frobenius-Schur Indicator $\varepsilon(\pi)$ is a sign defined as $1$ when $\pi$ is orthogonal,
and $-1$ when $\pi$ is symplectic. Whereas it is $0$, when $\pi$ is not self-dual.

One can symmetrize a general $(\pi,V)$ by defining $S(\pi):=\pi\oplus\pi^\vee$ on the vector space $V\oplus V^\vee$. Under the symmetric  $G$-invariant bilinear map $B$ on $V\oplus V^\vee$ as $B((v,\as),(w,\beta))= \langle \alpha,w \rangle+ \langle \beta,v \rangle$, the representation $S(\pi)$ is orthogonal. We call $S(\pi)$ the \textit{symmetrization} of $\pi$. (In \cite[Definition 4.3 ]{shl} it is called the \emph{hyperbolic space} on $V$.)

Every orthogonal  representation  $\Pi$ of $G$ can be decomposed as 
\begin{equation}\label{decorth}\Pi\cong  \bigoplus_i \pi_i \oplus\bigoplus_j S(\varphi_j), \end{equation}
such that each $\pi_i$ is irreducible orthogonal and $\varphi_j$ are irreducible non-orthogonal representations of $G$.

A   representation $\pi$ of $G$ is said to be an \textit{orthogonally irreducible} representation (OIR), provided $\pi$ is orthogonal, and can not be decomposed into a direct sum of orthogonal representations. An irreducible representation $\pi$ is orthogonally irreducible if and only if $\pi$ is orthogonal. Moreover, for $\varphi$ irreducible and non-orthogonal, its symmetrization $S(\varphi)$ is an OIR.

\subsection{Detection} \label{nlizer}

 As in the earlier work, we make use of \textit{detection}. To recall this notion, let $i:H\hookrightarrow G$ be a subgroup.  We say $H$ \textit{detects the mod $2$ cohomology of} $G$, when the restriction map
$$i^*:\hc^*(G)\rightarrow \hc^*(H)$$
 is injective, and we say $H$ \emph{detects SWCs} of $G$ when the restriction of $i^*$ to $\hc^*_{\SW}(G)$ is injective. Often the cohomology of $H$ admits an easy description, e.g., when it is polynomial. Then it is convenient to give our formulas there, since no information is lost.
 Let $N_G(H)$ be the normalizer of $H$ in $G$, which acts on $H$ by conjugation. This induces an action of $N_G(H)$ on $\hc^*(H)$, and generally the image of $i^*$ is contained in the subalgebra 
  $\hc^*(H)^{N_G(H)}$ fixed under this action.

\subsection{Characteristic Classes}
Let $\pi$ be an orthogonal representation of degree $d$. Associated to $\pi$ are cohomological invariants
$$w_i(\pi)\in \hc^i(G)\quad;\quad i=0,1,2,\hdots,d$$
known as the $i$th \emph{Stiefel-Whitney Class} (SWC) of $\pi$. Their sum $w(\pi)=w_0(\pi)+w_1(\pi)+\hdots$ is called the \emph{total SWC} of $\pi$. We refer the reader to \cite[Section 2.3]{NSSL2} for detailed description.

Also, associated to a complex representation $\pi$ of $G$ are cohomology classes $c_i(\pi)\in \hc^{2i}(G,\z),$
 called \textit{Chern classes} (CCs). Their sum
 \beq
c(\pi)=c_0(\pi)+c_1(\pi)+c_2(\pi)+\hdots\in \hc^*(G,\z)
\eeq
is called the \textit{total Chern class} of $\pi$. We have  \begin{equation}\label{chswc}w(S(\pi))=\kappa(c(\pi)),\end{equation}
 where $\kappa : \hc^*(G,\z)\rightarrow \hc^*(G,\mathbb F_2)$ is the  \emph{coefficient homomorphism} of cohomology. (See \cite[Lemma 1]{ganguly2022stiefel}, based on \cite[Problem 14-B]{milnor} for proof.) So we interpret $w(S(\pi))$  as the ``mod $2$ Chern class'' of $\pi$.

For $n,i\geq 0$, there are additive homomorphisms on cohomology, called \textit{Steenrod Squares},
 $$\Sq^i:\hc^n(G)\rightarrow \hc^{n+i}(G).$$
These operations are \textit{functorial}, meaning for a group homomorphism  $\varphi: G_1\to G_2$, we have $$\varphi^*(\Sq^iy)=\Sq^i(\varphi^*(y)) \text{ for all }y\in \hc^i(G_2).$$
   They satisfy $\Sq^i(x)=x\cup x$ for $i=\deg(x)$, and $\Sq^i(x)=0$ for $i>\deg(x)$. There is the \textit{Cartan Formula}: For $x,y\in \hc^*(G)$, \begin{equation}\label{cartan}
   \Sq^n(x\cup y)=\sum_{i+j=n}(\Sq^ix)\cup (\Sq^jy).
   \end{equation}

The well-known \textit{Wu's formula} states:
\begin{prop}[\cite{may}, Chapter 23, Section 6]\label{Wu}
Let $\pi$ be an orthogonal representation of $G$. The cohomology class $\Sq^i(w_j(\pi))$ can be expressed as a polynomial in $w_1(\pi),\hdots,w_{i+j}(\pi)$:
$$\Sq^i(w_j(\pi))=\sum_{t=0}^i{j+t-i-1\choose t}w_{i-t}(\pi)w_{j+t}(\pi).$$
\end{prop}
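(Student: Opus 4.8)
\emph{Proof proposal.} Wu's formula is classical and one may simply cite \cite{may}; here is the route I would take to prove it directly. The identity only involves universal characteristic classes, so I would first reduce to the classifying space. An orthogonal representation $\pi$ of degree $d$ is classified by a map $B\pi\colon BG\to BO(d)$ with $w_k(\pi)=(B\pi)^*w_k$ for the universal classes $w_k\in\hc^k(BO(d);\mb F_2)$, and each $\Sq^i$ is natural; so it suffices to prove
\[
\Sq^i(w_j)=\sum_{t=0}^{i}\binom{j+t-i-1}{t}\,w_{i-t}\,w_{j+t}
\]
in $\hc^*(BO(d);\mb F_2)=\mb F_2[w_1,\dots,w_d]$ for all $d\ge i+j$, the remaining cases (and the case of a representation) following by functoriality, and both sides being zero whenever an index exceeds $d$.

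Next I would invoke the splitting principle. The inclusion $BO(1)^{\times d}\hookrightarrow BO(d)$ induces a ring injection $\mb F_2[w_1,\dots,w_d]\hookrightarrow\mb F_2[x_1,\dots,x_d]$ sending $w_j$ to the elementary symmetric polynomial $\sigma_j(x_1,\dots,x_d)$, and this injection commutes with all $\Sq^i$. Hence it is enough to check the displayed identity in the polynomial ring after the substitution $w_j\mapsto\sigma_j$.

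The left-hand side is then a short computation with the Cartan formula \eqref{cartan}: the total square $\Sq=\sum_i\Sq^i$ is a ring homomorphism and $\Sq(x_k)=x_k+x_k^2$ since $\deg x_k=1$, so $\Sq(\sigma_j)=\sum_{|S|=j}\prod_{k\in S}(x_k+x_k^2)$, and picking off the part of degree $i+j$ shows that $\Sq^i(\sigma_j)$ is the sum of the monomials $\prod_{k\in T}x_k^2\prod_{k\in U}x_k$ over disjoint $T,U$ with $|T|=i$ and $|U|=j-i$. Expanding the right-hand side the same way, the coefficient there of a monomial $\prod_{k\in T}x_k^2\prod_{k\in U}x_k$ with $|T|=a$ and $|U|=b$ (necessarily $2a+b=i+j$) comes out to $\sum_t\binom{j+t-i-1}{t}\binom{b}{i-t-a}$ reduced mod $2$, where $\binom{n}{k}=0$ for $k<0$. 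Comparing coefficients, the proposition reduces to the purely combinatorial congruence that this sum equals $1$ when $a=i$ and $0$ when $a<i$ (it vanishes automatically when $a>i$). This is a Chu--Vandermonde-type identity modulo $2$, provable by a one-line generating-function argument or by Lucas' theorem; equivalently one can package everything as $\sum_N\Sq(\sigma_N)\,t^N=\prod_k(1+tx_k+tx_k^2)$ and read the coefficients off.

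The conceptual steps — naturality, the splitting principle, the Cartan computation — are routine. The one place that needs genuine care is the final binomial congruence: fixing the summation range and the convention $\binom{n}{k}=0$ for $k<0$ correctly so that the vanishing really does hold for every $a<i$, not merely in the first few cases.
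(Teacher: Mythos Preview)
The paper does not prove this proposition at all; it is stated with a citation to \cite{may} and used as a black box.  There is therefore nothing to compare your argument against.

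Your sketch is the standard route and is essentially correct.  The reduction to $BO(d)$ via naturality, the passage to $\mb F_2[x_1,\dots,x_d]$ via the splitting principle, and the Cartan computation of $\Sq^i(\sigma_j)$ are all fine.  For the step you flag as delicate, here is a clean way to close it: over $\mb F_2$ one has the formal power series identity $\sum_{t\ge 0}\binom{j+t-i-1}{t}z^t=(1+z)^{i-j}$, so the convolution $\sum_t\binom{j+t-i-1}{t}\binom{b}{i-t-a}$ is the coefficient of $z^{i-a}$ in $(1+z)^{i-j}(1+z)^{b}=(1+z)^{2(i-a)}=(1+z^2)^{i-a}$ (using $2a+b=i+j$).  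That coefficient is $\binom{i-a}{(i-a)/2}$ if $i-a$ is even and $0$ otherwise, and by Kummer's theorem $\binom{2m}{m}$ is odd only for $m=0$.  Hence the sum is $1$ exactly when $a=i$, as you need.
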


 \begin{cor} Suppose $w_1(\pi)=w_2(\pi)=0$. Then if $w_i(\pi) \neq 0$, then $i$ is a multiple of $4$.
\end{cor}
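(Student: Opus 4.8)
The plan is to argue by strong induction on $i$, using Wu's formula (Proposition~\ref{Wu}) and the hypothesis $w_1(\pi)=w_2(\pi)=0$; write $w_k$ for $w_k(\pi)$. The base cases $i=1,2$ are the hypothesis, and for $i=3$ Wu's formula with $i=1$, $j=2$ gives $\Sq^1 w_2 = w_1 w_2 + w_3$, hence $w_3 = \Sq^1 w_2 + w_1 w_2 = 0$. For the inductive step fix $i \ge 5$ with $4 \nmid i$ and assume $w_j = 0$ for all $j$ with $1 \le j < i$ and $4 \nmid j$; in particular $w_1 = w_2 = w_3 = 0$ are all available. The mechanism I would use in each case is: choose $a, j$ with $a + j = i$, $a \le j$, $4 \nmid j$ (so that $w_j = 0$ by induction) and such that the coefficient $\binom{j-1}{a}$ of $w_0 w_i$ in the Wu expansion of $\Sq^a w_j$ is odd. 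Then $0 = \Sq^a w_j$ forces $w_i = 0$, once one checks that the remaining terms $\binom{j+t-a-1}{t} w_{a-t} w_{j+t}$ for $0 \le t < a$ vanish. These cross terms do vanish for free: the indices $a-t$ and $j+t$ sum to $i$, and since $4 \nmid i$ at most one of them is divisible by $4$, so the other lies in $[1,i)$ and is not a multiple of $4$, whence its SWC is $0$ by induction (with $w_1, w_2, w_3 = 0$ handling the small indices).

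Granting this, the odd cases close quickly. If $i \equiv 3 \pmod 4$, take $a = 1$ and $j = i-1 \equiv 2 \pmod 4$; Wu's formula reads $\Sq^1 w_{i-1} = w_1 w_{i-1} + (i-2) w_i$ with $i - 2$ odd, so $w_i = 0$. If $i \equiv 1 \pmod 4$, take $a = 2$ and $j = i - 2 \equiv 3 \pmod 4$; Wu's formula reads $\Sq^2 w_{i-2} = w_2 w_{i-2} + (i-4) w_1 w_{i-1} + \binom{i-3}{2} w_i$, and $\binom{i-3}{2}$ is odd because $i - 3 \equiv 2 \pmod 4$, so again $w_i = 0$.

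The main obstacle is the case $i \equiv 2 \pmod 4$: the constraints ``$4 \nmid j$, $a \le j$, and $\binom{j-1}{a}$ odd'' need not be simultaneously satisfiable by any single Steenrod square — already $i = 6$ leaves only $a = 1$ (where $\binom{4}{1}$ is even) and $a = 3$ (where $\binom{2}{3} = 0$) — so Wu's formula by itself does not obviously close the induction. I would try to repair this with iterated Steenrod operations, applying a suitable admissible monomial $\Sq^I$ to a known-vanishing $w_j$ and extracting the $w_i$-coefficient via the Adem relations together with Wu's formula; alternatively, for the symplectic groups actually at issue one can appeal to Theorem~\ref{dtsym}, which realizes the image of $\hc^*_{\SW}(\Sp(2n,q))$ inside a polynomial algebra generated in degree $4$ and so forces $w_i(\pi) = 0$ for every $i$ with $4 \nmid i$.
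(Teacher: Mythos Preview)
Your induction correctly dispatches $i=3$ and $i=5$, and you rightly isolate the obstruction at $i\equiv 2\pmod 4$. But this obstruction is genuine: the corollary as stated is \emph{false} for arbitrary finite groups, so no Adem-relation argument can close the gap. A counterexample is already implicit in Section~\ref{dickin}: for $E=(\z/2)^3$ the regular representation has $w(\reg(E))=\mc D(E)=1+d_1+d_2+d_3$, with the Dickson invariants $d_i$ nonzero of degrees $4$, $6$, $7$ respectively. Thus $w_1(\reg(E))=w_2(\reg(E))=0$ while $w_6(\reg(E))\neq 0$ and $w_7(\reg(E))\neq 0$. Note too that your treatment of $i\equiv 3\pmod 4$ for $i\geq 7$ already presupposes the $i\equiv 2$ case via the hypothesis $w_{i-1}=0$, so the whole induction unravels from $i=6$ onward.

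The paper's own ``proof'' is simply ``This is clear,'' which is an oversight. What Wu's formula does yield from $w_1=w_2=0$ is $w_3=0$, so that the least positive degree in which a nonzero $w_i$ can occur is $4$; perhaps that weaker assertion is what was intended. Your fallback to Theorem~\ref{dtsym} does give the full conclusion for orthogonal representations of $\Sp(2n,q)$, since $\hc^*_{\SW}(\Sp(2n,q))$ embeds in a polynomial ring on generators of degree $4$ --- but that theorem is established later by independent means and cannot be invoked as the proof of this preliminary corollary.
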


\begin{proof} This is clear.
\end{proof}
\subsection{Symmetric Functions}
The elementary symmetric functions $\mc{E}_k(\mathbf x)$ over $\mathbb F_2$ in variables $x_1,x_2,\hdots,x_n$ are defined for $k\leq n$ as,
$$\mc{E}_k(\mathbf x):=\sum_{1\leq i_1<\hdots<i_k\leq n}x_{i_1}x_{i_2}\hdots x_{i_k}\in \mathbb F_2[x_1,\hdots,x_n].$$
Whereas for $k>n$, these are defined to be $0$.
For example, $\mc E_1(x_1,x_2)=x_1+x_2$ and $\mc E_2(x_1,x_2)=x_1x_2$. Of course,  
$$\prod_{i=1}^n (1+x_i)=1+\mc E_1(\mathbf x)+\hdots+\mc E_n(\mathbf x).$$

\subsection{Cyclic Groups}\label{cy}
Let $k$ be even, and $G=C_k$ the cyclic group of order $k$. Let $g$ be a generator of $G$. We write $g^{k/2}=-1$, the unique order $2$ element of $G$. We say a linear character $\chi: G \to \cc$ is \emph{odd}, when $\chi(-1)=-1$, and \emph{even}, when $\chi(-1)=1$.
Let `sgn' denote the unique non-trivial quadratic character of $G$.

It is known \cite{KT} that  for  $k\equiv2 \pmod 4$, 
$\hc^*(C_k)=\mathbb F_2[v] 
$ where $v=w_1(\sgn)$. In this case, (see \cite[Lemma 2.5]{NSSL2} for instance) for a representation $\pi$, \beq
w(\pi)=(1+v)^{b_\pi}
\eeq
with $b_\pi=\cfrac{1}{2} \left( \deg \pi-\chi_\pi(-1) \right)$.\\

Let $C_2^n$ be the $n$-fold product of $C_2$, with projection maps $\pr_i: C_2^n\to C_2$ for $i=1, \hdots,n$. By K\"{u}nneth, we have 
\begin{equation}\label{Cnk}\hc^*(C_2^n)=\mathbb F_2[v_1,\dots,v_n], \end{equation}
where we put $v_i=w_1(\sgn\circ\pr_i)$ for $1\leq i\leq n$.

\subsection{OIRs of a Direct Product of Groups}\label{OIRD}
Let $G_1,\hdots,G_n$ be finite groups and $G=G_1\times\cdots\times G_n$ be their direct product.
Given $G_i$-representations $(\pi_i,V_i)$, one can form their \textit{external tensor product} $ \pi= \pi_1\boxtimes\cdots\boxtimes\pi_n$ from the action of the product group $G$ on the tensor space $V_1\otimes\cdots\otimes V_n$ as,
$$(g_1,\hdots,g_n)(v_1\otimes\cdots\otimes v_n)=g_1v_1\otimes\cdots\otimes g_nv_n$$
for $(g_1,\hdots,g_n)\in G$ and $v_1\otimes\cdots\otimes v_n\in V_1\otimes\cdots\otimes V_n.$  (See \cite[Exercise 2.36, page 24]{fulton}.) 
 If each $\pi_i$ is irreducible, then so is
$\pi$. All irreducible representations of $G$ decompose in this manner.
The Frobenius-Schur indicator of $\pi$ is given by  
\begin{equation}\label{fsiprod}
\begin{split}
\varepsilon(\pi) 
&=\varepsilon(\pi_1)\varepsilon(\pi_2)\hdots\varepsilon(\pi_n).
\end{split}
\end{equation}
Write $F(\pi)$ for the multiset $\{\pi_1,\pi_2,\hdots,\pi_n\}$. %We have $\pi$ is orthogonal iff all $\pi_i$ are self-dual and there are even number of symplectic representations in the multiset $\{\pi_1,\pi_2,\hdots,\pi_n\}$. \\
Then from \eqref{fsiprod}, we can describe the OIRs of $G$ in terms of irreducible representations of $G_i$ as follows: 
\begin{enumerate}
\item \emph{Irreducible orthogonal representations} of the form $\pi=\pi_1\boxtimes\cdots\boxtimes\pi_n,$ where
$\pi_i\in \irr(G_i)$ are self-dual  for each $i$ and an even number of representations in $F(\pi)$ are symplectic. % In particular the case when all $\pi_i$ are orthogonal.

\item \emph{Symmetrization of irreducible non-orthogonal representations}
 \newline $\pi=S(\varphi)=S(\varphi_1\boxtimes\cdots\boxtimes\varphi_n),$ where $\varphi_i\in \irr(G_i)$ for $i=1,\hdots, n$ and exactly one of the following holds:
\begin{enumerate} \item At least one of $\varphi_i$ is not self-dual.
	\item  Each $\varphi_i$ is self-dual and there is an odd number of symplectic representations
 in $F(\varphi)$.
\end{enumerate}
\end{enumerate}

\subsection{Quaternion Group}\label{qg}

Let $Q$ be the quaternion group of order 8. There are four linear characters of $Q$, which we denote by $1$, $\chi_1,\chi_2, \chi_1\otimes\chi_2$. Each one is quadratic.  The group also possesses a unique   irreducible representation  $\rho$ of degree $2$.  It is symplectic.

Let $Q^n$ be the $n$-fold product of $Q$. From Section \ref{OIRD} above, any irreducible representation $\pi$ of $Q^n$ has  the form
$\pi\cong\pi_1\boxtimes\cdots\boxtimes\pi_n$
where each $\pi_i\in \irr(Q)$. Since every representation of $Q$ is self-dual, the same is true for $Q^n$.

Put $I_\pi=\{i:\pi_i\cong \rho\}$, and $r(\pi)=|I_\pi|$.
From \eqref{fsiprod} we obtain:
\begin{lemma}\label{orev}
An irreducible representation $\pi$ of $Q^n$ is orthogonal if and only if $r(\pi)$ is even. % $\#\{i:\phi_i\cong \rho\}$ is even. 
\end{lemma}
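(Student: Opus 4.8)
The plan is to read off the result directly from the multiplicativity of the Frobenius–Schur indicator in \eqref{fsiprod}. First I would recall from Section \ref{qg} that every irreducible representation of $Q$ is self-dual, hence every $\pi\cong\pi_1\boxtimes\cdots\boxtimes\pi_n$ is self-dual; therefore $\pi$ is either orthogonal or symplectic, and it is orthogonal precisely when $\varepsilon(\pi)=1$. So the lemma reduces to computing $\varepsilon(\pi)$.

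Next I would compute the factors $\varepsilon(\pi_i)$. Each of the four linear characters $1,\chi_1,\chi_2,\chi_1\otimes\chi_2$ of $Q$ is quadratic, hence real-valued, hence orthogonal, so $\varepsilon(\pi_i)=1$ whenever $\pi_i$ is one-dimensional. The unique two-dimensional irreducible $\rho$ is symplectic, so $\varepsilon(\rho)=-1$. Plugging these into \eqref{fsiprod} gives
\[
\varepsilon(\pi)=\prod_{i=1}^{n}\varepsilon(\pi_i)=\prod_{i\in I_\pi}\varepsilon(\rho)=(-1)^{r(\pi)}.
\]
Thus $\varepsilon(\pi)=1$ if and only if $r(\pi)$ is even, which together with the first paragraph is exactly the claim.

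There is no real obstacle here: the only points requiring a word of justification are that a quadratic character is orthogonal (it is real, so its indicator is $+1$) and that self-duality of $\pi$ is what licenses the equivalence ``orthogonal $\iff \varepsilon(\pi)=1$''; both are immediate from the setup recalled in Sections \ref{qg} and \ref{OIRD}. One could also phrase the whole argument as a special case of the classification of OIRs of a direct product in Section \ref{OIRD}, since here self-duality is automatic and the two cases there collapse to the parity of the number of symplectic tensor factors.
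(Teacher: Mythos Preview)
Your proof is correct and is exactly the argument the paper intends: the lemma is stated as an immediate consequence of \eqref{fsiprod}, using that the linear characters of $Q$ have indicator $+1$ while $\rho$ has indicator $-1$, so $\varepsilon(\pi)=(-1)^{r(\pi)}$. There is nothing to add.
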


Let $Z$ be the center of $Q$, which is $\{\pm 1\}$. For a linear character $\theta$ of $Z^n$, write $[\theta, \pi]$ for the multiplicity of $\theta$ in $\pi|_{Z^n}$.
 
\begin{lemma}\label{ORQn}
Let $\pi$ be an orthogonal representation of $Q^n$. Let $\theta$ be a non-trivial linear character of $Z^n$. Then
$[\theta, \pi]$ is a multiple of $4$.
\end{lemma}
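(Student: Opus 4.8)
The strategy is to decompose $\pi$ through \eqref{decorth} into irreducible orthogonal representations and symmetrizations $S(\varphi)$ of irreducible symplectic ones — recall that every irreducible representation of $Q^n$ is self-dual, so a non-orthogonal irreducible is automatically symplectic — and then to compute the multiplicity $[\theta,-]$ for each building block by explicitly restricting the relevant external tensor product to $Z^n$.

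The basic computation I would carry out first is the restriction to $Z^n=Z\times\cdots\times Z$ of an irreducible $\sigma=\sigma_1\boxtimes\cdots\boxtimes\sigma_n$ of $Q^n$. Since $\sigma|_{Z^n}=\sigma_1|_Z\otimes\cdots\otimes\sigma_n|_Z$, and since each linear character of $Q$ is quadratic hence trivial on the commutator subgroup $Z$, whereas $\rho(-1)=-\Id$ gives $\rho|_Z=\sgn\oplus\sgn$, I can write $\theta=\theta_1\otimes\cdots\otimes\theta_n$, put $J=\{i:\theta_i=\sgn\}$, and read off that $[\theta_i,\sigma_i|_Z]$ is $1$ when $\sigma_i$ is linear and $i\notin J$, is $2$ when $\sigma_i\cong\rho$ and $i\in J$, and is $0$ otherwise. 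Multiplying over $i$, this shows $[\theta,\sigma]=0$ unless $I_\sigma=J$, in which case $[\theta,\sigma]=2^{|J|}$; note that $|J|\geq 1$ because $\theta$ is non-trivial.

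Now I would finish case by case. If $\sigma$ is irreducible and orthogonal with $[\theta,\sigma]\neq 0$, then $r(\sigma)=|I_\sigma|=|J|$ is even by Lemma \ref{orev}, hence $|J|\geq 2$ and $[\theta,\sigma]=2^{|J|}$ is a multiple of $4$. If $\varphi$ is an irreducible (symplectic) representation, then self-duality gives $S(\varphi)=\varphi\oplus\varphi^\vee\cong\varphi\oplus\varphi$, so $[\theta,S(\varphi)]=2[\theta,\varphi]$, which by the previous paragraph is either $0$ or $2^{|J|+1}$, again a multiple of $4$ since $|J|\geq 1$. Adding these contributions over the summands of \eqref{decorth} yields the lemma.

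The one place that requires care — and the only place the hypothesis that $\pi$ is orthogonal enters — is the borderline case $|J|=1$: an irreducible summand consisting of a single $\rho$-factor over the unique $\sgn$-coordinate of $\theta$ would contribute multiplicity only $2$. For irreducible orthogonal summands this configuration is excluded exactly by the parity statement of Lemma \ref{orev}; for the symmetrization summands the tautological doubling $S(\varphi)\cong\varphi\oplus\varphi^\vee$ restores the missing factor of $2$.
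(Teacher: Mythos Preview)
Your proof is correct and follows essentially the same route as the paper's: reduce to OIRs via \eqref{decorth}, compute the restriction of an irreducible $\sigma=\sigma_1\boxtimes\cdots\boxtimes\sigma_n$ to $Z^n$ factor by factor (using that linear characters of $Q$ are trivial on $Z$ while $\rho|_Z=\sgn\oplus\sgn$), and then invoke Lemma~\ref{orev} for the irreducible orthogonal summands and the doubling $S(\varphi)\cong\varphi\oplus\varphi$ for the symplectic ones. The only cosmetic difference is that the paper fixes the OIR and identifies the single character $\theta_\pi$ appearing in its restriction, whereas you fix $\theta$ and identify which irreducibles contribute; the arithmetic is identical.
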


\begin{proof}
 We may assume that $\pi$ is an OIR. Let $r=r(\pi)$.
  
 If $r=0$, then $\pi|_Z$ is trivial, since $Z$ is in the kernel of all linear characters of $Q$.  
So take $r>0$. 
Define a linear character $\theta_\pi$ of $Z^n$ by $\theta_{\pi}=\boxtimes_i \theta_i$, with
\beq
\theta_i=\begin{cases} \sgn, & i \in I_\pi \\
1 & i \notin I_\pi. \\
\end{cases}
\eeq
Then  $\pi|_{Z^n}\cong 2^r\theta_\pi$,
since $\rho|_Z =\sgn \oplus \sgn$. 
If $\pi$ is irreducible orthogonal, then $r$ is even by Lemma \ref{orev} and therefore $4$ divides $[\theta_\pi,\pi]$, and other $[\theta,\pi]=0$.

Otherwise $\pi=S(\phi)$ for an irreducible symplectic representation $\phi$.
In this case,  $$[\theta_\pi,S(\phi)]=2^r+2^r=2^{r+1},$$ which is again divisible by $4$ for $r>0$.
\begin{comment}

 Therefore the result holds for OIRs of $Q^n$.

A general orthogonal representation $\pi$ of $Q^n$ is of the form
$$\pi\cong \bigoplus_{j}b_j\pi_j,$$
 where $b_j$ are non-negative integers and each $\pi_j$ is an OIR of $Q^n$ such that $\pi_j|_{Z^n}$ is a non-trivial character $\theta_{\pi_j}$ of $Z^n$ with multiplicity $\ell_j$.  From above, it follows that $4|\ell_j$ for all $j$. 
Therefore $[\theta_{\pi_j},\pi]=b_jl_j$ is divisible by $4$. This proves our claim.

\end{comment}
\end{proof}

\subsection{$S_n$-invariant Representations of Elementary Abelian $2$-groups}\label{eab}

Let $E$ be an elementary abelian $2$-group of rank $n$. View $E$ as an $\mb F_2$-vector space, say with basis $e_1,\hdots,e_n$. For $e\in E,$ put $|e|=\#\{i:c_i=1\}$ when $e=\sum\limits_{i=1}^r c_ie_i$ with $c_{i}\in \mathbb F_2$. Put $\mc O_k=\{e\in E:|e|=k\}.$

Let $E^\vee=\Hom(E,\mathbb{F}_2)$ and consider the basis $v_1, \hdots, v_n$ of $E^\vee$ dual to the $e_i$.  For $v\in E^\vee,$ put $|v|=\#\{i:c_i=1\}$ when $v=\sum\limits_{i=1}^r c_iv_i$ with $c_{i}\in \mathbb F_2$. The representation $$\sigma_k=\bigoplus_{|v|=k} v \quad;\quad k=0,1,\hdots,n$$
 is $S_n$-invariant, of degree $\binom{n}{k}$. 
  For a polynomial $f$, write $[f]_i$ for the coefficient of the degree $i$ term of $f$; in other words so that $f(x)=\sum_i [f]_i x^i$.

\begin{lemma}[\cite{GJgln}, Proposition 2]\label{chisig}
For $e \in \mc O_k$, we have 
\beq
\chi_{\sigma_i}(e)= \left[ (1-x)^k (1+x)^{n-k} \right]_i.
\eeq
\end{lemma}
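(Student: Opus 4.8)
The plan is to package the representations $\sigma_0,\dots,\sigma_n$ into a single generating identity in the representation ring of $E$, and then apply the character homomorphism and read off coefficients.

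\textbf{Step 1 (a generating identity).} First I would note that for a subset $T\subseteq\{1,\dots,n\}$ the tensor product $\bigotimes_{j\in T}v_j$ of the dual-basis linear characters is again a linear character, namely the one attached to $v=\sum_{j\in T}v_j\in E^\vee$, and $|v|=|T|$. Expanding the product over all indices therefore gives
\beq
\prod_{j=1}^n\bigl(1+v_j\,t\bigr)=\sum_{i=0}^n\sigma_i\,t^i ,
\eeq
an identity in $R(E)[t]$, where $R(E)$ denotes the complex representation ring of $E$ and $t$ is a formal variable: the coefficient of $t^i$ on the left is $\bigoplus_{|T|=i}\bigotimes_{j\in T}v_j=\bigoplus_{|v|=i}v=\sigma_i$.

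\textbf{Step 2 (evaluate and compare).} Next I would fix $e\in\mc O_k$, write $e=\sum_{\ell=1}^n c_\ell e_\ell$, and let $S=\{\ell:c_\ell=1\}$, so $|S|=k$. Applying the ring homomorphism ``character value at $e$'' to the identity of Step~1 yields
\beq
\prod_{j=1}^n\bigl(1+v_j(e)\,t\bigr)=\sum_{i=0}^n\chi_{\sigma_i}(e)\,t^i .
\eeq
Since the value of the linear character $v_j$ at $e$ is $(-1)^{c_j}$, which equals $-1$ for $j\in S$ and $+1$ for $j\notin S$, the left-hand side is exactly $(1-t)^k(1+t)^{n-k}$. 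Comparing the coefficient of $t^i$ on the two sides gives $\chi_{\sigma_i}(e)=\bigl[(1-x)^k(1+x)^{n-k}\bigr]_i$, which is the assertion.

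There is essentially no obstacle here; the only point needing a little care is the bookkeeping in Step~1, namely that summing a linear character $v$ over all $v$ with $|v|=i$ is the same as summing over all $i$-element subsets $T\subseteq\{1,\dots,n\}$, and that the product of the corresponding $v_j$ is the character attached to $T$. If one preferred to avoid the generating function altogether, the same conclusion follows by computing directly: for $e$ with support $S$ one has $v(e)=(-1)^{|S\cap T|}$ when $v\leftrightarrow T$, so $\chi_{\sigma_i}(e)=\sum_{|T|=i}(-1)^{|S\cap T|}$, and splitting $T$ according to $|T\cap S|=a$ turns this into $\sum_a(-1)^a\binom{k}{a}\binom{n-k}{i-a}$, which is again $[(1-x)^k(1+x)^{n-k}]_i$. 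Either route reproves the cited result of \cite{GJgln}.
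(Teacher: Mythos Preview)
Your proof is correct. The paper does not actually supply a proof of this lemma; it is stated with a citation to \cite[Proposition 2]{GJgln} and used as a black box. Your generating-function argument in $R(E)[t]$ and the alternative direct computation via $\sum_{a}(-1)^a\binom{k}{a}\binom{n-k}{i-a}$ are both valid and standard ways to see the identity, and either one makes the paper self-contained at this point.
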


Moreover, any $S_n$-invariant representation $\sigma$ of $E$  is a direct sum of $\sigma_k$'s:
\begin{equation}\label{pisig}\sigma=\bigoplus_{k=0}^n m_{k}(\sigma)\sigma_k\end{equation}
where $m_k(\sigma)$ are certain non-negative integers.
 The coefficients $m_{k}(\sigma)$  can be expressed in terms of character values of $\sigma$ at the elements of $E$.

 For $0 \leq k \leq n$, put
$\vartheta_k=e_1+\hdots+e_k$. (In particular, $\vartheta_0=0$.)
Then,
\begin{equation}\label{mk}m_k(\sigma)=\frac{1}{2^{n}}\sum\limits_{i=0}^n\chi_{\sigma_i}(\vartheta_k)\chi_\sigma(\vartheta_i).\end{equation}
From Lemma \ref{chisig}, the character value $\chi_{\sigma_i}(\vartheta_k)$ is the coefficient of $x^i$ in the expression $(1-x)^k(1+x)^{n-k}$. Following \cite[Theorem 7]{GJgln} we have 
\begin{equation} \label{big.old.prod}
w(\sigma) = \prod\limits_{k=1}^n\Big(\prod\limits_{|v|=k }(1+ v)\Big)^{m_k(\sigma)}.
\end{equation}

 For later use, we state the following lemma; its proof is immediate.
 
 \begin{lemma} \label{msdeed}  Let  $\sigma$ be an $S_n$-invariant representation of $E$ such that 
$\chi_\sigma(\vartheta_i)=\chi_\sigma(\vartheta_{n-i})$ for all  $0\leq i\leq n$. Then, we have
\beq
m_k(\sigma)=\begin{cases}
0 & \text{ when } k \text{ is odd}\\[5pt]
\cfrac{1}{2^{n-1}}\sum\limits_{i=0}^{\frac {n-1}2}\chi_{\sigma_i}(\vartheta_k)\chi_\sigma(\vartheta_i) & \text{ when } k \text{ is even, } n \text{ is odd}\\[15pt]

\cfrac{1}{2^{n-1}}\sum\limits_{i=0}^{\frac {n-2}2}\chi_{\sigma_i}(\vartheta_k)\chi_\sigma(\vartheta_i)+\cfrac{1}{2^n}\big(\chi_{\sigma_{\frac n2}}(\vartheta_k)\chi_\sigma(\vartheta_{\frac n2})\big) & \text{ when } k,n \text{ both  are even}
\end{cases}
\eeq
 for $1\leq k\leq n.$
\end{lemma}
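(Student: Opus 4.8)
The plan is to start from the explicit formula \eqref{mk} for $m_k(\sigma)$ and exploit a palindrome-type symmetry of the coefficients $\chi_{\sigma_i}(\vartheta_k)$ together with the hypothesis on $\sigma$. By Lemma \ref{chisig}, $\chi_{\sigma_i}(\vartheta_k)$ is the coefficient of $x^i$ in $f_k(x) := (1-x)^k(1+x)^{n-k}$, a polynomial of degree $n$. First I would record the functional equation $x^n f_k(1/x) = (x-1)^k(x+1)^{n-k} = (-1)^k f_k(x)$, which shows that the coefficient of $x^{n-i}$ in $f_k$ is $(-1)^k$ times the coefficient of $x^i$; that is,
\[
\chi_{\sigma_{n-i}}(\vartheta_k) = (-1)^k\,\chi_{\sigma_i}(\vartheta_k)\qquad\text{for all }0\le i\le n.
\]

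Next I would pair the terms indexed by $i$ and $n-i$ in the sum \eqref{mk}. Using the hypothesis $\chi_\sigma(\vartheta_i)=\chi_\sigma(\vartheta_{n-i})$ together with the displayed identity, the $(n-i)$-term equals $(-1)^k$ times the $i$-term. Hence when $k$ is odd each such pair cancels, and the possible self-paired middle term at $i=n/2$ (occurring only when $n$ is even) also vanishes, since it satisfies $\chi_{\sigma_{n/2}}(\vartheta_k)=(-1)^k\chi_{\sigma_{n/2}}(\vartheta_k)$; this yields $m_k(\sigma)=0$ for $k$ odd, for either parity of $n$. When $k$ is even, each pair contributes twice the $i$-term, so the sum collapses to a sum over $i=0,\ldots,\lfloor n/2\rfloor$. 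Dividing by $2^n$ and distinguishing whether the middle index $n/2$ occurs — it does precisely when $n$ is even, and then with weight $1/2^n$ rather than $1/2^{n-1}$ because it is its own partner — reproduces exactly the two even-$k$ cases in the statement.

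The only point requiring any care is this bookkeeping of the self-paired middle term when $n$ is even; it is the sole source of the asymmetry between the second and third cases. Everything else is a routine reindexing, which is why the text can record the proof as immediate.
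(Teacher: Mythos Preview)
Your argument is correct and is exactly the approach implicit in the paper's ``immediate'' proof: you use Lemma~\ref{chisig} to obtain the palindrome identity $\chi_{\sigma_{n-i}}(\vartheta_k)=(-1)^k\chi_{\sigma_i}(\vartheta_k)$, combine it with the hypothesis $\chi_\sigma(\vartheta_i)=\chi_\sigma(\vartheta_{n-i})$, and pair terms in \eqref{mk}. The only delicate point is the self-paired middle term when $n$ is even, which you handle correctly.
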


\subsection{Dickson Invariants}\label{dickin}

With $E$ as the elementary abelian $2$-group above, write  $\Sym(E^\vee)$ for the symmetric algebra of $E^\vee$ over $\mb F_2$.
 In this algebra, we have an element
\beq
\mc D(E)= \prod_{v \in E^\vee}(1+v)=1+\sum_{i=1}^n d_{i}(E),
\eeq
for certain $\GL(E)$-invariant polynomials $d_i$ of degree $2^n-2^{n-i}$, known as \emph{Dickson invariants}. 
 Note that $w(\reg(E))=\mathcal{D}(E)$, where $\reg(E)$ is the regular representation (as mentioned in \cite{wilk}).
 
Certain factors of $\mc D(E)$ enter into our work, which we explain here. Consider the basis
$
{\bm v}=\{v_1, \ldots, v_n\}$
of $E^\vee$. Identifying $E^\vee$ with $\hc^1(E)$, we may write
$$\hc^*(E)=\Sym(E^\vee)\cong \mathbb F_2[v_1,\hdots,v_n].$$
  
Put
\beq
\mc D^{[k]}({\bm v})= \prod_{|v|=k} (1+v) \in \Sym(E^\vee),
\eeq
%\mc D^{[k]}(E)= \prod_{\vt \in \mc O_k^*} (1+\vt) \in \Sym(E^\vee).
 so that $\mc D(E)=\prod_{k} \mc D^{[k]}({\bm v})$. Clearly, $\mc D^{[k]}({\bm v})$ is a symmetric polynomial. We have
\beq
\mc D^{[n]}({\bm v})=1+\mc E_1 \quad \quad \text{ and } \quad \quad \mc D^{[1]}({\bm v})=1+\mc E_1+ \mc E_2+ \ldots + \mc E_n,
\eeq

  For $n=4$, we have
\beq
\begin{split}
\mc D^{[2]}({\bm v})&=1+ \mc E_1+ \mc E_1^2 + \mc E_1^3+( \mc E_2^2+\mc E_1\mc E_3)+(\mc E_1\mc E_2^2+\mc E_1^2\mc E_3)+(\mc E_1\mc E_2\mc E_3+\mc E_3^2+\mc E_1^2\mc E_4) \text{ and }\\
\mc D^{[3]}({\bm v})&=1+ \mc E_1+ (\mc E_1^2 + \mc E_2)+( \mc E_3+ \mc E_1^3)+( \mc E_1^2\mc E_2+ \mc E_1 \mc E_3+\mc E_4).
\end{split}
\eeq

Thus we can rewrite \eqref{big.old.prod} as:
 \begin{prop}\label{wsigma}
 For an $S_n$-invariant representation $\sigma $ of $E$, we have
\beq
w(\sigma)=  \prod\limits_{k=1}^n\mc D^{[k]}({\bm v})^{m_{k}(\sigma)}
\eeq
where $m_k(\sigma)$ is given by \eqref{mk}.
\end{prop}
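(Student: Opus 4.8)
The plan is to recognize Proposition~\ref{wsigma} as a repackaging of the product formula \eqref{big.old.prod} in terms of the factors $\mc D^{[k]}({\bm v})$ of the Dickson element. Concretely, I would start from the isotypic decomposition \eqref{pisig}, $\sigma = \bigoplus_{k=0}^n m_k(\sigma)\sigma_k$, and invoke the multiplicativity of the total Stiefel--Whitney class on direct sums (the Whitney product formula), which gives $w(\sigma) = \prod_{k=0}^n w(\sigma_k)^{m_k(\sigma)}$. Since $\sigma_0$ is the trivial representation, $w(\sigma_0)=1$, so the product runs effectively over $1\le k\le n$.

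Next I would identify $w(\sigma_k)$. By definition $\sigma_k = \bigoplus_{|v|=k} v$ is a sum of linear characters $v \in E^\vee$; each such $v$ is an orthogonal representation of degree $1$ whose first SWC is $v$ itself under the identification $\hc^1(E)\cong E^\vee$, so $w(v)=1+v$. Another application of the Whitney product formula yields $w(\sigma_k)=\prod_{|v|=k}(1+v)=\mc D^{[k]}({\bm v})$, which is precisely the definition recorded just above the proposition. Substituting this into the expression for $w(\sigma)$ gives $w(\sigma)=\prod_{k=1}^n(\mc D^{[k]}({\bm v}))^{m_k(\sigma)}$, as claimed.

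The only ingredient that is not purely formal is the identity \eqref{mk} for the multiplicities $m_k(\sigma)$, but this is already established in the text: it follows from the fact that an $S_n$-invariant class function on $E$ is determined by its values on the orbit representatives $\vartheta_0,\dots,\vartheta_n$, together with Lemma~\ref{chisig} identifying $\chi_{\sigma_i}(\vartheta_k)$ as the coefficient of $x^i$ in $(1-x)^k(1+x)^{n-k}$, and a short orthogonality/inversion computation on this system. Consequently I do not expect a genuine obstacle here: the proposition follows from \eqref{big.old.prod} and the definition of $\mc D^{[k]}({\bm v})$ by direct substitution, and the ``work'' is entirely contained in material the excerpt has already put in place.
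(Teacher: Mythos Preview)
Your proposal is correct and matches the paper's approach: the paper presents Proposition~\ref{wsigma} simply as a rewriting of \eqref{big.old.prod} using the definition $\mc D^{[k]}({\bm v})=\prod_{|v|=k}(1+v)$, with \eqref{big.old.prod} itself quoted from \cite{GJgln}. You have additionally spelled out the derivation of \eqref{big.old.prod} via the Whitney product formula and the decomposition \eqref{pisig}, which is exactly the argument underlying that cited result.
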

 
 \section{Symplectic Groups}\label{ch5}
  
  \subsection{Subgroups of $G$} \label{subgroup.sec}
  Let $\mathcal{S}=\SL(2,q)$. Let $Z=\{ \pm {1}\}$ be the center of $\mc S$.
  Set 
\begin{small}$$J=\begin{pmatrix}
0&&0&&\hdots&&0&&1\\
0&&0&&\hdots&&-1&&0\\
\vdots&&\vdots&&\iddots&&\vdots&&\vdots\\
0&&1&&\hdots&&0&&0\\
-1&&0&&\hdots&&0&&0\\
\end{pmatrix},$$
\end{small}
and put
%Upon fixing the basis, we can think of $\Sp(V)$ as the group of $2n\times 2n$ symplectic matrices over $\f_q$ denoted by $\Sp(2n,q).$ That is:
$$G=\Sp(2n,q)=\{A\in \GL(2n,q): A^tJA=J\}.$$ 
 Let $X$ be the subgroup of matrices in $G$ of the form
\beq
A= \begin{pmatrix}
a_{n}&0&0& \hdots&\hdots&0&0&b_{n}\\
0&\ddots&& &&&\iddots&0\\
&&a_2&&&b_2&&\\
\vdots&&&a_1 & b_1 &&&\vdots\\
\vdots&&&c_1 &d_1&&&\vdots\\
&&c_2&&&d_2&&\\
0&\iddots&&&&&\ddots&0\\
c_n&0&0&  \hdots &\hdots&0&0&d_n\\
\end{pmatrix},
\eeq
meaning the nonzero entries of $A$ lie either on the diagonal or the antidiagonal. 
Let $Z_X$ be the center of $X$; it is the subgroup of diagonal matrices in $G$ which have $1$ or $-1$ on the diagonal.

Note that each $A_i: =\begin{pmatrix} a_i &b_i \\ c_i &d_i  \end{pmatrix} \in \mc S$, and
$A \mapsto (A_1, \ldots, A_n)$ maps
$X$ isomorphically to the $n$-fold product $\mc S^n$. It also maps $Z_X$ to $Z^n$.
 
Let $M$ be the subgroup of diagonal $n \times n$ block matrices of $G$; then 
\beq
\begin{pmatrix} A &0 \\ 0 &B  \end{pmatrix} \mapsto A
\eeq
maps $M$ isomorphically to $\GL(n,q)$. The subgroup of $n \times n$ permutation matrices in $\GL(n,q)$ thus gives a copy of $S_n$ in $G$; this subgroup normalizes $X$, and acts by conjugation on $Z_X < X$ by the given permutation.

\subsection{Detecting SWCs}

In this Section, we prove Theorem \ref{dtsym}. Our strategy is to combine the facts, reviewed below, that $X$ is a detecting subgroup of $G$, and that $Z$ detects the SWCs of $\mc S$. Along the way, we must determine $\hc^*_{\SW}(\mc S^n)$.

  It is well known (see \cite[Chapter VI, Sec. 5]{FP} for instance) that the mod $2$ cohomology ring of $\mc S$ is  %Chapter VI, Sec. 5]\label{crsl2} as
%From \cite{FP}, Chapter VI, Sec. 5]\label{crsl2}, the mod $2$ cohomology ring of $\mathcal{S}$ is
$$\hc^*(\mathcal S)\cong \mathbb F_2[\mathfrak{e}]\otimes \mathbb F_2[\mathfrak{b}]/\langle \mathfrak{b}^2\rangle$$
with $\deg(\mathfrak{b})=3$, $\deg(\mathfrak{e})=4$. 

In \cite[Corollary 4.8]{NSSL2}, we specified an orthogonal representation $\eta$ of $\mc S$ with the property that $w_4(\eta)=\mf e$.
We also studied the restriction map   $\hc^*(\mc S) \to H^*(Z)$ and computed that $\mathfrak e$ maps to $v^4 \in \hc^4(Z)$.  This gives an isomorphism 
\begin{equation} \label{alphabet}
\hc^4(\mc S) \overset{\sim}{\to} \hc^4(Z), 
\end{equation}
and leads to:
\begin{thm}[\cite{NSSL2}] \label{prop} The center $Z$ detects SWCs of $\mc S$. We have $\hc ^*_{\SW}(\s)=\mathbb F_2[\mf e]$.
\end{thm}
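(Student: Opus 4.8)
The plan is to prove both assertions at once by pinning down $\hc^*_{\SW}(\s)$ exactly, using only the known ring structure of $\hc^*(\s)$, the Corollary to Proposition \ref{Wu}, and the representation $\eta$ of \cite[Corollary 4.8]{NSSL2} with $w_4(\eta) = \mf e$.

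First I would record that every orthogonal representation $\pi$ of $\s$ satisfies $w_1(\pi) = w_2(\pi) = 0$: these classes lie in $\hc^1(\s)$ and $\hc^2(\s)$, and both groups vanish, since in the presentation $\hc^*(\s) \cong \mathbb F_2[\mf e] \otimes \mathbb F_2[\fb]/\langle \fb^2 \rangle$ (with $\deg \fb = 3$, $\deg \mf e = 4$) every nonzero homogeneous component sits in a degree $\equiv 0$ or $3 \pmod 4$. By the Corollary to Proposition \ref{Wu}, each nonzero SWC $w_i(\pi)$ then has $4 \mid i$. As $\hc^*_{\SW}(\s)$ is by definition the subalgebra generated by such classes, it lies inside the span of the homogeneous elements of $\hc^*(\s)$ of degree divisible by $4$; and inspecting the presentation once more, a monomial $\fb^{\varepsilon}\mf e^{k}$ has degree $3\varepsilon + 4k$, divisible by $4$ only for $\varepsilon = 0$. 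Hence that span is exactly $\mathbb F_2[\mf e]$, so $\hc^*_{\SW}(\s) \subseteq \mathbb F_2[\mf e]$.

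For the reverse inclusion, $\eta$ furnishes $\mf e = w_4(\eta) \in \hc^*_{\SW}(\s)$, and since $\hc^*_{\SW}(\s)$ is a subalgebra it must contain all of $\mathbb F_2[\mf e]$; thus $\hc^*_{\SW}(\s) = \mathbb F_2[\mf e]$. Detection is then immediate: the restriction $i^*\colon \hc^*(\s) \to \hc^*(Z) = \mathbb F_2[v]$ is a ring homomorphism sending $\mf e \mapsto v^4$ (this is \eqref{alphabet}), so it carries $\mathbb F_2[\mf e] = \hc^*_{\SW}(\s)$ isomorphically onto $\mathbb F_2[v^4] \subseteq \hc^*(Z)$; in particular $i^*$ is injective on $\hc^*_{\SW}(\s)$, i.e.\ $Z$ detects the SWCs of $\s$.

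The only genuinely nontrivial ingredient, namely that some orthogonal $\eta$ realizes $w_4(\eta) = \mf e$ (equivalently, that the degree-$4$ SWCs already fill $\hc^4(\s)$), is imported from \cite{NSSL2} and not reproved here; granting that, the remaining work is just a degree bookkeeping in the two polynomial rings in sight, so I do not expect a real obstacle.
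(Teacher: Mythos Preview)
Your proof is correct and follows precisely the route the paper sets up. Note that the paper does not actually give its own proof of this theorem: it is imported from \cite{NSSL2}, with the preceding paragraph (the ring structure of $\hc^*(\s)$, the class $\mf e = w_4(\eta)$, and \eqref{alphabet}) serving only as context that ``leads to'' the statement. Your argument simply makes explicit what that paragraph leaves implicit, using exactly the tools the paper provides (the degree pattern of $\hc^*(\s)$, the Corollary to Proposition~\ref{Wu}, the representation $\eta$, and the restriction $\mf e \mapsto v^4$), so there is nothing to compare.
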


 From above, consider the subgroup $X$ of $G=\Sp(2n,q)$, isomorphic to $\mathcal{S}^n$.
 There are projections
$$\text{pr}_j:\mathcal{S}^n\to \mathcal{S}\quad \text{ ; }\quad1\leq j\leq n,$$ and by  K\"{u}nneth we have
\begin{equation}\label{kf}\hc^*(X)\cong \hc^*(\mc S^n)\cong \mathbb F_2[\mathfrak{e}_1,\hdots,\mathfrak{e}_n]\otimes_{\mathbb F_2} \mathbb F_2[\mathfrak{b}_1,\hdots,\mathfrak{b}_n]/( \mathfrak{b}_1^2, \ldots, \mathfrak{b}_n^2),\end{equation}
where $\mathfrak{e}_j=\text{pr}^*_j(\mathfrak{e})$ and $\mathfrak{b}_j=\text{pr}^*_j(\mathfrak{b})$.   Note that $\mathfrak{e}_j=w_4(\eta_j)$ with $\eta_j=\eta\circ\pr_j$ for each $j$.  

\begin{lemma} [\cite{Milgram}, Chapter VII, Lemma 6.2]\label{spsl2}
The subgroup $X$ detects the mod $2$ cohomology of $G$. 
\end{lemma}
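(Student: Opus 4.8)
The naive transfer argument does not apply here: for $n\geq 2$ the subgroup $X\cong\mc S^n$ does not contain a Sylow $2$-subgroup of $G$. Indeed, a Sylow $2$-subgroup of $\mc S=\SL(2,q)$ is generalized quaternion of order $(q^2-1)_2$, and a lifting-the-exponent computation yields $\lvert G\rvert_2=(n!)_2\cdot\lvert\mc S\rvert_2^{\,n}=(n!)_2\cdot\lvert X\rvert_2$. So the plan is to use the structure of $G$ as a group of Lie type: first control $\hc^*(G)$ modulo nilpotents via the elementary abelian $2$-subgroups, and then kill the nilpotent part by an induction on $n$ through centralizers of involutions.

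The first step is to show that every elementary abelian $2$-subgroup $E$ of $G$ is $G$-conjugate into $Z_X\leq X$. Since the elements of $E$ satisfy $x^2=1$ with $q$ odd, they are simultaneously diagonalizable over $\mathbb F_q$ with eigenvalues $\pm1$; the common eigenspaces are mutually orthogonal for the symplectic form, hence each is non-degenerate, hence splits into hyperbolic planes on each of which $E$ acts by a scalar; Witt's theorem then conjugates an adapted symplectic basis to the standard one, carrying $E$ into $Z_X$. By Quillen's $F$-isomorphism theorem it follows that the kernel of $\hc^*(G)\to\hc^*(X)$ is a nilpotent ideal, and the induced map $\hc^*(G)_{\mathrm{red}}\to\hc^*(X)_{\mathrm{red}}=\mathbb F_2[\mathfrak e_1,\ldots,\mathfrak e_n]$ is injective (compare Theorem \ref{dtsym}). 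What remains is to show this nilpotent ideal vanishes.

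For that I would induct on $n$; the case $n=1$ is trivial since then $X=G$. For $0<a<n$ the involution $g_a\in Z_X$ has centralizer $C_G(g_a)\cong\Sp(2a,q)\times\Sp(2(n-a),q)$, and inside it $X$ appears as $\SL(2,q)^a\times\SL(2,q)^{n-a}$; by the inductive hypothesis applied to each factor together with the K\"unneth formula, $X$ detects $\hc^*(C_G(g_a))$. If $n$ is not a power of $2$, choose $a$ to be a proper nonzero binary submask of $n$, so that $\binom na$ is odd and hence $[G:C_G(g_a)]$ is odd; then $\hc^*(G)\hookrightarrow\hc^*(C_G(g_a))$ by transfer, and composing with the previous injection finishes the inductive step. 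The remaining --- and main --- obstacle is the case $n=2^j$ (beginning with $\Sp(4,q)$, where $C_G(g_1)$ already equals all of $X$), in which no such submask exists: here the difficulty is precisely the central involution $-\mathbbm1$, which is the one nontrivial elementary abelian $E$ with $C_G(E)=G$, so the Jackowski--McClure centralizer homology decomposition expresses $\hc^*(G;\mathbb F_2)$ as an inverse limit of the $\hc^*(C_G(E);\mathbb F_2)$ but leaves this one term unresolved. I would dispose of it either by invoking the known computation of $\hc^*(\Sp(2n,q);\mathbb F_2)$ via Brauer lifts (Quillen; Fiedorowicz--Priddy), or by comparing Lyndon--Hochschild--Serre spectral sequences for the central extension $\langle-\mathbbm1\rangle\to\Sp(2n,q)\to\mathrm{PSp}(2n,q)$ with its analogue for $X$.
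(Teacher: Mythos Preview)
The paper does not give its own proof of this lemma; it simply cites \cite{Milgram}, Chapter VII, Lemma 6.2, and uses the result as a black box. So there is no paper-proof to compare against, only the question of whether your sketch stands on its own.

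Your reduction is sound up to a point: the conjugacy of elementary abelian $2$-subgroups into $Z_X$ is correct, Quillen's $F$-isomorphism does give injectivity modulo nilpotents, and the odd-index transfer through $C_G(g_a)$ when $\binom{n}{a}$ is odd (equivalently, when $a$ is a proper binary submask of $n$) is a clean inductive step for $n$ not a $2$-power. The index computation $[G:C_G(g_a)]_2=\binom{n}{a}_2$ follows from your own formula $|G|_2=(n!)_2\,|\mc S|_2^{\,n}$, so that part is fine.

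The genuine gap is exactly where you flag it: the case $n=2^j$. Here you do not give an argument, only two options. Invoking ``the known computation of $\hc^*(\Sp(2n,q);\mathbb F_2)$'' is close to circular, since in the cited literature the detection statement is essentially how that computation is obtained (indeed, the reference \cite{Milgram} the paper cites is precisely of this type). The alternative---comparing Lyndon--Hochschild--Serre spectral sequences for the central extensions $\langle -\mathbbm 1\rangle\to G\to\mathrm{PSp}(2n,q)$ and $\langle -\mathbbm 1\rangle^n\to X\to\mathrm{PSL}(2,q)^n$---is not a one-liner: one must control the differentials and the $E_\infty$-pages on both sides, and the centers of the two extensions are not even the same group, so the comparison map is not the obvious one. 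As written, this is a plan rather than a proof. If you want a self-contained argument, the standard route in the literature (Fiedorowicz--Priddy, Adem--Milgram) passes through the wreath-product structure of a Sylow $2$-subgroup $\mc S_2\wr S_n^{(2)}$ and Quillen's detection for wreath products, which handles the $2$-power case directly; absent that, the honest move is to cite the result, as the paper does.
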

From before, the subgroup $Z_X$ of $G$ is normalized by $S_n$ and its mod $2$ cohomology ring is
$$\hc^*(Z_X)\cong \hc^*(C_2^n)\cong \mathbb F_2[v_1,\hdots,v_n]$$
by \eqref{Cnk}.

Consider the case $n=2$. Then by   \eqref{kf}  we have $ \hc^{k}(\mathcal{S}\times \mathcal{S})=0$ when $k \equiv 1 \pmod 4$, and
$ \hc^{k}(\mathcal{S}\times \mathcal{S}) \subseteq  \mathbb F_2[\mathfrak{e}_1,\mathfrak{e}_2]$ when $k \equiv 0 \pmod 4$. We have the following lemma: 
\begin{lemma}\label{l1}
The Steenrod square $\Sq^2:  \hc^{k}(\mathcal{S}\times \mathcal{S}) \to  \hc^{k+2}(\mathcal{S}\times \mathcal{S}) $ is trivial when $k$ is a multiple of $4$. 
\end{lemma}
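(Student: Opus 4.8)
The plan is to reduce the statement to the ring structure of $\hc^*(\mathcal S)$ recalled above, together with functoriality of the Steenrod squares and the Cartan formula, so that no genuinely new input is needed.

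First I would pin down $\hc^k(\mathcal S\times\mathcal S)$ for $k$ a multiple of $4$. As already observed from \eqref{kf}, in such degrees the classes $\mathfrak b_1,\mathfrak b_2$ cannot contribute, since $3(\epsilon_1+\epsilon_2)\equiv 0\pmod 4$ forces $\epsilon_1=\epsilon_2=0$; hence $\hc^k(\mathcal S\times\mathcal S)$ has $\mathbb F_2$-basis the monomials $\mathfrak e_1^a\mathfrak e_2^b$ with $a+b=k/4$. Because $\Sq^2$ is additive, it is enough to prove $\Sq^2(\mathfrak e_1^a\mathfrak e_2^b)=0$ for all $a,b\ge 0$.

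Next I would compute the low squares of $\mathfrak e\in\hc^4(\mathcal S)$. Reading off the ring $\hc^*(\mathcal S)=\mathbb F_2[\mathfrak e]\otimes\mathbb F_2[\mathfrak b]/(\mathfrak b^2)$ with $|\mathfrak b|=3$ and $|\mathfrak e|=4$, one has $\hc^5(\mathcal S)=\hc^6(\mathcal S)=0$, so $\Sq^1\mathfrak e=0$ and $\Sq^2\mathfrak e=0$. (Alternatively, since $\mathfrak e=w_4(\eta)$ for the representation $\eta$ of \cite{NSSL2}, Wu's formula gives $\Sq^1 w_4(\eta)=w_5(\eta)$ and $\Sq^2 w_4(\eta)=w_2(\eta)w_4(\eta)+w_6(\eta)$, all of which vanish for degree reasons.) Pulling back along $\pr_j\colon \mathcal S\times\mathcal S\to\mathcal S$ and using functoriality of $\Sq$, we obtain $\Sq^1\mathfrak e_j=\Sq^2\mathfrak e_j=0$ in $\hc^*(\mathcal S\times\mathcal S)$ for $j=1,2$.

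Finally I would induct on the number of factors via the Cartan formula. The inductive statement is: if $x,y\in\hc^*(\mathcal S\times\mathcal S)$ both satisfy $\Sq^1=0$ and $\Sq^2=0$, then $\Sq^1(xy)=\Sq^1 x\cdot y+x\cdot\Sq^1 y=0$ and $\Sq^2(xy)=\Sq^2 x\cdot y+\Sq^1 x\cdot\Sq^1 y+x\cdot\Sq^2 y=0$. Taking $\mathfrak e_1,\mathfrak e_2$ as the base case and iterating shows that $\Sq^1$ and $\Sq^2$ both vanish on every monomial $\mathfrak e_1^a\mathfrak e_2^b$, which finishes the proof. There is no real obstacle here; the only points needing a little care are confirming the $\mathfrak b_j$-free description of $\hc^k(\mathcal S\times\mathcal S)$ in the first step, and setting up the induction so that the vanishing of $\Sq^1$ and of $\Sq^2$ are carried simultaneously — carrying $\Sq^2$ alone would not close the induction.
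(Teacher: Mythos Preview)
Your proof is correct and follows exactly the same route as the paper's: reduce to monomials in $\mathfrak e_1,\mathfrak e_2$ via the K\"unneth description, note $\Sq^1\mathfrak e=\Sq^2\mathfrak e=0$ by degree considerations in $\hc^*(\mathcal S)$, pull back by functoriality, and finish with Cartan's formula. Your explicit remark that one must carry the vanishing of $\Sq^1$ alongside that of $\Sq^2$ through the induction is a point the paper leaves implicit.
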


\begin{proof} 
By the above, it is enough to show that $\Sq^2$ vanishes on $\mathbb F_2[\mathfrak{e}_1,\mathfrak{e}_2]$.

Since $\hc^5(\mc S)=\hc^6(\mc S)=\{ 0 \}$, we necessarily have $\Sq^1(\fe)=\Sq^2(\fe)=0$. By naturality of $\Sq$ we also have
$\Sq^1(\fe_i)=\Sq^2(\fe_i)=0$ for $i=1,2$. By Cartan's formula \eqref{cartan},  we deduce $\Sq^2(\mathfrak{e}_1^s\mathfrak{e}_2^t)=0$ for all  nonnegative $s,t$.
The conclusion follows. 
\end{proof}

\begin{prop}\label{prop1}
 We have
$$\hc^*_{\SW}(\s\times \s)=\mathbb F_2[\mathfrak{e}_1,\mathfrak{e}_2].$$
\end{prop}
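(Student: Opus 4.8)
The plan is to prove two inclusions. For $\hc^*_{\SW}(\s\times\s)\subseteq\mathbb F_2[\mathfrak e_1,\mathfrak e_2]$, I would take an orthogonal representation $\pi$ of $\s\times\s$ and decompose it as in \eqref{decorth}, so it suffices to treat an OIR $\pi$. By Section \ref{OIRD}, an OIR of $\s\times\s$ is either $\pi_1\boxtimes\pi_2$ with both $\pi_i$ self-dual and an even number symplectic, or $S(\varphi_1\boxtimes\varphi_2)$. Using the Whitney sum formula together with the K\"unneth description \eqref{kf}, $w(\pi)$ lies in the subring generated by $w_k$-classes pulled back along the two projections; since $Z$ detects SWCs of $\s$ and $\hc^*_{\SW}(\s)=\mathbb F_2[\mathfrak e]$ by Theorem \ref{prop}, each $w_k(\pi_i)$ lands in $\mathbb F_2[\mathfrak e]$, hence $\mathfrak{pr}_i^*w_k(\pi_i)\in\mathbb F_2[\mathfrak e_i]$. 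The cross terms in the Whitney product then all lie in $\mathbb F_2[\mathfrak e_1,\mathfrak e_2]$. One must also check the symmetrized case $S(\varphi)$, but $w(S(\varphi))=\kappa(c(\varphi))$ by \eqref{chswc}, and the Chern classes of an external tensor product are again expressible via the Chern classes of the factors, each of which reduces mod $2$ into $\mathbb F_2[\mathfrak e_i]$; alternatively, since $\hc^*_{\SW}$ is generated by SWCs of \emph{all} orthogonal representations and the degree-$1,2,3$ parts of $\hc^*(\s\times\s)$ vanish while the degree-$4$ part is exactly $\mathbb F_2\mathfrak e_1\oplus\mathbb F_2\mathfrak e_2$, any SWC of degree $\le 4$ is a polynomial in $\mathfrak e_1,\mathfrak e_2$; combined with the Corollary to Wu's formula (Proposition \ref{Wu}) and Lemma \ref{l1}, which force higher SWCs of any $\pi$ with $w_1=w_2=0$ to be concentrated in degrees divisible by $4$ and built from $\Sq$-stable classes, one bootstraps to all degrees.

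For the reverse inclusion $\mathbb F_2[\mathfrak e_1,\mathfrak e_2]\subseteq\hc^*_{\SW}(\s\times\s)$, it is enough to realize $\mathfrak e_1$ and $\mathfrak e_2$ themselves as SWCs. Here I would use the representation $\eta$ of $\s$ from \cite[Corollary 4.8]{NSSL2} with $w_4(\eta)=\mathfrak e$: then $\eta_1=\eta\circ\mathfrak{pr}_1$ and $\eta_2=\eta\circ\mathfrak{pr}_2$ are orthogonal representations of $\s\times\s$, and by naturality of SWCs, $w_4(\eta_j)=\mathfrak{pr}_j^*w_4(\eta)=\mathfrak e_j$. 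Since $\mathfrak e_1,\mathfrak e_2$ generate $\mathbb F_2[\mathfrak e_1,\mathfrak e_2]$ as an $\mathbb F_2$-algebra, this inclusion follows, and in fact the paper already records $\mathfrak e_j=w_4(\eta_j)$ right after \eqref{kf}.

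The main obstacle is the upper bound: showing that SWCs of \emph{arbitrary} orthogonal $\pi$ — not just of the building blocks $\eta_j$ — cannot escape $\mathbb F_2[\mathfrak e_1,\mathfrak e_2]$ into the part of $\hc^*(\s\times\s)$ involving the degree-$3$ classes $\mathfrak b_1,\mathfrak b_2$. The clean way is the Steenrod-square argument: $w_1(\pi)$ and $w_2(\pi)$ vanish because $\hc^1(\s\times\s)=\hc^2(\s\times\s)=0$ by \eqref{kf}, so by the Corollary to Wu's formula the nonzero $w_i(\pi)$ occur only in degrees $i\equiv 0\pmod 4$, where by \eqref{kf} the whole cohomology group already sits inside $\mathbb F_2[\mathfrak e_1,\mathfrak e_2]$. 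Thus no Steenrod computation beyond Lemma \ref{l1} is even needed for the statement as phrased — the degree bookkeeping from \eqref{kf} does all the work — and the proposition follows by combining this with the realization of $\mathfrak e_1,\mathfrak e_2$ as $w_4(\eta_j)$.
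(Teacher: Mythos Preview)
Your reverse inclusion is fine and matches the paper: $\mf e_j=w_4(\eta_j)$ gives $\mathbb F_2[\mf e_1,\mf e_2]\subseteq\hc^*_{\SW}(\s\times\s)$.

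The forward inclusion is where the gap is. Your ``clean way'' in the last paragraph rests on the Corollary to Wu's formula, but that Corollary is \emph{not} true as a general statement about orthogonal bundles: the underlying real bundle of the universal $SU(4)$-bundle over $BSU(4)$ has $w_1=w_2=0$ and yet $w_6=c_3\bmod 2\neq 0$. (Equivalently, from Wu with $i=2$, $j=4$ one gets $w_6=\Sq^2 w_4$, and nothing forces $\Sq^2 w_4=0$ in general.) The paper does not actually invoke that Corollary here. It applies Wu's formula directly together with Lemma~\ref{l1}: for $k\equiv 2\pmod 4$, Wu plus $w_1=w_2=0$ gives $w_k(\pi)=\Sq^2 w_{k-2}(\pi)$ up to an odd coefficient, and Lemma~\ref{l1} says $\Sq^2$ vanishes on $\hc^{k-2}(\s\times\s)$ because $k-2\equiv 0\pmod 4$; the case $k\equiv 3\pmod 4$ then follows by applying Wu with $i=1$ and using the previous case. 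Degree bookkeeping from \eqref{kf} handles only $k\equiv 0,1\pmod 4$ (indeed $\hc^k(\s\times\s)=0$ for $k\equiv 1$ and $\hc^k\subseteq\mathbb F_2[\mf e_1,\mf e_2]$ for $k\equiv 0$), but for $k\equiv 2,3\pmod 4$ the cohomology is nonzero (e.g.\ $\mf b_1\mf b_2\in\hc^6$, $\mf b_1\in\hc^3$). So Lemma~\ref{l1} is genuinely the missing ingredient, and your claim that it ``is even not needed'' is the main error.

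Your first paragraph sketches an OIR-by-OIR approach; this is in the spirit of what the paper later does for $\s^n$ in Theorem~\ref{lc}, but as written it also has a hole: when both $\pi_1$ and $\pi_2$ are symplectic, $\pi_1\boxtimes\pi_2$ is orthogonal while the individual $\pi_i$ are not, so ``$w_k(\pi_i)\in\mathbb F_2[\mf e]$'' is meaningless. The paper's treatment of that case in Theorem~\ref{lc} in fact \emph{uses} Proposition~\ref{prop1} as input, so this route cannot replace the Wu/Lemma~\ref{l1} argument for $n=2$.
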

\begin{proof} 
Since $w_4(\eta_i)=\mathfrak{e}_i$ for $i=1,2$,  the right side is contained in the left side.

Now let $\pi$ be an orthogonal representation of $\mathcal S \times \mathcal S$. By \eqref{kf} again it is enough to  see that $w_k(\pi)=0$ whenever $k\equiv2,3$ (mod $4$).

For $k\equiv 2$ (mod $4$), we use Wu's formula (Proposition \ref{Wu}) with $i=2$ and $j=k-2$, giving
\begingroup
\addtolength{\jot}{0.5em}
\begin{equation*}\label{wu1}
\begin{split}
\Sq^2(w_{k-2}(\pi))&\begin{small}={k-5\choose0}w_2(\pi) w_{k-2}(\pi)+{k-4\choose1}w_1(\pi) w_{k-1}(\pi)+{k-3\choose2}w_0(\pi) w_{k}(\pi)\end{small}\\
&=\frac{(k-3)(k-4)}{2}w_{k}(\pi).
\end{split}
\end{equation*}
\endgroup
The second equality holds because $\hc^i(\s\times \s)=\{0\}$ for $i=1,2$. Also $\frac{(k-3)(k-4)}{2}$ is odd for $k\equiv 2$ (mod $4$), and $\Sq^2(w_{k-2}(\pi))=0$ by Lemma \ref{l1}. Hence $w_k(\pi)=0$.

A similar argument works for $k\equiv 3$ (mod $4$): applying  Wu's formula with $i=1$ and $j=k-1$ shows that $w_{k}(\pi)=0$.
\end{proof}

%We can generalize the above result to:
\begin{thm}\label{lc}
We have
$$\hc^*_{\SW}(\s^n)=\mathbb F_2[\mathfrak{e}_1,\ldots,\mathfrak{e}_n].$$
\end{thm}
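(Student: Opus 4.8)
The plan is to generalize the argument of Proposition \ref{prop1} from $n=2$ to arbitrary $n$, using an induction together with the Steenrod square machinery. First, since $\mf e_j = w_4(\eta_j)$ where $\eta_j = \eta \circ \pr_j$ is an orthogonal representation of $\mc S^n$, the subalgebra $\mathbb F_2[\mf e_1,\ldots,\mf e_n]$ is contained in $\hc^*_{\SW}(\mc S^n)$. For the reverse inclusion, let $\pi$ be an orthogonal representation of $\mc S^n$; by the K\"unneth description \eqref{kf}, $\hc^*(\mc S^n)$ is the tensor product of the polynomial algebra $\mathbb F_2[\mf e_1,\ldots,\mf e_n]$ with the exterior-type algebra on the degree-$3$ classes $\mf b_j$. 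The degree-$m$ part of $\hc^*(\mc S^n)$ lands in $\mathbb F_2[\mf e_1,\ldots,\mf e_n]$ exactly when $m$ cannot be written using any $\mf b_j$'s, i.e.\ when the ``defect'' is controlled; concretely, any class not in $\mathbb F_2[\mf e_1,\ldots,\mf e_n]$ involves at least one $\mf b_j$ and hence sits in a degree $\equiv 1,2,3 \pmod 4$ after accounting for the $\mf e$-part. So it suffices to show $w_k(\pi) = 0$ for all $k \not\equiv 0 \pmod 4$, and moreover that $w_k(\pi) \in \mathbb F_2[\mf e_1,\ldots,\mf e_n]$ for $k \equiv 0 \pmod 4$ — the latter requires ruling out monomials like $\mf b_i \mf b_j$ (degree $6$), $\mf b_i\mf b_j\mf b_l \cdot (\text{stuff})$, etc.

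The key technical input is an analogue of Lemma \ref{l1}: the operations $\Sq^1$ and $\Sq^2$ vanish on $\mathbb F_2[\mf e_1,\ldots,\mf e_n]$. This follows exactly as before: $\hc^5(\mc S) = \hc^6(\mc S) = 0$ forces $\Sq^1 \mf e = \Sq^2 \mf e = 0$, hence by naturality $\Sq^1 \mf e_j = \Sq^2 \mf e_j = 0$, and then the Cartan formula \eqref{cartan} propagates this to all monomials in the $\mf e_j$. Then I would run a Wu's-formula argument (Proposition \ref{Wu}) to kill the low-congruence SWCs: proceeding by induction on $k$, one shows that if $w_i(\pi) = 0$ for all $i < k$ with $i \not\equiv 0\pmod 4$ — so that in the Wu formula the surviving terms have coefficients $\binom{k-3}{2}$ (for $i=2$) or $\binom{k-2}{1}$ (for $i=1$) times $w_k(\pi)$ — the vanishing of $\Sq^2$ or $\Sq^1$ on the relevant (already-known-to-be-polynomial-in-$\mf e$) class forces $w_k(\pi) = 0$ whenever $k \equiv 2$ or $3 \pmod 4$. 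One also needs to handle $k \equiv 1 \pmod 4$, which is immediate since $\hc^k(\mc S^n) = 0$ there by \eqref{kf}.

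The main obstacle is the following: for $n \geq 3$, unlike the case $n = 2$, the degrees $\equiv 0 \pmod 4$ do \emph{not} automatically lie in $\mathbb F_2[\mf e_1,\ldots,\mf e_n]$ — for instance $\mf b_1 \mf b_2 \in \hc^6$, $\mf b_1\mf b_2\mf b_3 \mf b_4 \in \hc^{12}$, and products such as $\mf b_1\mf b_2 \mf e_1 \in \hc^{10}$, so one must separately argue that $w_k(\pi)$ has no component involving an even number of $\mf b_j$'s. I expect to handle this by exploiting the detection/restriction structure: the classes $\mf b_j$ restrict trivially to $Z_X \cong C_2^n$ (whose cohomology is polynomial on degree-$1$ generators), so combining the already-established injectivity results — Theorem \ref{prop} that $Z$ detects SWCs of $\mc S$, hence (via K\"unneth and a product argument) $Z^n = Z_X$ detects SWCs of $\mc S^n$ — shows that $w(\pi)$ is determined by its image in $\hc^*(Z_X) = \mathbb F_2[v_1,\ldots,v_n]$, which lies in $\mathbb F_2[v_1^4,\ldots,v_n^4] = \mathbb F_2[\mf e_1,\ldots,\mf e_n]$ (identifying $\mf e_j \leftrightarrow v_j^4$ under restriction, as in \eqref{alphabet}). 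That restriction argument — showing $Z_X$ detects SWCs of $\mc S^n$ from the fact that $Z$ detects SWCs of $\mc S$, which is where one must be careful that products of orthogonal representations and the K\"unneth isomorphism interact correctly with the SWC subalgebra — is the real content, and is likely the step the authors develop in detail.
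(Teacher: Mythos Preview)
Your Wu-formula induction cannot close on its own. The step for $k\equiv 2\pmod 4$ needs $\Sq^2(w_{k-2}(\pi))=0$, which in turn needs $w_{k-2}(\pi)\in\mathbb F_2[\mf e_1,\ldots,\mf e_n]$. For $n\le 3$ this holds for structural reasons, since $\hc^j(\mc S^n)\subseteq\mathbb F_2[\mf e_1,\ldots,\mf e_n]$ whenever $j\equiv 0\pmod4$; but for $n\ge 4$ this fails, e.g.\ $\mf b_1\mf b_2\mf b_3\mf b_4\in\hc^{12}$. So from degree $12$ onward there is nothing to feed the induction. (Incidentally, your claim that $\hc^k(\mc S^n)=0$ for $k\equiv 1\pmod4$ is already false at $n=3$, since $\mf b_1\mf b_2\mf b_3\in\hc^9$; that case is salvageable via Wu with $\Sq^1$, but the $k\equiv 0$ case is not.) Your fallback---that $Z$ detecting SWCs of $\mc S$ forces $Z^n$ to detect SWCs of $\mc S^n$---is circular. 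Theorem~\ref{prop} gives injectivity of restriction only on $\mathbb F_2[\mf e]$, and K\"unneth then gives injectivity only on $\mathbb F_2[\mf e_1,\ldots,\mf e_n]$. To conclude that restriction to $Z^n$ is injective on $\hc^*_{\SW}(\mc S^n)$ you would first need $\hc^*_{\SW}(\mc S^n)\subseteq\mathbb F_2[\mf e_1,\ldots,\mf e_n]$, which is exactly the statement being proved.

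The paper avoids this by arguing at the level of representations rather than cohomology. Every OIR of $\mc S^n$ is, up to the $S_n$-action, either (i) an external tensor product $\varphi_1\boxtimes\cdots\boxtimes\varphi_r\boxtimes\pi_1\boxtimes\cdots\boxtimes\pi_s$ with the $\varphi_i$ symplectic, $r$ even, and the $\pi_j$ orthogonal, or (ii) a symmetrization $S(\pi_1\boxtimes\cdots\boxtimes\pi_n)$. The missing ingredient in your sketch is Corollary~\ref{tengen}: the total SWC (resp.\ Chern class) of an external tensor product lies in the subalgebra generated by the pulled-back classes of the individual factors. For type (i) one groups the symplectic factors in pairs $\varphi_{2i-1}\boxtimes\varphi_{2i}$, each an orthogonal representation of $\mc S\times\mc S$, and then applies Proposition~\ref{prop1} and Theorem~\ref{prop} factor by factor. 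For type (ii) one uses $w(S(\Pi))=\kappa(c(\Pi))$ together with the Chern-class version of Corollary~\ref{tengen}, landing in the subalgebra generated by the $p_j^*w_{2i}(S(\pi_j))$, which by Theorem~\ref{prop} already sits in $\mathbb F_2[\mf e_1,\ldots,\mf e_n]$. Thus the general case reduces to $n=1$ and $n=2$, and the $\mf b$-monomials never have to be confronted directly.
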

\begin{proof}
Again we have $\fe_i=w_4(\eta_i)\in \hc^*_{\SW}(\mc S^n)$, so the left hand side contains the right hand side.
For the other direction, it is enough to show that the SWCs of an OIR of $\s^n$ lie in  $\mathbb F_2[\mathfrak{e}_1,\ldots,\mathfrak{e}_n]$. 
  
 First we treat the case of ``decomposable'' orthogonal representations. For  $0 \leq r \leq n$ with $r$ even, let $\varphi_1, \ldots, \varphi_r$ be symplectic representations of $\s$, and 
 $\pi_1, \ldots, \pi_{s}$ orthogonal representations of $\s$, with $r+s=n$. Let
 \beq
 \Pi=\varphi_1\boxtimes\cdots \boxtimes \varphi_{r} \boxtimes \pi_1\boxtimes \cdots \boxtimes \pi_{s}
 \eeq
 be their external product. Then $\Pi$ is an orthogonal representation of $\s^n$, as in Section \ref{OIRD}. % From Theorem  
  Note that each of
$$\varphi_1\boxtimes\varphi_2\:,\: \hdots , \: \varphi_{r-1}\boxtimes\varphi_{r}$$
 is an orthogonal representation of $\s\times \s$. By Theorem \ref{prop}, Proposition \ref{prop1} and Corollary \ref{tengen}, we deduce
$w(\Pi)\in \mathbb F_2[\mathfrak{e}_1,\ldots,\mathfrak{e}_n]$.

 Next we treat symmetrizations of decomposable representations.
For representations $\pi_1,\hdots,\pi_n$  of $\mc S$, put $\Pi=\pi_1\boxtimes\cdots\boxtimes\pi_n$.
Then, $w(S(\Pi))=\kappa(c(\Pi))$. From Corollary \ref{tengen}, $c(\Pi)$ lies in the subalgebra of $\hc^*(\mathcal S^n,\z)$ generated by the $p_j^*c_i(\pi_j)$. Apply the homomorphism $\kappa$; then Theorem \ref{prop}   gives
 $$w(S(\Pi))\in\mathbb F_2[p_j^*w_{2i}(S(\pi_j))]\subseteq\mathbb F_2[\mathfrak{e}_1,\ldots,\mathfrak{e}_n].$$
 By the classification in Section \ref{OIRD}, up to the permutation action of $S_n$ on $\s^n$,
 each OIR takes one of the forms treated above.  Since $\mathbb F_2[\mathfrak{e}_1,\ldots,\mathfrak{e}_n]$ is stable under this action,   we are done.
\end{proof}

Now we are ready to prove our main detection theorem.
\begin{proof}[Proof of Theorem \ref{dtsym}]
From Equation \eqref{alphabet}, the restriction map $\hc_{\SW}^*(\mc S^n) \to \hc^*(Z^n)$ maps $\mf e_i$ to $v_i^4$; in our coordinates this gives an isomorphism $$\mathbb F_2[\mathfrak{e}_1,\ldots,\mathfrak{e}_n]  \cong \mathbb F_2[v_1^4,\hdots,v_n^4].$$

Now by Lemma \ref{spsl2}, we have 
\beq
\begin{split}
\hc^*_{\SW}(G) &\hookrightarrow \hc^*_{\SW}(X) \\
& \cong \hc^*_{\SW}(\s^n) \\
&=\mathbb F_2[\mathfrak{e}_1,\ldots,\mathfrak{e}_n] \text{ by Theorem \ref{lc}}\\
&\cong\mathbb F_2[v_1^4,\hdots,v_n^4]. \\
\end{split}
\eeq
The image lands in the $S_n$-invariant subalgebra, as in Section \ref{nlizer}.
\end{proof}

\subsection{Product Formula for Total SWCs }
Let $G=\Sp(2n,q)$, and $\pi$ be an orthogonal representation of $G$. To find $w(\pi)$, we work with
$$w(\pi|_{Z_X})\in \mathbb F_2[v_1^4,\hdots,v_n^4]^{S_n}$$ 
\noindent due to Theorem \ref{dtsym}. Since $\pi$ is $S_n$-invariant, we can write $\pi|_{Z_X}\cong \bigoplus\limits_{k=0}^nm_k(\pi)\sigma_k$ (with $\sigma_k$ as in Section \ref{eab}) and its total SWC is described by Proposition \ref{wsigma}. But we can say more about the coefficients $m_k(\pi)$ appearing in $w(\pi|_{Z_X})$ because $\pi|_{Z_X}$ is coming from a representation of the bigger group $G$. 

Let $2^{a+1}$ be the highest power of $2$ which divides $|\mc S|$. Gorenstein in the proof of \cite[Theorem 8.3, Chapter 2]{goren} specifies elements $x,y \in \mc S$ which generate a $2$-Sylow subgroup of $\mc S$. (It is generalized quaternion of order $2^{a+1}$.) One checks that the subgroup $Q$ generated by $y$ and $x^{2^{a-2}}$ is the quaternion group of order $8$. As $X$ is isomorphic to $\mc S^n$, it correspondingly has a subgroup  $Q_X$ isomorphic to $Q^n$, and containing $Z_X$. We thus have a chain of subgroups $Z_X \leq Q_X \leq X \leq G$. Since $Z_X$ detects SWCs of $G$, we   infer that $Q_X$ also detects the SWCs of $G$.

Let $\pi$ be an orthogonal representation of $G$. Clearly $\res^G_{Z_X}\pi=\res^{Q_X}_{Z_X}\res^G_{Q_X}\pi$ and is $S_n$-invariant. Now, Lemma \ref{ORQn} gives all $m_k(\pi)$ are divisible by $4$.

We can now obtain $w(\pi)$ as its image in $\hc^*(X)$ by identifying $v_i^4\in \hc^*(Z_X)$ with $\fe_i\in \hc^*(X)$. For $i=0,1,\hdots,n$, let $$g_i=\diag (\underbrace{1,\hdots,1}_{n-i},\underbrace{-1,\hdots,-1}_{2i},\underbrace{1,\hdots,1}_{n-i})\in G.$$

\begin{thm}\label{SpnSWCs}
The total SWC of an orthogonal representation $\pi$ of $G=\Sp(2n,q)$ is given by  
\begin{equation}
w(\pi)=\prod_{k=1}^n \left( \mc D^{[k]} (\mf e_1,\hdots,\mf e_n) \right)^{m_k(\pi)/4},
\end{equation}
where $m_k(\pi)=\cfrac{1}{2^n}\sum\limits_{i=0}^n\chi_{\sigma_i}(g_k)\chi_\pi(g_i)$.

\end{thm}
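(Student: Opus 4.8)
The plan is to compute $w(\pi)$ through its restriction to $Z_X$, which loses no information by Theorem~\ref{dtsym}. Since $S_n$ normalises $Z_X$ inside $G$ and acts there by permuting the diagonal $\pm1$ entries, the representation $\pi|_{Z_X}$ is carried to itself by $\pi(s)$ for $s\in S_n$, hence is an $S_n$-invariant representation of the elementary abelian group $E:=Z_X$. By \eqref{pisig} I would write $\pi|_{Z_X}\cong\bigoplus_{k=0}^n m_k(\pi)\,\sigma_k$, with $m_k(\pi)$ given by \eqref{mk}; Proposition~\ref{wsigma} then yields
\[
w(\pi|_{Z_X})=\prod_{k=1}^n\bigl(\mc D^{[k]}(\bm v)\bigr)^{m_k(\pi)}\in\hc^*(Z_X)=\mathbb F_2[v_1,\dots,v_n].
\]

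Next I would feed in the divisibility. Factoring $\pi|_{Z_X}=\res^{Q_X}_{Z_X}\bigl(\res^G_{Q_X}\pi\bigr)$ through the quaternionic subgroup $Q_X\cong Q^n$ of the chain $Z_X\le Q_X\le X\le G$, and observing that the multiplicity in $\pi|_{Z_X}$ of any character $v$ with $|v|=k\ge1$ is exactly $m_k(\pi)$, Lemma~\ref{ORQn} gives $4\mid m_k(\pi)$ for $1\le k\le n$. Since $x\mapsto x^4$ is a ring endomorphism of $\mathbb F_2[v_1,\dots,v_n]$, and since $|v|=k$ forces $v=v_{i_1}+\dots+v_{i_k}$, whence $v^4=v_{i_1}^4+\dots+v_{i_k}^4$ again runs over the degree-$k$ elements in the variables $v_i^4$, I get
\[
\bigl(\mc D^{[k]}(\bm v)\bigr)^{m_k(\pi)}=\Bigl(\prod_{|v|=k}(1+v^4)\Bigr)^{m_k(\pi)/4}=\bigl(\mc D^{[k]}(v_1^4,\dots,v_n^4)\bigr)^{m_k(\pi)/4}.
\]
Thus $w(\pi|_{Z_X})\in\mathbb F_2[v_1^4,\dots,v_n^4]$, which by the proof of Theorem~\ref{dtsym} (where $\fe_i\mapsto v_i^4$ under the isomorphism $\hc^*_{\SW}(X)=\mathbb F_2[\fe_1,\dots,\fe_n]\overset{\sim}{\to}\mathbb F_2[v_1^4,\dots,v_n^4]$) is the image of $\prod_{k=1}^n\bigl(\mc D^{[k]}(\fe_1,\dots,\fe_n)\bigr)^{m_k(\pi)/4}$. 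By naturality of $w$, the class $w(\pi)$ restricts along $X\to Z_X$ to $w(\pi|_{Z_X})$; and since $X$ detects $\hc^*_{\SW}(G)$ (Lemma~\ref{spsl2}), this pins down $w(\pi)$ as the displayed product.

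Finally I would rewrite \eqref{mk} in terms of character values of $\pi$ itself: under $Z_X\cong Z^n$ (Section~\ref{subgroup.sec}) and $Z^n\cong C_2^n$, the element $\vartheta_k=e_1+\dots+e_k$ corresponds to $g_k=\diag(1,\dots,1,-1,\dots,-1,1,\dots,1)$ with $-1$ of multiplicity $2k$, so $\chi_\pi(\vartheta_i)=\chi_\pi(g_i)$ and $\chi_{\sigma_i}(\vartheta_k)=\chi_{\sigma_i}(g_k)$, giving $m_k(\pi)=\tfrac{1}{2^n}\sum_{i=0}^n\chi_{\sigma_i}(g_k)\chi_\pi(g_i)$. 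The main obstacle --- indeed the only substantive step --- is the divisibility $4\mid m_k(\pi)$: without it the claimed formula would not even define an element of $\hc^*(X)$, and it is precisely here that one uses that $\pi$ arises from the large group $G$ and not merely from $Z_X$, via the quaternion subgroup $Q_X$ and Lemma~\ref{ORQn}. The remaining ingredients --- the Frobenius identity $(\mc D^{[k]}(\bm v))^4=\mc D^{[k]}(v_1^4,\dots,v_n^4)$, the matching of $g_k$ with $\vartheta_k$, and naturality of $w$ --- are routine.
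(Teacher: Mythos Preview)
Your proposal is correct and follows essentially the same route as the paper: restrict to $Z_X$ via Theorem~\ref{dtsym}, decompose the $S_n$-invariant restriction as $\bigoplus m_k(\pi)\sigma_k$, apply Proposition~\ref{wsigma}, factor through $Q_X\cong Q^n$ to invoke Lemma~\ref{ORQn} for $4\mid m_k(\pi)$, and then identify $v_i^4\leftrightarrow\fe_i$. You are in fact more explicit than the paper about the Frobenius step $(\mc D^{[k]}(\bm v))^4=\mc D^{[k]}(v_1^4,\dots,v_n^4)$, which the paper leaves implicit in the phrase ``identifying $v_i^4$ with $\fe_i$''.
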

Note that the character values $\chi_{\sigma_i}(g_k)$ are given by Lemma \ref{chisig}.

 \begin{ex} For the regular representation $\reg(G)$, we have

$$w(\reg(G))=\mathcal{D}(\fe_1,\hdots,\fe_n)^{|G|/2^{n+2}}.$$ 

\end{ex}

\subsection{Gow's Formula}
%A result by R. Gow says:
%When $\pi$ is irreducible and self-dual, 
In this section, let $G=\Sp(2n,q)$.

\begin{thm}[\cite{gow1985real}, Theorem 1]\label{cen}
 Let $\pi$ be an irreducible self-dual representation of $G$ with central character $\omega_\pi$ and Frobenius-Schur Indicator $\varepsilon(\pi)$. Then, we have
\begin{equation}\label{gowf}
\varepsilon(\pi)=\omega_\pi(-\mathbbm1).
\end{equation}
\end{thm}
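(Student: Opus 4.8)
Since Theorem~\ref{cen} is Gow's, a proof amounts to recalling the mechanism behind it, and I would organise it around the Frobenius--Schur indicator. Recall that for every $\rho\in\irr(G)$ one has $\varepsilon(\rho)=\tfrac1{|G|}\sum_{g\in G}\chi_\rho(g^2)$, and dually that the number of square roots of a fixed $h\in G$ is
\[
\#\{x\in G:x^2=h\}=\sum_{\rho\in\irr(G)}\varepsilon(\rho)\,\chi_\rho(h).
\]
Because $-\mathbbm 1$ is central we have $\chi_\rho(-h)=\omega_\rho(-\mathbbm 1)\,\chi_\rho(h)$, so substituting $-h$ for $h$ gives $\#\{x\in G:x^2=-h\}=\sum_\rho\varepsilon(\rho)\,\omega_\rho(-\mathbbm 1)\,\chi_\rho(h)$. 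The assertion $\varepsilon(\rho)=\omega_\rho(-\mathbbm 1)$ for self-dual $\rho$ is the same as $\varepsilon(\rho)\,\omega_\rho(-\mathbbm 1)=[\rho\cong\rho^\vee]$ for \emph{all} $\rho\in\irr(G)$ (both sides vanish when $\rho\not\cong\rho^\vee$), and by linear independence of irreducible characters this is in turn equivalent to the single identity of class functions
\[
\#\{x\in G:x^2=-h\}=\sum_{\rho\cong\rho^\vee}\chi_\rho(h)\qquad(h\in G).
\]
So the whole statement is converted into a purely group-theoretic count of square roots of $-h$ in $\Sp(2n,q)$, and the appearance of $-\mathbbm 1$ in the theorem is exactly the substitution $h\mapsto -h$ here.

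The remaining, genuinely $\Sp$-specific, input I would bring in has two strands. First, $-\mathbbm 1=J^2$ with $J\in G$, and restriction to the diagonally embedded $\Sp(2,q)=\SL(2,q)\le X\le G$, whose centre is exactly $\{\pm\mathbbm 1\}$, lets the $n=1$ case --- the cited result for $\SL(2,q)$ --- be fed into an induction on $n$. Second, and more substantially, one needs Wonenburger's and Wall's analysis of symplectic conjugacy classes: the description of when an element of $\Sp(2n,q)$ is conjugate --- inside $\Sp$, or inside the similitude group $\GSp(2n,q)$ --- to its inverse or to the negative of its inverse, together with the fact that centralizers in $\Sp(2n,q)$ are built from smaller symplectic, linear and unitary groups. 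Using Clifford theory relative to $\GSp(2n,q)\rhd G$ to reduce to these centralizers, one evaluates both sides of the displayed identity on a set of class representatives and checks they agree, the $-\mathbbm 1$ twist being absorbed into the forms-theoretic bookkeeping.

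The step I expect to be the real obstacle is precisely this last one: the conjugacy-class/quadratic-forms computation in $\Sp(2n,q)$, where the specific arithmetic of the group and the distinguished role of $-\mathbbm 1$ are used in an essential way; everything before it is formal character theory. Since Theorem~\ref{cen} is quoted from \cite{gow1985real}, I would write out only the Frobenius--Schur reduction and refer to \cite{gow1985real} for the count.
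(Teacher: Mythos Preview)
The paper does not prove Theorem~\ref{cen} at all: it is stated with attribution to \cite{gow1985real} and immediately used, with the only comment being ``We simply call this equality \emph{Gow's formula}.'' So there is no argument in the paper to compare your proposal against; your sketch already goes well beyond what the authors provide.

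That said, a brief remark on your outline. Your Frobenius--Schur reduction is correct: the statement is equivalent to the class-function identity $\#\{x\in G:x^2=-h\}=\sum_{\rho\cong\rho^\vee}\chi_\rho(h)$, exactly as you wrote. Be careful, though, not to conflate this with the stronger (and false for $\Sp$) statement $\#\{x^2=h\}=\#\{x^2=-h\}$, which would force $\omega_\rho(-\mathbbm1)=1$ for every self-dual $\rho$. Your plan to carry out the remaining count via Wall's description of symplectic conjugacy classes and Clifford theory through $\GSp$ is in the right spirit, but since the paper itself is content to cite Gow, the appropriate thing here is simply to cite \cite{gow1985real} and move on, as the authors do.
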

We simply call this equality \emph{Gow's formula.}
 For irreducible orthogonal $\pi$ of $G$, it means $\chi_\pi(\mathbbm{1})=\chi_\pi(-\mathbbm{1})$.
 This is same as $\chi_\pi(g_0)=\chi_\pi(g_n)$, which leads to
\begin{equation}\label{e1}\chi_\pi(g_i)=\chi_\pi(g_{n-i})\quad;\quad 0\leq i\leq n.\end{equation}
Now, Lemma \ref{msdeed} simplifies the exponents $m_k(\pi)$ in Theorem \ref{SpnSWCs} when $\pi$ is either irreducible orthogonal, or the symmetrization of an irreducible symplectic:
\begin{cor}\label{Spnio}
Let $\pi$ be an irreducible orthogonal representation of $G$. Then the total SWC of $\pi$ is
%Due to the above Lemma, the total SWC of an orthogonal representation $\pi$ of $G$ becomes
\beq
w(\pi)=\prod_{k=1}^n\Big(\mc D^{[k]} (\mf e_1,\hdots,\fe_n)\Big)^{m_{k}(\pi)/4}  
\eeq
where 
\beq
m_{k}(\pi)=\begin{cases}
0 & \text{ when } k \text{ is odd}\\
\cfrac{1}{2^{n-1}}\sum\limits_{i=0}^{\frac {n-1}2}\chi_{\sigma_i}(g_k)\chi_\pi(g_i) & \text{ when } k \text{ is even, } n \text{ is odd}\\[10pt]
\cfrac{1}{2^{n-1}}\sum\limits_{i=0}^{\frac {n-2}2}\chi_{\sigma_i}(g_k)\chi_\pi(g_i)+\cfrac{1}{2^{n}}\big(\chi_{\sigma_{\frac n2}}(g_k)\chi_\pi(g_{\frac n2})\big) & \text{ when } k,n \text{ both are even}.
\end{cases}
\eeq  
\end{cor}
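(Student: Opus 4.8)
The plan is to deduce Corollary \ref{Spnio} directly from Theorem \ref{SpnSWCs} by plugging in the symmetry \eqref{e1} for the character values and invoking Lemma \ref{msdeed}. The only point that requires justification beyond a mechanical substitution is that the hypotheses of Lemma \ref{msdeed} are met by $\sigma = \res^G_{Z_X}\pi$, viewed as an $S_n$-invariant representation of the elementary abelian group $E = Z_X \cong C_2^n$.

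First I would record that $\res^G_{Z_X}\pi$ is $S_n$-invariant (this was already observed before Theorem \ref{SpnSWCs}, since $S_n$ normalizes $Z_X$ inside $G$ and $\pi$ is a representation of all of $G$), so that the decomposition $\pi|_{Z_X} \cong \bigoplus_k m_k(\pi)\sigma_k$ and the product formula of Proposition \ref{wsigma} apply, and the exponents are $m_k(\pi) = \frac{1}{2^n}\sum_{i=0}^n \chi_{\sigma_i}(g_k)\chi_\pi(g_i)$ as in Theorem \ref{SpnSWCs}. Under the identification $Z_X \cong C_2^n$ the element $\vartheta_i$ of Section \ref{eab} corresponds to $g_i$ (up to the $S_n$-action, which does not affect character values on $Z_X$ or the symmetric classes $\mc D^{[k]}$), so the hypothesis ``$\chi_\sigma(\vartheta_i) = \chi_\sigma(\vartheta_{n-i})$ for all $i$'' of Lemma \ref{msdeed} translates precisely to $\chi_\pi(g_i) = \chi_\pi(g_{n-i})$.

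Second I would verify that \eqref{e1} holds in the two cases of the corollary. For $\pi$ irreducible orthogonal, Gow's formula \eqref{gowf} gives $\varepsilon(\pi) = \omega_\pi(-\mathbbm 1) = +1$, hence $\omega_\pi(-\mathbbm 1) = 1$, which forces $-\mathbbm 1$ to act as the identity scalar on $\pi$, so $\chi_\pi(g_n) = \chi_\pi(-\mathbbm 1) = \chi_\pi(\mathbbm 1) = \chi_\pi(g_0)$; more generally $g_{n-i} = (-\mathbbm 1)g_i$ and $-\mathbbm1$ is central acting trivially, giving $\chi_\pi(g_{n-i}) = \chi_\pi(g_i)$ for all $i$. (For the symmetrization $\pi = S(\varphi)$ of an irreducible symplectic $\varphi$, one has $\chi_{S(\varphi)} = \chi_\varphi + \chi_{\varphi^\vee} = 2\,\mathrm{Re}\,\chi_\varphi$, and $\varepsilon(\varphi) = -1$ gives $\omega_\varphi(-\mathbbm 1) = -1$, so $-\mathbbm1$ acts as $-\mathrm{Id}$ on $\varphi$, whence $\chi_\varphi(g_{n-i}) = -\chi_\varphi(g_i)$ and therefore $\chi_{S(\varphi)}(g_{n-i}) = \chi_{S(\varphi)}(g_i)$; in either case the symmetry \eqref{e1} holds.)

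Finally, with \eqref{e1} in hand, Lemma \ref{msdeed} immediately yields that $m_k(\pi) = 0$ for odd $k$ and gives the stated folded expressions for even $k$ according to the parity of $n$; substituting these into the product formula of Theorem \ref{SpnSWCs} produces the displayed formula for $w(\pi)$. I do not anticipate a genuine obstacle here — the argument is a bookkeeping matter — but the one step deserving care is the translation between the ambient-group data ($g_i \in G$, the $S_n$ acting by conjugation) and the abstract elementary-abelian setup of Section \ref{eab} ($\vartheta_i \in E$, the $S_n$ permuting a chosen basis), i.e.\ checking that the identification $Z_X \cong C_2^n$ is compatible with the $S_n$-actions and sends $g_i$ to a conjugate of $\vartheta_i$, so that Lemma \ref{msdeed} is genuinely applicable. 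Divisibility of each $m_k(\pi)$ by $4$, needed for the exponents $m_k(\pi)/4$ to make sense, was already established via Lemma \ref{ORQn} and the subgroup chain $Z_X \leq Q_X \leq X \leq G$.
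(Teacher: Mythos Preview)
Your proposal is correct and follows essentially the same route as the paper: use Gow's formula to obtain the symmetry $\chi_\pi(g_i)=\chi_\pi(g_{n-i})$ (via $g_{n-i}=(-\mathbbm 1)g_i$ and $\omega_\pi(-\mathbbm 1)=1$), then apply Lemma \ref{msdeed} to the $S_n$-invariant restriction $\pi|_{Z_X}$ to simplify the exponents $m_k(\pi)$ in Theorem \ref{SpnSWCs}. Your extra care in matching $g_i$ with $\vartheta_i$ under $Z_X\cong C_2^n$ and in recalling the divisibility by $4$ from Lemma \ref{ORQn} is sound but not strictly needed beyond what the paper already set up.
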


\begin{cor} 
Let $\varphi$ be an irreducible symplectic representation of $G$. Then we have

\beq
w(S(\varphi))=\prod_{k=1}^n\Big(\mc D^{[k]} (\mf e_1,\hdots,\fe_n)\Big)^{m_{k}(\varphi)/2}  
\eeq
where  
\beq
m_{k}(\varphi)=\begin{cases}
0 & \text{ when } k \text{ is even and } n \text{ is odd}\\
\cfrac{1}{2^{n-1}}\sum\limits_{i=0}^{\lceil \frac {n-2}2 \rceil}\chi_{\sigma_i}(g_k)\chi_\varphi(g_i) & \text{ when } k \text{ is odd, }\\[10pt]
  \cfrac{1}{2^{n}} \chi_{\sigma_{\frac n2}}(g_k)\chi_\varphi(g_{\frac n2})  & \text{ when } k,n \text{ both are even}.
\end{cases}
\eeq  
\end{cor}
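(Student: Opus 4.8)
The plan is to derive this corollary in exactly the same way Corollary~\ref{Spnio} is obtained, but starting from the symmetrization side rather than the irreducible-orthogonal side. First I would record the basic structural facts: by Theorem~\ref{SpnSWCs}, for any orthogonal representation $\Pi$ of $G$ we have $w(\Pi)=\prod_{k=1}^n(\mc D^{[k]}(\fe_1,\ldots,\fe_n))^{m_k(\Pi)/4}$ with $m_k(\Pi)=\tfrac{1}{2^n}\sum_{i=0}^n \chi_{\sigma_i}(g_k)\chi_\Pi(g_i)$. Applying this to $\Pi=S(\varphi)$, note $\chi_{S(\varphi)}=\chi_\varphi+\overline{\chi_\varphi}$, and since $\varphi$ is self-dual its character is real-valued, so $\chi_{S(\varphi)}(g_i)=2\chi_\varphi(g_i)$ for each $i$. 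Hence $m_k(S(\varphi))=2\,m_k(\varphi)$ if we \emph{define} $m_k(\varphi):=\tfrac{1}{2^n}\sum_{i=0}^n\chi_{\sigma_i}(g_k)\chi_\varphi(g_i)$, and therefore the exponent $m_k(S(\varphi))/4$ becomes $m_k(\varphi)/2$, matching the claimed product formula. The content left to prove is the case-by-case simplification of $m_k(\varphi)$.

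For that simplification I would invoke Gow's formula (Theorem~\ref{cen}): since $\varphi$ is irreducible symplectic, $\varepsilon(\varphi)=-1=\omega_\varphi(-\mathbbm1)$, so the central character is \emph{odd}. Consequently $\chi_\varphi(-\mathbbm1)=-\chi_\varphi(\mathbbm1)$, i.e.\ $\chi_\varphi(g_n)=-\chi_\varphi(g_0)$; more generally, writing $-\mathbbm1 = g_n$ and using that $g_i\cdot(-\mathbbm1)$ is conjugate to $g_{n-i}$ in $G$ (they are both diagonal $\pm1$ matrices with $2(n-i)$ eigenvalues equal to $-1$, hence conjugate under the Weyl-group $S_n \leq G$), we get the twisted symmetry
\begin{equation*}
\chi_\varphi(g_i) = -\,\chi_\varphi(g_{n-i}), \qquad 0\leq i\leq n.
\end{equation*}
This is the exact analogue of \eqref{e1} but with a sign. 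Now I would feed this antisymmetry into the sum defining $m_k(\varphi)$, pairing the $i$ term with the $n-i$ term, just as Lemma~\ref{msdeed} does for the symmetric case. Because $\chi_{\sigma_i}(g_k)=[(1-x)^k(1+x)^{n-k}]_i$ by Lemma~\ref{chisig}, one checks $\chi_{\sigma_i}(g_k) = (-1)^k \chi_{\sigma_{n-i}}(g_k)$ (substitute $x\mapsto -x$, which swaps the two factors up to the sign $(-1)^k$ and reverses the coefficient index $i\leftrightarrow n-i$). Combining the two sign relations, the $(i,n-i)$ pair contributes $\chi_{\sigma_i}(g_k)\chi_\varphi(g_i) + \chi_{\sigma_{n-i}}(g_k)\chi_\varphi(g_{n-i}) = \chi_{\sigma_i}(g_k)\chi_\varphi(g_i)\bigl(1+(-1)^{k+1}\bigr)$, which vanishes when $k$ is even and doubles when $k$ is odd --- but one must be careful that this ``doubling/vanishing'' statement for the paired sum interacts correctly with $n$ parity, exactly as in the three cases of Lemma~\ref{msdeed}.

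The three cases then fall out mechanically. When $n$ is odd there is no fixed point of $i\mapsto n-i$, so $m_k(\varphi)=\tfrac{1}{2^{n}}\cdot 2\sum_{i=0}^{(n-1)/2}\chi_{\sigma_i}(g_k)\chi_\varphi(g_i)\cdot\tfrac{1+(-1)^{k+1}}{?}$, giving $0$ for $k$ even and $\tfrac{1}{2^{n-1}}\sum_{i=0}^{(n-1)/2}\chi_{\sigma_i}(g_k)\chi_\varphi(g_i)$ for $k$ odd --- here $\lceil (n-2)/2\rceil = (n-1)/2$, matching the statement. When $n$ is even, $i=n/2$ is a fixed point; the antisymmetry forces $\chi_\varphi(g_{n/2})=-\chi_\varphi(g_{n/2})$, so $\chi_\varphi(g_{n/2})=0$ unless... wait, no: for the fixed point we need $\chi_{\sigma_{n/2}}(g_k)$ as well. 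The fixed term is $\tfrac{1}{2^n}\chi_{\sigma_{n/2}}(g_k)\chi_\varphi(g_{n/2})$, and the self-pairing relation gives $\chi_{\sigma_{n/2}}(g_k) = (-1)^k\chi_{\sigma_{n/2}}(g_k)$, so this term vanishes for $k$ odd and survives for $k$ even, while the non-fixed pairs vanish for $k$ even and survive for $k$ odd; reconciling these yields precisely the stated formula: $0$ when $k$ even and $n$ odd is subsumed, the $k$ odd case sums to $\tfrac{1}{2^{n-1}}\sum_{i=0}^{(n-2)/2}$ (since the $n/2$ term drops out for $k$ odd and $\lceil(n-2)/2\rceil = (n-2)/2$ for $n$ even), and the $k,n$ both even case is just the surviving fixed term $\tfrac{1}{2^n}\chi_{\sigma_{n/2}}(g_k)\chi_\varphi(g_{n/2})$.

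The main obstacle I anticipate is bookkeeping the \emph{interaction} between the sign $(-1)^k$ coming from $\chi_{\sigma_i}(g_k)=(-1)^k\chi_{\sigma_{n-i}}(g_k)$ and the sign $-1$ coming from Gow's formula $\chi_\varphi(g_i)=-\chi_\varphi(g_{n-i})$: the product of these is $(-1)^{k+1}$, so the parity that kills terms here is \emph{opposite} to the parity in Lemma~\ref{msdeed} (where both signs were $+$, giving $(-1)^k$ and killing odd $k$). This sign flip is precisely why the symmetric case has ``$m_k=0$ for $k$ odd'' while the symplectic-symmetrization case has ``$m_k=0$ for $k$ even (when $n$ is odd),'' and getting the $n$-parity bookkeeping for the fixed point $i=n/2$ consistent with this flip --- ensuring the ceiling $\lceil (n-2)/2\rceil$ appears uniformly in the $k$-odd line --- is the one spot demanding care. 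Everything else is a transcription of the argument for Corollary~\ref{Spnio} with the single substitution of an extra minus sign, and with the overall exponent scaled by $2$ because $w(S(\varphi))=\kappa(c(\varphi))$ forces $m_k(S(\varphi))=2m_k(\varphi)$.
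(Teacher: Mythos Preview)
Your approach is correct and matches the paper's (implicit) argument: the paper derives both Corollary~\ref{Spnio} and this corollary by the same mechanism, namely Gow's formula combined with the pairing $i\leftrightarrow n-i$ in the sum defining $m_k$, using the coefficient symmetry $\chi_{\sigma_i}(g_k)=(-1)^k\chi_{\sigma_{n-i}}(g_k)$ from Lemma~\ref{chisig}. You have correctly identified that the symplectic case introduces an extra minus sign via $\omega_\varphi(-\mathbbm1)=-1$, flipping which parity of $k$ survives relative to Lemma~\ref{msdeed}, and that $\chi_{S(\varphi)}=2\chi_\varphi$ converts the exponent $m_k(S(\varphi))/4$ into $m_k(\varphi)/2$.

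One small cleanup: your closing remark that ``$w(S(\varphi))=\kappa(c(\varphi))$ forces $m_k(S(\varphi))=2m_k(\varphi)$'' is not the right justification; the factor of $2$ comes simply from $\chi_{S(\varphi)}=2\chi_\varphi$, as you already said earlier in the argument. Also note that your observation $\chi_\varphi(g_{n/2})=-\chi_\varphi(g_{n/2})$ for $n$ even is in fact valid and shows $\chi_\varphi(g_{n/2})=0$, so the ``$k,n$ both even'' line of the statement is always zero anyway---the formula is stated in that form only to parallel the structure of Corollary~\ref{Spnio}.
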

In the formula above, $\left\lceil\cdot \right\rceil$ denotes the ceiling function; in particular  \[
\left\lceil \frac{n-2}{2} \right\rceil =
\begin{cases}
\frac{n-2}{2}, & \text{if \(n\) is even},\\[6pt]
\frac{n-1}{2}, & \text{if \(n\) is odd}.
\end{cases}
\]
 
\section{Examples} \label{examples.section}

\subsection{The case $n=1$} \label{SL2}
Let $G=\SL(2,q)$, and $\pi$ an orthogonal representation of $G$. Theorem \ref{SpnSWCs} applied for $n=1$ gives $w(\pi)=(1+\fe)^{m_1(\pi)/4}$, where $m_1(\pi)$ in terms of character values at $g_0=\mathbbm1$, $g_1=-\mathbbm1$ is:
 \begin{align*}
m_1(\pi)&=\frac12\big(\chi_\pi(\mathbbm1)-\chi_\pi(-\mathbbm1)\big).
\end{align*}
In particular,
\begin{equation} \label{w4.sl2}
w_4(\pi)=\frac{1}{8}(\deg \pi-\chi_\pi(-\mathbbm 1)) \mf e.
\end{equation}
When $\pi$ is irreducible orthogonal, $m_1(\pi)=0$ by Corollary \ref{Spnio}. Therefore, $w(\pi)$ is trivial for such representations. (Compare \cite[Theorem 1.2]{NSSL2}.)

\subsection{The case $n=2$} 
Let 
$g_0=\mathbbm{1},
 g_1=\diag(1,-1,-1,1),
 g_2=-\mathbbm{1}\in G.$ With $n=2$ in Theorem \ref{SpnSWCs}, the total SWC of an orthogonal representation $\pi$ of $G$ is:
\begin{equation}\label{wSp4}
\begin{split}
w(\pi)&=\Big(\mc D^{[1]}(\fe_1,\fe_2)\Big)^{m_1(\pi)/4}\Big(\mc D^{[2]}(\fe_1,\fe_2)\Big)^{m_2(\pi)/4}\\
&=\Big((1+\mathfrak{e}_1)(1+\mathfrak{e}_2)\Big)^{m_1(\pi)/4}\Big(1+\mathfrak{e}_1+\mathfrak{e}_2\Big)^{m_2(\pi)/4},
\end{split}
\end{equation} where
\begin{align*}
m_1(\pi)&=\frac{1}{4}\big(\chi_{\sigma_0}(g_1)\chi_\pi(\mathbbm 1)+\chi_{\sigma_1}(g_1)\chi_\pi(g_1)+\chi_{\sigma_2}(g_1)\chi_\pi(-\mathbbm 1)\big)\\
&=\frac{1}{4}\big(\chi_\pi(\mathbbm{1})-\chi_\pi(-\mathbbm{1})\big), \text{ and}
\end{align*}
\begin{align*}
m_2(\pi)&=\frac{1}{4}\big(\chi_{\sigma_0}(g_2)\chi_\pi(\mathbbm 1)+\chi_{\sigma_1}(g_2)\chi_\pi(g_1)+\chi_{\sigma_2}(g_2)\chi_\pi(-\mathbbm 1)\big)\\
&=\frac{1}{4}\big(\chi_\pi(\mathbbm{1})-2\chi_\pi(g_1)+\chi_\pi(-\mathbbm{1})\big).
\end{align*}
Here,  the character values $\chi_{\sigma_i}(g_k)$ are obtained using Lemma \ref{chisig}, by expanding the polynomials $(1-y)^k(1+y)^{2-k}$ for $k=1,2$. (This proves Theorem \ref{intro.thm.sp4}.)

Let $\mc E_i=\mc E_i(\mf e_1,\mf e_2)$. From Equation \eqref{wSp4}, we deduce
\begin{equation} \label{w4}
\begin{split}
w_4(\pi) &=\frac{m_1(\pi)+m_2(\pi)}{4}(\fe_1+\fe_2) \\
	&= \frac{1}{8} \left( \deg \pi - \chi_\pi(g_1) \right)\mc E_1  \text{, and}  \\
\end{split}
\end{equation}

 \begin{equation}\label{sp4w8}
 \begin{split}
 w_8(\pi) &= m_1(\pi) \mc E_2 + \left( \binom{m_1(\pi)}{2}+  \binom{m_2(\pi)}{2} \right) \mc E_1^2. \\
\end{split}
 \end{equation}

When $\pi$ is irreducible orthogonal, Corollary \ref{Spnio} leads to the simplification:
$$w(\pi)=\Big(1+\mathfrak{e}_1+\mathfrak{e}_2\Big)^{m_2(\pi)/4}$$
where 
\begin{align*}
m_2(\pi)=\frac{1}{2}\big(\chi_\pi(\mathbbm{1})-\chi_\pi(g_1)\big).
\end{align*}

To prove Corollary \ref{HSW}, we need to describe some representations of $G=\Sp(4,q)$ and find some of their character values, which we will do now.
 
First is a parabolically induced representation given as follows. Let $B$ be the Borel subgroup, consisting of the upper triangular matrices of $G$. Consider the Levi subgroup 
$$M=\left\{\begin{pmatrix}
s&0&0&0\\
0&\ast&\ast&0\\
0&\ast&\ast&0\\
0&0&0&s^{-1}
\end{pmatrix}: s\in \f_q^\times\right\}\cong \f_q^\times\times \mc S.$$

Let $P$ be the parabolic subgroup generated by $M$ and $B$. We have $|P|=q^4(q-1)(q^2-1)$. Let $T_1$ be the subgroup with elements of the form $\diag(s,1,1,s^{-1})$.
%$$P\twoheadrightarrow M \twoheadrightarrow T_1.$$
Consider a linear character $\chi$ of $\f_q^\times$ with $\chi(-1) = -1$. Set $\alpha=\chi\circ\pr$, where $\pr$ is the projection map $M \twoheadrightarrow T_1$. Then, $\alpha(\diag(-1,1, 1,-1)) =- 1$.

Consider the parabolic induction $\pi_1=\Ind_P^G \as$. We have $\deg \pi=(q+1)(q^2+1)$, $\chi_{\pi_1}(-\mathbbm1)=-(q+1)(q^2+1)$ (as  $\as$ is odd), and $\chi_{\pi_1}(g_1)=0$. (By the Frobenius formula for induced representations.)
 
Next is a representation induced from $X$. Let $\psi$ be an odd linear character of $\f_{q^2}^\times$ with $\psi^q\neq \psi$. 
According to  \cite[Section 5.2]{fulton}, there is an  irreducible cuspidal representation $\sigma_\psi$ of  $\mc S$ corresponding to $\psi$, with degree $q-1$. Viewing $X$ as the product $\mc S \times \mc S$ as in Section \ref{subgroup.sec}, we form the external tensor product representation $\sigma_\psi\boxtimes 1$ of $X$. Finally, let $\pi_2=\Ind_X^G(\sigma_\psi\boxtimes 1)$.
Here, $\deg(\pi_2)=q^2(q-1)(q^2+1)$ and again as $\psi$ is odd, $\chi_{\pi_2}(-\mathbbm1)=-\deg \pi_2.$ Also, $\chi_{\pi_2}(g_1)=0$:  observe that the only conjugates of $g_1$ in $X$ are $\pm g_1$ and there is a Weyl element sending $g_1$ to $-g_1$ upon conjugation. (And again use the Frobenius character formula.)

\begin{proof}[ Proof of Corollary \ref{HSW}]
Let $\Pi_1=S(\pi_1)$, $\Pi_2=S(\pi_2)$ with $\pi_1$, $\pi_2$ as above. Using their character values, we obtain $m_{1}(\Pi_1)=(q+1)(q^2+1)$, $m_{1}(\Pi_2)=q^2(q-1)(q^2+1)$ and $m_{2}(\Pi_2)=m_{2}(\Pi_2)=0$. Note that 
 $m_1(\Pi_1)/4$ is odd when $q\equiv 1$ mod ($4$), and $m_1(\Pi_2)/4$ is odd when $q\equiv 3$ mod ($4$).
 
Corresponding to the cases, we have
\begin{align*}
w(\Pi_i)&=(1+\fe_1+\fe_2+\fe_1\fe_2)^{m_{1}(\Pi_i)/4}\\
&=1+\frac{m_{1}(\Pi_i)}{4}\left(\fe_1+\fe_2+\fe_1\fe_2\right)+\hdots
\end{align*} hence $\mc E_1,\mc E_2\in \hc^*_{\SW}(G)$.

\end{proof}

\subsection{The case $n=4$} 

Let $\ul \fe=(\fe_1,\fe_2,\fe_3,\fe_4)$.
 We apply Theorem \ref{SpnSWCs} for $n=4$ to have the total SWC of an orthogonal representation $\pi$ of $G$:
\enlargethispage{\baselineskip}
\begin{equation}\label{Sp6}
 w(\pi)=\mc D^{[1]}(\ul \fe)^{m_1(\pi)/4}\mc D^{[2]}(\ul \fe)^{m_2(\pi)/4}
 \mc D^{[3]}(\ul \fe)^{m_3(\pi)/4}\mc D^{[4]}(\ul \fe)^{m_4(\pi)/4};
 \end{equation}
please see Section \ref{dickin} for calculation of these $\mc D^{[i]}(\ul \fe)$.

Also, Gow's formula through Corollary \ref{Spnio} allows simplification for irreducible orthogonal $\pi$:
$$w(\pi)=\Big(1+ \mc E_1+ \mc E_1^2 + \mc E_1^3+ \mc E_2^2+\mc E_1\mc E_3+\mc E_1\mc E_2^2+\mc E_1^2\mc E_3+\mc E_1\mc E_2\mc E_3+\mc E_3^2+\mc E_1^2\mc E_4\Big)^{m_2(\pi)/4}(1+\mc E_1)^{m_4(\pi)/4}$$
where, with the help of Lemma \ref{chisig} once again, gives \beq 
\begin{split}
m_2(\pi)&=\cfrac{1}{8}\big(\deg(\pi)-\chi_\pi(g_2)\big),\\
m_4(\pi)&=\cfrac{1}{8}\big(\deg(\pi)-4\chi_\pi(g_1)+3\chi_\pi(g_2)\big).
\end{split}
\eeq

Again, $g_1=\diag(1,1,1,-1,-1,1,1,1)$ and
 $ g_2=\diag(1,1,-1,-1,-1,-1,1,1)$.

\subsection{Weil Representations} \label{weil.section} 
For a fixed nontrivial linear character $\lambda$ of $(\f_q,+)$, the prescription
$$ x  \mapsto \lambda_x\quad;\quad \lambda_x(y)=\lambda(xy)$$
 defines a group isomorphism  $\f_q \overset{\sim}{\to} \widehat{\f}_q$.
Let $\as$ be a non-square, and $1$ be the identity in $\f_q^\times$.  According to G\'{e}rardin \cite[Theorem 2.4(d)]{PGWeil}, the symplectic group $G=\Sp(2n,q)$ has two nonisomorphic \emph{Weil representations} $\mathcal{W}_{\lambda_1}$, $\mathcal{W}_{\lambda_\as}$ associated to the characters $\lambda_1$, $\lambda_\as$. Set 
$\mathcal{W}=\mathcal{W}_{\lambda_1}$ and $\mathcal{W}'=\mathcal{W}_{\lambda_\as}$.  Both $\mathcal{W}$, $\mathcal{W}'$ are complex representations of degree $q^n$, and decompose into two irreducible representations as
$$\mathcal{W}=\mathcal{W}_{l}\oplus\mathcal{W}_{s}\quad, \quad \mathcal{W}'=\mathcal{W}'_{l}\oplus\mathcal{W}'_{s} $$
where $\mathcal{W}_{l}$, $\mathcal{W}'_l$ are the components with larger degrees $(q^n+1)/2$, and $\mathcal{W}_{s}$, $\mathcal{W}'_{s}$ have degrees $(q^n-1)/2$. When $q\equiv 3$ (mod $4$), none of these four irreducible representations are self-dual. In fact, $\mathcal{W}_{l}$, $\mathcal{W}'_{l}$ are dual to each other, and similarly $\mathcal{W}_{s}$ is the dual of $\mathcal{W}'_s$. Whereas when $q\equiv 1$ (mod $4$), all representations of $G$ are self-dual. In this case, exactly one of $\mathcal{W}_{l}$, $\mathcal{W}_{s}$ is orthogonal, and the other is symplectic. The same is true for the components of $\mathcal{W}'$.  So, neither $\mathcal{W}$ nor $\mathcal{W}'$ is orthogonal. Here we compute 
\beq
w(S(\mathcal{W}))=c(\mc W) \mod 2.
\eeq

From\cite[Corollary 4.8.1]{PGWeil}, the character values of $\mathcal W$ at $g_i$ are given by:
$$\chi_{\mathcal W}(g_i)=(-1)^{i\frac{q-1}{2}}q^{N(g_i)},  \text{ with }\quad N(g)=\frac12\dim_{\mb F_q}\ker(g-\mathbbm1).$$
One sees that
$$\chi_{\mathcal W}(g_i)=(-1)^{i\frac{q-1}{2}}q^{n-i}.$$
For each $1\leq k\leq n$, Equation \eqref{mk} gives
\begin{align*}m_{k}(S(\mathcal{W}))&=\frac{1}{2^{n-1}}\sum_{i=0}^n [f_k]_i \chi_\w(g_i)\\
&=\frac{1}{2^{n-1}}\sum_{i=0}^n [f_k]_i (-1)^{i\frac{q-1}{2}}q^{n-i}\end{align*}
where $[f_k]_i$ is the coefficient of $x^i$ in $f_k(x)=(1-x)^k(1+x)^{n-k}$. Note that  $[f_k]_{n-i}=(-1)^k[f_k]_i$ for each $i=0,\hdots,n$, so that
$$m_{k}(S(\mathcal{W}))=\frac{1}{2^{n-1}}\sum_{i=0}^n (-1)^k [f_k]_{n-i} (-1)^{i\frac{q-1}{2}}q^{n-i}.$$
Put $F_k(x):=(x-1)^k(x+1)^{n-k}=(-1)^kf_k(x)$. Then, for $q\equiv 1$ (mod $4$), it is clear that
$$m_{k}(S(\mathcal{W}))=\frac{1}{2^{n-1}}F_k(q)=\frac{1}{2^{n-1}}(q-1)^k(q+1)^{n-k}.$$
Whereas for $q\equiv 3$ (mod $4$), with a few manipulations, we get 
\begin{align*}m_{k}(S(\mathcal{W}))&=\frac{1}{2^{n-1}}\sum_{i=0}^n  (-1)^{i}[F_k]_{n-i} q^{n-i}\\
&=\frac{1}{2^{n-1}}\sum_{j=0}^n  (-1)^{n-j}[F_k]_{j} q^{j}\\
%&=\frac{(-1)^n}{2^{n-1}}\sum_{j=0}^n  [F_k]_{j} (-q)^{j}\\
&=\frac{(-1)^n}{2^{n-1}}F_k(-q)\\
&=\frac{1}{2^{n-1}}(q-1)^{n-k}(q+1)^k.
\end{align*}
\begin{cor}
For the Weil representations $\w$ and $\w'$ of $G$, we have

$$w(S(\w))=w(S(\w'))=\prod_{k=1}^n\mc D^{[k]} (\mf e_1, \ldots, \mf e_n)^{m_{k}(S(\w))/4}$$
where $$m_{k}(S(\w))=\begin{cases}
\cfrac{1}{2^{n-1}}(q-1)^{k}(q+1)^{n-k},& q\equiv 1\;(\Mod 4)\\
\cfrac{1}{2^{n-1}}(q-1)^{n-k}(q+1)^{k},& q\equiv 3\;(\Mod 4).
\end{cases}$$
\end{cor}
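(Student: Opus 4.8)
The plan is to feed the character computations of this subsection into Theorem \ref{SpnSWCs}. First, $S(\mathcal{W})$ is orthogonal (symmetrizations always are), so Theorem \ref{SpnSWCs} gives
\[
w(S(\mathcal{W})) = \prod_{k=1}^n \mathcal{D}^{[k]}(\mathfrak{e}_1,\ldots,\mathfrak{e}_n)^{m_k(S(\mathcal{W}))/4},
\]
and everything reduces to identifying the exponents $m_k(S(\mathcal{W}))$. Since the values $\chi_{\mathcal{W}}(g_i) = (-1)^{i(q-1)/2}q^{n-i}$ recorded above are real, $\chi_{S(\mathcal{W})}(g_i) = 2\chi_{\mathcal{W}}(g_i)$; combining this with Lemma \ref{chisig} (which gives $\chi_{\sigma_i}(g_k) = [f_k]_i$, the coefficient of $x^i$ in $f_k(x) = (1-x)^k(1+x)^{n-k}$) and formula \eqref{mk}, I obtain
\[
m_k(S(\mathcal{W})) = \frac{1}{2^{n-1}}\sum_{i=0}^n [f_k]_i(-1)^{i(q-1)/2}q^{n-i}.
\]

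Next I would evaluate this sum. The structural input is the identity $x^n f_k(1/x) = (x-1)^k(x+1)^{n-k} =: F_k(x) = (-1)^k f_k(x)$, so that $[f_k]_{n-i} = [F_k]_i$. Splitting on the parity of $(q-1)/2$: when $q \equiv 1 \pmod 4$ the sign $(-1)^{i(q-1)/2}$ is trivial and reindexing $i \mapsto n-i$ collapses the sum to $\tfrac{1}{2^{n-1}}F_k(q) = \tfrac{1}{2^{n-1}}(q-1)^k(q+1)^{n-k}$; when $q \equiv 3 \pmod 4$ the sign is $(-1)^i$ and the same reindexing yields $\tfrac{(-1)^n}{2^{n-1}}F_k(-q) = \tfrac{1}{2^{n-1}}(q-1)^{n-k}(q+1)^k$. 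These match the two cases in the statement, completing the computation for $\mathcal{W}$.

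For $\mathcal{W}'$ I would show the exponents are the same. When $q \equiv 3 \pmod 4$ the irreducible constituents of $\mathcal{W}'$ are the duals of those of $\mathcal{W}$, so $\mathcal{W}' \cong \mathcal{W}^\vee$ and $S(\mathcal{W}') \cong S(\mathcal{W})$ outright. When $q \equiv 1 \pmod 4$ the two Weil representations are inequivalent, but by G\'erardin $\mathcal{W}'$ is the twist of $\mathcal{W}$ by conjugation with a similitude in $\GSp(2n,q)$ of non-square multiplier; one may take this similitude diagonal, e.g. $\diag(\alpha,\ldots,\alpha,1,\ldots,1)$ with $\alpha$ a non-square, hence commuting with every $g_i$, so $\chi_{\mathcal{W}'}(g_i) = \chi_{\mathcal{W}}(g_i)$ for all $i$ and $m_k(S(\mathcal{W}')) = m_k(S(\mathcal{W}))$.

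The argument is mostly bookkeeping; the two points needing care are the factor $\tfrac{1}{2^{n-1}}$ (valid because $\chi_{\mathcal{W}}$ is real on the $g_i$, so $\chi_{S(\mathcal{W})} = 2\chi_{\mathcal{W}}$ there) and keeping the signs consistent when passing between $f_k$ and $F_k$ in the two congruence cases. The one genuinely external ingredient is the identification of the $\mathcal{W}\leftrightarrow\mathcal{W}'$ twist, for which I would cite G\'erardin.
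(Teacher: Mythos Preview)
Your proposal is correct and follows essentially the same route as the paper: the computation of $m_k(S(\mathcal{W}))$ via the polynomials $f_k$, $F_k$ and the reindexing trick matches the paper's derivation (which is carried out in the text immediately preceding the corollary). For $\mathcal{W}'$ the paper simply cites G\'erardin for the equality $\chi_{\mathcal{W}}(g_i) = \chi_{\mathcal{W}'}(g_i)$ at all $g_i$ rather than splitting on $q \pmod 4$; your case-by-case argument (duality when $q\equiv 3$, similitude twist when $q\equiv 1$) is a valid and slightly more explicit alternative leading to the same conclusion.
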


\begin{proof}
We have already shown this for $\w$. In \cite{PGWeil}, we see that $\chi_\w(g_i)=\chi_{\w'}(g_i)$ for each $i$. Thus, the formula  also holds for $w(S(\w'))$.
\end{proof}

\section{Universal SWCs}    \label{universal.section}   
Let us write $X_n$ for the subgroup of $G_n=\Sp(2n,q)$  isomorphic to $\mc S^n$ (previously just written as $X$).
The restriction map $\hc^*(G_n)$ to $\hc^*(X_n)$ has image in the $S_n$-invariants $\hc^*(X_n)^{S_n}$, which  contains the elements
$\mc E_j=\mc E_j(\fe_1, \ldots, \fe_n)$ but also
 \beq
\mc F_j=\sum_{\substack{i_1<\hdots< i_j\\1\leq k\leq j}}\fe_{i_1}\hdots\hat{\fe}_{i_k}\hdots\fe_{i_j}b_{i_k}.
 \eeq
Note that $ \deg (\mc E_j)=4j$ and $\deg(\mc F_j)=4j-1$.

\begin{thm}\label{lind} Let $E_1, \ldots, E_n, F_1, \ldots, F_n$ be formal indeterminates. The homomorphism 
\beq
\mathbb F_2[E_1, \ldots, E_n, F_1, \ldots, F_n] \to \hc^*(X_n)^{S_n}
\eeq
 defined by sending $E_i$ to $\mc E_i$ and $F_i$ to $\mc F_j$ is surjective with kernel equal to the ideal $(F_1^2, \ldots, F_n^2)$.
\end{thm}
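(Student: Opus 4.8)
The plan is to make the target ring fully explicit, recognize the $\mc F_j$ as the ``differentials'' of the $\mc E_j$, and reduce the statement to a characteristic-$2$ analogue of Solomon's theorem on invariant exterior forms for the symmetric group. By \eqref{kf} we may write $\hc^*(X_n)\cong R\otimes_{\mathbb F_2}\Lambda$, with $R=\mathbb F_2[\fe_1,\dots,\fe_n]$ ($\deg\fe_i=4$), $\Lambda=\mathbb F_2[\fb_1,\dots,\fb_n]/(\fb_1^2,\dots,\fb_n^2)$ ($\deg\fb_i=3$), and with $S_n$ permuting the index $i$ simultaneously on the $\fe$'s and the $\fb$'s (Section~\ref{subgroup.sec}). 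Let $\partial$ be the $\mathbb F_2$-linear derivation of $\hc^*(X_n)$ determined by $\partial\fe_i=\fb_i$, $\partial\fb_i=0$; it is $S_n$-equivariant, satisfies $\partial^2=0$ (the cross terms cancel in characteristic $2$), and unwinding the definition gives $\mc F_j=\partial(\mc E_j)$ for every $j$. In particular each $\mc F_j\in\hc^*(X_n)^{S_n}$, so $\Phi$ is well defined, and $\mc F_j^2=0$: every monomial of $\mc F_j$ carries a single $\fb$, so in $\mc F_j^2$ the off-diagonal cross terms cancel mod $2$ while the diagonal terms contain a factor $\fb_i^2=0$. Hence $(F_1^2,\dots,F_n^2)\subseteq\ker\Phi$ and $\Phi$ factors through
$$\ol\Phi\colon\ \mathbb F_2[E_1,\dots,E_n]\otimes_{\mathbb F_2}\Lambda(F_1,\dots,F_n)\ \longrightarrow\ \hc^*(X_n)^{S_n},$$
whose source is free over $\mathbb F_2[E_1,\dots,E_n]$ on the $2^n$ squarefree monomials $F_S=\prod_{j\in S}F_j$, $S\subseteq\{1,\dots,n\}$. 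It suffices to prove $\ol\Phi$ is an isomorphism.

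For injectivity of $\ol\Phi$ (equivalently $\ker\Phi=(F_1^2,\dots,F_n^2)$) I would localize at $K=\mathrm{Frac}(R)$, so that $\hc^*(X_n)\otimes_R K\cong\Lambda_K(\fb_1,\dots,\fb_n)$. Writing $\mc F_j=\sum_i(\partial\mc E_j/\partial\fe_i)\fb_i=\sum_i e_{j-1}(\fe_1,\dots,\widehat{\fe_i},\dots,\fe_n)\fb_i$, the transition matrix $\big(e_{j-1}(\widehat{\fe_i})\big)_{i,j}$ has determinant the reduced discriminant $\prod_{i<i'}(\fe_i+\fe_{i'})\neq0$ in $K$, so $\mc F_1,\dots,\mc F_n$ is a $K$-basis of $K\fb_1\oplus\cdots\oplus K\fb_n$ and $\{\mc F_S\}_S$ a $K$-basis of $\Lambda_K$. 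Hence the $\mc F_S$ are linearly independent over $R$, a fortiori over $R^{S_n}$; since moreover $\mathbb F_2[E_1,\dots,E_n]\xrightarrow{\ \sim\ }R^{S_n}=\mathbb F_2[\mc E_1,\dots,\mc E_n]$ by the fundamental theorem of symmetric functions, $\ol\Phi$ is injective.

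The heart of the matter is surjectivity, i.e.\ $\hc^*(X_n)^{S_n}=\sum_S R^{S_n}\mc F_S$. I would filter $\hc^*(X_n)=\bigoplus_{d=0}^nR\otimes\textstyle\bigwedge^dV$ by exterior degree ($V=\mathbb F_2^n$ the permutation representation), an $S_n$-stable decomposition, reducing to $(R\otimes\bigwedge^dV)^{S_n}=\bigoplus_{|S|=d}R^{S_n}\mc F_S$ for each $d$. The case $d=0$ is again the fundamental theorem of symmetric functions, and the case $d=1$ says that $\mc F_1,\dots,\mc F_n$ generate the $\fb$-linear invariants over $R^{S_n}$, which follows from the Jacobian computation above together with the freeness of $R$ over $R^{S_n}$. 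For general $d$ this is a characteristic-free form of Solomon's theorem for $S_n$; I would prove it by induction on $d$, using that $R$ is free over $R^{S_n}$ to express a given invariant $\omega$ of exterior degree $d$ against an explicit basis, and peeling off leading terms via the $\mc F_j$ together with the relation $\mc F_j=\partial\mc E_j$ (which links the exterior degrees $d$ and $d-1$). The main obstacle — and the reason this step needs care — is that in characteristic $2$ the usual $\tfrac1{|W|}$-averaging proof of Solomon's theorem is unavailable; I expect to force the induction through using the explicit $K$-basis $\{\mc F_S\}$ of $\Lambda_K$ and a careful accounting of which $R^{S_n}$-combinations of the $\fb_S$ actually descend to genuine invariants.
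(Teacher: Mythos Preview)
The paper does not actually argue this statement; its entire proof is the one-line citation to Fiedorowicz--Priddy. So you are not competing with an argument in the paper, you are trying to supply one from scratch. Your injectivity half is correct and cleanly done: the identity $\det\big(e_{j-1}(\widehat{\fe_i})\big)_{i,j}=\prod_{i<i'}(\fe_i+\fe_{i'})$ holds over $\mathbb F_2$ and is nonzero in $K=\mathrm{Frac}(R)$, so the $\mc F_S$ are $K$-linearly (hence $R$- and $R^{S_n}$-linearly) independent, and together with the fundamental theorem of symmetric functions this gives $\ker\Phi=(F_1^2,\dots,F_n^2)$.

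The surjectivity half, however, cannot be salvaged along the lines you sketch, because as literally stated the surjectivity claim is \emph{false}. Already for $n=2$: the transposition fixes $\fb_1\fb_2\in\hc^6(X_2)$ (there is no Koszul sign over $\mathbb F_2$, so $\fb_2\fb_1=\fb_1\fb_2$), while the source of $\ol\Phi$ is zero in degree $6$, its generators having degrees $4,8,3,7$. Equivalently one computes
\[
\mc F_1\mc F_2=(\fb_1+\fb_2)(\fe_2\fb_1+\fe_1\fb_2)=\mc E_1\cdot\fb_1\fb_2,
\]
so $\fb_1\fb_2$ is $S_2$-invariant but lies outside the $R^{S_2}$-span of the $\mc F_S$. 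This is exactly the well-known failure of Solomon's theorem when the characteristic divides $|W|$; in characteristic $0$ the element $\fb_1\fb_2$ would be \emph{anti}-invariant under the transposition and the problem would not arise. Your caution about the averaging argument was warranted, but the obstruction is fatal rather than merely technical, and the induction you propose cannot be ``forced through''. What the Fiedorowicz--Priddy reference presumably supplies, and what the proof of Theorem~\ref{intro.thm.univers} actually uses, is that the \emph{image of} $\hc^*(G_n)\to\hc^*(X_n)$ (which sits inside, but is strictly smaller than, the full $S_n$-invariants) is the algebra $\mathbb F_2[\mc E_i,\mc F_j]/(\mc F_j^2)$; the target $\hc^*(X_n)^{S_n}$ written in the theorem appears to be an over-statement.
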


\begin{proof} This is implicit in \cite[Theorem 6.1, page 283]{FP}.
\end{proof}

Let $m\geq n$ and $\iota_n: G_{n-1}\to G_m$ be the following inclusion: For $A\in G_{n-1}$,
$$\iota_n(A)=\begin{pmatrix}

\mathbbm 1_{m-n+1}&&\\
&A&\\
&&\mathbbm 1_{m-n+1}
\end{pmatrix}.$$

\begin{proof}[Proof of Theorem \ref{intro.thm.univers}]
By Lemma \ref{spsl2}, it is enough to show that the restriction  \newline $\hc^i(X_m)^{S_m}\to \hc^i(X_{n-1})^{S_{n-1}}$ is injective. Write $R_n=P_n\otimes_{\mathbb F_2} Q_n$, where
 $P_n=\mathbb F_2[\mc E_1,\hdots, \mc E_n]$ is the subalgebra generated by $\mc E_1,\hdots, \mc E_n$ and $Q_n=\mathbb F_2[\mc F_1,\hdots, \mc F_n]$ is the subalgebra generated by $\mc F_1,\hdots, \mc F_n$ in $\hc^*(\mc S^n)$. From above, we need to show that the restriction of the map $P_m\otimes Q_m\to P_{n-1}\otimes Q_{n-1}$ to the degree $k$ part is injective for all $k<4n-1$.

Any monomial of degree $k$ is of the form $\mc E_1^{r_1}\hdots \mc E_m^{r_m}\mc F_1^{s_1}\hdots  \mc F_m^{s_m}$ with $r_i\geq 0$, $s_i\in\{0,1\}$. We may write this as $\mc E^{\mathbf r} \mc F^{\mathbf s}$, with  $\mathbf r=(r_1,\hdots,r_{m})$ and $\mathbf s=(s_1,\hdots,s_{m})$. By Theorem \ref{lind} above, these monomials are linearly independent.
 The condition 
\beq
\sum_{i=1}^m (4i)r_i+\sum_{j=1}^m (4j-1)s_j=k<4n-1
\eeq
ensures that if $i\geq n$, then $r_i=0$ and $s_i=0$.

An element $\as=\sum_{(\mathbf r,\mathbf s)}c_{\mathbf r,\mathbf s}\mc E^{\mathbf r} \mc F^{\mathbf s}\in \hc^{k}(G_m)$ under $\iota^*_n$ gets mapped to 
$$\sum_{\substack{\mathbf r=(r_1,\hdots,r_{n-1})\\\mathbf s=(s_1,\hdots,s_{n-1})}}c_{\mathbf r,\mathbf s}\mc E^{\mathbf r}\mc F^{\mathbf s}\in \hc^{k}(G_{n-1}).$$
If $i^*_n(\as)=0$, then all $c_{\mathbf{r},\mathbf{s}}$ are zero since $\mc E^{\mathbf r}\mc F^{\mathbf s}$ are linearly independent. This implies $\as=0$, and hence we have the injectivity of $\iota^*_n$ for $k<4n-1$.
\end{proof}

\begin{proof}[Proof of Theorem \ref{univ.intro}]
By Theorem \ref{intro.thm.univers}, the map  $i^*_2:\hc^4(G_m)\to \hc^4(G_1)=\hc^4(\mc S)$ is injective for all $m\geq 1$. So the formula for $w_4(\pi)$ follows from \eqref{w4.sl2}. Similarly, since $i^*_3:\hc^8(G_m)\to \hc^8(G_2)$ is injective for all $m\geq 3$, we have $w_8(\pi)$  from Equation \eqref{sp4w8}.
\end{proof}
 
 \appendix
\section{}

In this Appendix we discuss the formula for CCs and SWCs of external tensor products of representations. This formula involves an interesting family of polynomials which we now define.

\subsection{The Polynomials $\PP_{m,n}$}

Given non-negative integers $m,n$, we define $q_{m,n}$ be the following polynomial in $\Lambda=\z[x_1,\hdots,x_m,y_1,\hdots,y_n]$:
\begin{equation}\label{qmn}q_{m,n}(\mathbf{x},\mathbf{y})=\prod_{i=1}^m\prod_{j=1}^n(1+x_i+y_j).\end{equation}
It is symmetric in the $x_i$ and the $y_j$ separately, so by the fundamental theorem of symmetric polynomials \cite[Chapter 4, Section 6, Theorem 1]{b47}, there is a unique polynomial $\PP_{m,n}\in\Lambda$ such that
\begin{equation}\label{polyPq}
q_{m,n}(\mathbf{x},\mathbf{y})=\PP_{m,n}(\mc E_1(\mathbf {x}),\hdots,\mc E_m(\mathbf{x}),\mc E_1(\mathbf y),\hdots,\mc E_n(\mathbf y)).
\end{equation}

For example, we have  $ \PP_{m,0}({\bf x},{\bf y})= 1+ \sum_{i=1}^m  x_i$.

  \subsection{Characteristic Classes of External Tensor Products} % (ETPs) of representations}
  Let $G_1,G_2$ be finite groups, and let $p_i:G_1\times G_2\to G_i$ be the projection maps.  Given complex $G_i$-representations  $(\pi_i,V_i)$, we consider their external tensor product $(\pi_1\boxtimes\pi_2, V_1\otimes V_2)$.

\begin{prop}\label{tenswc}

Suppose $(\pi_1,V_1),$ $(\pi_2,V_2)$ have respective degrees $m,n$. Then,
$$c(\pi_1\boxtimes\pi_2)=\PP_{m,n}(p_1^*c_1(\pi_1),\hdots,p_1^*c_m(\pi_1),p_2^*c_1(\pi_2),\hdots,p_2^*c_n(\pi_2)),$$ 
where $\PP_{m,n}$ is as defined above. When $\pi_1,\pi_2$ are orthogonal, then
$$w(\pi_1\boxtimes\pi_2)=\PP_{m,n}(p_1^*w_1(\pi_1),\hdots,p_1^*w_m(\pi_1),p_2^*w_1(\pi_2),\hdots,p_2^*w_n(\pi_2)).$$ 
\end{prop}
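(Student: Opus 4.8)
The plan is to derive both formulas from the \emph{splitting principle} for vector bundles over classifying spaces: one reduces to the case where the two representations are sums of one‑dimensional representations, where the behaviour of $c_1$ (resp.\ $w_1$) under tensor product is elementary, computes there, and then transports the resulting identity from the universal setting to $B G_1\times B G_2$ by naturality of characteristic classes.

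For the Chern‑class assertion, note that $\pi_1\boxtimes\pi_2$ is classified by the composite of $f_1\times f_2\colon B G_1\times B G_2\to B\U(m)\times B\U(n)$, where $f_i$ classifies $\pi_i$, with the map $B\U(m)\times B\U(n)\to B\U(mn)$ classifying the external tensor product $\gamma_m\boxtimes\gamma_n$ of the two universal bundles. By functoriality of Chern classes it suffices to prove
\[
c(\gamma_m\boxtimes\gamma_n)=\PP_{m,n}\bigl(p_1^*c_1(\gamma_m),\ldots,p_1^*c_m(\gamma_m),p_2^*c_1(\gamma_n),\ldots,p_2^*c_n(\gamma_n)\bigr)
\]
in $\hc^*(B\U(m)\times B\U(n),\z)$ and then apply $(f_1\times f_2)^*$. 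To establish this universal identity I would pull back once more along $BT^m\times BT^n\to B\U(m)\times B\U(n)$, which is injective on integral cohomology with image the $S_m\times S_n$‑invariants. Over $BT^m\times BT^n$ the bundle $\gamma_m$ splits as $L_1\oplus\cdots\oplus L_m$ and $\gamma_n$ as $M_1\oplus\cdots\oplus M_n$, so $\gamma_m\boxtimes\gamma_n\cong\bigoplus_{i,j}(L_i\otimes M_j)$; using $c_1(L_i\otimes M_j)=c_1(L_i)+c_1(M_j)$ and the Whitney sum formula,
\[
c(\gamma_m\boxtimes\gamma_n)=\prod_{i=1}^m\prod_{j=1}^n\bigl(1+x_i+y_j\bigr)=q_{m,n}(\mathbf x,\mathbf y),
\]
where $x_i=c_1(L_i)$ and $y_j=c_1(M_j)$ are the Chern roots. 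Since $c_k(\gamma_m)$ restricts to $\mc E_k(\mathbf x)$ and $c_k(\gamma_n)$ to $\mc E_k(\mathbf y)$, the defining relation \eqref{polyPq} of $\PP_{m,n}$ identifies the right‑hand side with $\PP_{m,n}(\mc E_\bullet(\mathbf x),\mc E_\bullet(\mathbf y))$, and injectivity lets us descend to the universal formula.

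The orthogonal case runs identically with $\mb F_2$ coefficients. A choice of invariant symmetric bilinear form exhibits $\pi_i$ as the complexification of a real representation, classified by a map to $B\Or(m_i)$, and the tensor product of the two symmetric forms exhibits $\pi_1\boxtimes\pi_2$ as the complexification of a real representation classified through $B\Or(m)\times B\Or(n)\to B\Or(mn)$. Reducing mod $2$ and pulling back to $(BC_2)^m\times(BC_2)^n$, where the universal $\Or$‑bundles split into real line bundles $L_i,M_j$, the only new input needed is that tensoring real line bundles corresponds to addition in $\hc^1(-,\mb F_2)$, i.e.\ $w_1(L_i\otimes M_j)=w_1(L_i)+w_1(M_j)$. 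The Whitney formula then yields $\prod_{i,j}(1+w_1(L_i)+w_1(M_j))$, the mod $2$ reduction of $q_{m,n}$ evaluated at the Stiefel--Whitney roots; since $\PP_{m,n}$ has integer coefficients, reducing \eqref{polyPq} mod $2$ and using injectivity of $\hc^*(B\Or(k),\mb F_2)\to\hc^*((BC_2)^k,\mb F_2)$ to descend gives the stated formula for $w(\pi_1\boxtimes\pi_2)$.

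The main obstacle is foundational rather than computational: the splitting principle, the identifications $\hc^*(B\U(m),\z)\cong\hc^*(BT^m,\z)^{S_m}$ and $\hc^*(B\Or(m),\mb F_2)\cong\hc^*((BC_2)^m,\mb F_2)^{S_m}$, and the naturality of characteristic classes must be applied over the infinite‑dimensional classifying spaces, and one must check that the external tensor product of universal bundles is genuinely classified by a map between models of these spaces. Since $BG$ for a finite group may be realised as a colimit of finite‑dimensional smooth manifolds, these points reduce to the classical statements over compact manifolds; this is the content sketched in the remainder of the Appendix.
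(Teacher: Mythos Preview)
Your proof is correct and follows essentially the same route as the paper: both arguments reduce to the splitting principle and naturality of characteristic classes, with the only organizational difference being that the paper works with the associated bundles $EG_i[U_i]$ over $BG_i$ and cites Milnor (Problem 7-C) for the tensor-product formula, whereas you pass through the universal classifying spaces $B\U(m)$, $B\Or(m)$ and carry out the splitting-principle computation explicitly over the maximal torus.
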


We will sketch the proof for SWCs in this Appendix; the case of CCs is similar. 
Iterating Proposition \ref{tenswc} gives:

\begin{cor}\label{tengen}
Let $\Pi=\pi_1\boxtimes\cdots\boxtimes \pi_n$ be a (complex) representation of $G=G_1\times\cdots\times G_n$. Then $c_k(\Pi)$ is in the subalgebra of $\hc^*(G,\z)$ generated by $\{p_j^*c_i(\pi_j): 1\leq j\leq n, i\geq 0 \}$. When each $\pi_i$ is orthogonal, the class
 $w_k(\Pi)$ is in the subalgebra of $\hc^*(G)$ generated by $\{p_j^*w_i(\pi_j): 1\leq j\leq n, i\geq 0 \}$.
\end{cor}

\subsection{Products of Vector Bundles}

The results in the previous section are valid for vector bundles over paracompact spaces. Here, we will prove them in this more general setting.
All our base spaces are paracompact.

 Let $E_1$, $E_2$ be two real vector bundles over a base space $B$. Their internal tensor product $E_1\otimes E_2$  is again a real vector bundle over $B$. 
 
\begin{prop}[\cite{milnor}, Chapter 7, Problem 7-C]\label{tench}
Let $E_1$, $E_2$ be real vector bundles over $B$ with respective ranks $m,n$. Then,
$$w(E_1\otimes E_2)=\PP_{m,n}(w_1(E_1),\hdots,w_m(E_1),w_1(E_2),\hdots,w_n(E_2)).$$

\end{prop}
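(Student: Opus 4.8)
The plan is to reduce to the case of line bundles via the splitting principle, where the formula is immediate from the Whitney product formula, and then to descend using injectivity on mod $2$ cohomology.

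First I would invoke the splitting principle, valid over paracompact bases by iterated projectivization (first of $E_1$, then of the pullback of $E_2$): there is a paracompact space $B'$ and a map $f : B' \to B$ such that $f^* : \hc^*(B) \to \hc^*(B')$ is injective and both pullbacks split as direct sums of real line bundles, say $f^*E_1 \cong \xi_1 \oplus \cdots \oplus \xi_m$ and $f^*E_2 \cong \eta_1 \oplus \cdots \oplus \eta_n$. Writing $a_i = w_1(\xi_i)$ and $b_j = w_1(\eta_j)$, the Whitney product formula together with naturality gives $f^*w_k(E_1) = \mc E_k(a_1, \ldots, a_m)$ and $f^*w_k(E_2) = \mc E_k(b_1, \ldots, b_n)$ for all $k$.

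Next, since tensor products commute with pullback and distribute over direct sums, $f^*(E_1 \otimes E_2) \cong \bigoplus_{i,j}(\xi_i \otimes \eta_j)$, a direct sum of real line bundles. Real line bundles over $B'$ are classified by $\hc^1(B')$, with tensor product corresponding to addition of classifying maps, so $w_1(\xi_i \otimes \eta_j) = a_i + b_j$. The Whitney product formula then yields
\[
f^*w(E_1 \otimes E_2) = \prod_{i=1}^m \prod_{j=1}^n (1 + a_i + b_j) = q_{m,n}(a_1, \ldots, a_m, b_1, \ldots, b_n),
\]
with $q_{m,n}$ as in \eqref{qmn}, now reduced mod $2$. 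By the defining property \eqref{polyPq} of $\PP_{m,n}$, the right-hand side equals $\PP_{m,n}(\mc E_1(a), \ldots, \mc E_m(a), \mc E_1(b), \ldots, \mc E_n(b))$, which by the previous paragraph is $f^*\!\left(\PP_{m,n}(w_1(E_1), \ldots, w_m(E_1), w_1(E_2), \ldots, w_n(E_2))\right)$. Since $f^*$ is injective and $f^*w(E_1 \otimes E_2) = f^* w(E_1 \otimes E_2)$ by naturality of SWCs, the claimed identity follows.

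The only genuine obstacle is setting up the splitting principle and the line-bundle identity $w_1(\xi \otimes \eta) = w_1(\xi) + w_1(\eta)$ carefully in the stated generality (paracompact base, real bundles); both are standard. The rest is a formal manipulation of the Whitney product formula and the definition of $\PP_{m,n}$. For the Chern class version one runs the identical argument with complex line bundles, $c_1$ in place of $w_1$, and integer coefficients, using that complex line bundles are classified by $\hc^2(\,\cdot\,;\z)$ with tensor product giving addition.
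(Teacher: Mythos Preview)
Your argument is correct and is the standard proof via the splitting principle; the paper does not give its own proof but simply cites Milnor's \emph{Characteristic Classes}, Problem 7-C, which outlines exactly this approach (reduce to a sum of line bundles, use $w_1(\xi\otimes\eta)=w_1(\xi)+w_1(\eta)$, apply the Whitney formula, and descend by injectivity).
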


Let $E_1$, $E_2$ be real vector bundles over base spaces $B_1$, $B_2$ respectively with $\Pi_i:E_i\to B_i$. We can form their external tensor product $E_1\boxtimes E_2$ over $B_1\times B_2$ as follows.

Consider the projection maps $p_i: B_1\times B_2\to B_i$. Let $p_i^*E_i$ be the pullback of $E_i$ by $p_i$ consisting of elements $((b_1,b_2),e_i)\in (B_1\times B_2)\times E_i$ such that $\Pi_i(e_i)=p_i(b_1,b_2)=b_i$. These are vector bundles over the same base space $B_1\times B_2$. Thus, we construct their internal tensor product, and put $E_1\boxtimes E_2:=p_1^*E_1\otimes p_2^*E_2$, which is again a vector bundle over $B_1\times B_2.$
This with the naturality of SWCs and Proposition \ref{tench} gives:
\begin{prop}\label{tenswc1}
Let $E_1$, $E_2$ be real vector bundles over respective base spaces $B_1$, $B_2$ with $\rank(E_1)=m$ and  $\rank(E_2)=n$. Then,
$$w(E_1\boxtimes E_2)=\PP_{m,n}(p_1^*w_1(E_1),\hdots,p_1^*w_m(E_1),p_2^*w_1(E_2),\hdots,p_2^*w_n(E_2)).$$

\end{prop}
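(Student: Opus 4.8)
The plan is to reduce Proposition \ref{tenswc1} to the internal tensor product case (Proposition \ref{tench}) via naturality of Stiefel-Whitney classes. First I would recall the construction: $E_1 \boxtimes E_2 := p_1^*E_1 \otimes p_2^*E_2$ is the internal tensor product over $B_1 \times B_2$ of the two pullback bundles, where $\rank(p_1^*E_1) = \rank(E_1) = m$ and $\rank(p_2^*E_2) = \rank(E_2) = n$ since pullback preserves rank. So Proposition \ref{tench} applies directly to this internal tensor product, giving
$$w(E_1 \boxtimes E_2) = \PP_{m,n}\bigl(w_1(p_1^*E_1), \ldots, w_m(p_1^*E_1), w_1(p_2^*E_2), \ldots, w_n(p_2^*E_2)\bigr).$$

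The second step is to rewrite the arguments. By naturality of Stiefel-Whitney classes under the pullback maps $p_i$, we have $w_j(p_i^*E_i) = p_i^*w_j(E_i)$ for all $j$. Substituting these identities into the expression above — and using that $\PP_{m,n}$ is a polynomial with integer (hence, after reduction, $\mathbb F_2$) coefficients, so it commutes with the ring homomorphism structure — yields exactly
$$w(E_1 \boxtimes E_2) = \PP_{m,n}\bigl(p_1^*w_1(E_1), \ldots, p_1^*w_m(E_1), p_2^*w_1(E_2), \ldots, p_2^*w_n(E_2)\bigr),$$
which is the desired formula. That completes the argument.

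I do not anticipate a genuine obstacle here, since every ingredient is in place: Proposition \ref{tench} is cited, and naturality of SWCs is standard. The only point requiring a word of care is that $\PP_{m,n}$ is a \emph{universal} polynomial whose substitution into cohomology classes is legitimate term by term — this is precisely where one uses that $\PP_{m,n}$ is defined abstractly in $\La = \z[x_1,\ldots,x_m,y_1,\ldots,y_n]$ via \eqref{polyPq} and then evaluated at the relevant classes, so the equation transports along any ring homomorphism, in particular along $p_i^*$ applied coordinatewise. If one wishes to be fully careful about paracompactness hypotheses, one should note that $B_1 \times B_2$ is paracompact when $B_1, B_2$ are (this holds, e.g., when one factor is metrizable, or in the CW setting relevant to classifying spaces), so that the splitting-principle machinery underlying Proposition \ref{tench} is available over the product base.
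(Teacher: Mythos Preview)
Your proof is correct and follows exactly the approach the paper indicates: apply Proposition \ref{tench} to the internal tensor product $p_1^*E_1 \otimes p_2^*E_2$ over $B_1 \times B_2$, then invoke naturality of Stiefel-Whitney classes to rewrite $w_j(p_i^*E_i)$ as $p_i^*w_j(E_i)$. The paper states this in a single line (``This with the naturality of SWCs and Proposition \ref{tench} gives''), and your write-up simply makes those two steps explicit.
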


\subsection{Vector Bundles From Representations}\label{ETPs}

Let $G$ be a finite group, and $(\rho, U)$ be a real representation of $G$. Associated to $G$ is a classifying space $BG$ with a contractible right principal  $G$-bundle $EG$. From $(\rho,U)$ one can form the associated real vector bundle $EG[U]$ over $BG$. To define $EG[U]$, first form the product $EG \times U$; it is $G$-space under the action $g \cdot (x,u)=(x \cdot g^{-1}, \rho(g)u)$. Then $EG[U]$ is the quotient of $EG \times U$ by this action. Then put
$w_i^\R(\rho)=w_i(EG[U])$.
(See for instance \cite{Benson} or  \cite{GKT}.)  The singular cohomology $\hc^*(BG, A)$ is naturally isomorphic to the group cohomology $\hc^*(G, A)$ for any abelian group  $A$.

 Let $(\pi,V)$ be a complex orthogonal representation of $G$. There is a representation $(\rho, U)$, with $U$ a real vector space, so that $\rho \otimes_{\rr} \cc \cong \pi$ (see \cite[Chapter II, Section 6]{BrokerDieck} for instance). Such a representation is called a \emph{real form} of $\pi$; it is unique up to isomorphism. We now can define $w(\pi):=w^\R(\rho)$.

Let $G_1,G_2$ be finite groups with orthogonal complex representations  $(\pi_1,V_1)$ and $(\pi_2,V_2)$. Let $G=G_1 \times G_2$. We can form their external tensor product $(\pi_1\boxtimes\pi_2, V_1\otimes V_2)$, an orthogonal representation of $G$. Let $(\rho_i,U_i)$ be real forms of $\pi_i$ for $i=1,2$. Then $\rho_1 \boxtimes \rho_2$ is a real form of $\pi_1 \boxtimes \pi_2$. From above, one has 
\beq
\begin{split}
w(\pi_1\boxtimes\pi_2) &= w^{\R}(\rho_1 \boxtimes \rho_2) \\
					&= w(EG[U_1 \otimes U_2]) \\
				&= w(\mc U_1\boxtimes\mc U_2) \in \hc^*(BG_1 \times BG_2,\mathbb F_2), \\
				\end{split}
\eeq

 where
$\mc U_i=EG_i[U_i]$. 
We may identify $BG_1 \times BG_2$ with $BG$. Now, Proposition \ref{tenswc} for SWCs   follows from Proposition \ref{tenswc1} with $E_i=\mc U_i$.

 \bigskip

\subsection*{Acknowledgments.}
 Part of this paper comes out of the first author’s Ph.D. thesis \cite{Malik.thesis} at IISER Pune, during which she was supported by a Ph.D. fellowship from the Council of Scientific and Industrial Research, India. Both authors thank the Chennai Mathematical Institute for its hospitality during many visits.
\bibliographystyle{abbrv}
\bibliography{mybib}
\vspace{10mm}

\end{document}